\documentclass[preprint,1p]{elsarticle}

\usepackage{amssymb,amsmath,amsthm}
\usepackage[numbers]{natbib}
\usepackage{geometry}
\usepackage{fleqn}
\usepackage{graphicx}
\usepackage{txfonts}
\usepackage{hyperref}
\usepackage{enumitem}
\usepackage{adjustbox}
\usepackage{changepage}
\usepackage[toc,page]{appendix}
\usepackage{subcaption}
\usepackage{multirow}
\usepackage{float}

\newtheorem{theorem}{Theorem}
\newtheorem{lemma}{Lemma}

\usepackage{xspace}
\usepackage{Macros/mydef}
\usepackage{Macros/remarks}

\usepackage[dvipsnames]{xcolor}
\newcommand{\cblue}[1]{{\color{black}#1}}
\newcommand{\cred}[1]{{\color{black}#1}}

\newcommand{\cpurple}[1]{{\color{black}#1}}


\begin{document}

\begin{frontmatter}
\title{Monolithic parabolic regularization of the MHD equations and entropy principles\tnoteref{t1}}
\tnotetext[t1]{This research is funded by Swedish Research Council (VR) under grant number 2021-04620.}
\author[1]{Tuan Anh Dao\corref{cor1}}
\ead{tuananh.dao@it.uu.se}
\author[1]{Murtazo Nazarov}
\ead{murtazo.nazarov@it.uu.se}
\cortext[cor1]{Corresponding author}
\address[1]{Department of Information Technology, Uppsala University, Sweden}

\begin{abstract}
We show at the PDE level that the monolithic parabolic regularization of the equations of ideal magnetohydrodynamics (MHD) is compatible with all the generalized entropies, fulfills the minimum entropy principle, and preserves the positivity of density and internal energy. We then numerically investigate this regularization for the MHD equations using continuous finite elements in space and explicit strong stability preserving Runge-Kuta methods in time. The artificial viscosity coefficient of the regularization term is constructed to be proportional to the entropy residual of MHD. It is shown that the method has a high order of accuracy for smooth problems and captures strong shocks and discontinuities accurately for non-smooth problems.
\end{abstract}

\begin{keyword}
MHD \sep artificial viscosity \sep entropy inequalities \sep viscous regularization \sep entropy viscosity
\end{keyword}

\end{frontmatter}

\section{Introduction}\label{Sec:Introduction}
Designing numerical methods which produce physically relevant solutions has been an interesting yet challenging task. In the study of non-conducting fluids, for example such described by the compressible Euler equations, entropy principles, and positivity preserving properties have been studied for many schemes. Those include Godunov scheme \citep{Godunov_1959}, Lax scheme \citep{Lax_1954}, and its variants, see e.g., \citep{Perthame_1996,Tang_2000,Tadmor_1986}. Positivity preserving properties often refer to the fact that density, internal energy, or pressure of the solution should maintain positive as it evolves in time.
\cpurple{ These properties are also required when solving the MHD equations. The ideal MHD equations describe the coupling of hydrodynamic motion of plasma fluid with very little resistivity and its electromagnetic field.
}
Existing works on the compressible Euler equations are difficult to extend to the MHD system. Several reasons are: ($i$) the additional consideration of electromagnetism leads to complex hyperbolicity of the MHD system; ($ii$) the MHD flux is non-convex; ($iii$) the solenoidal nature of the magnetic field is numerically important to maintain but it is not explicitly described by the MHD equations. A common approach to achieving positivity and entropy principles for hyperbolic systems consists of adding a vanishing viscous regularization, see e.g., \citep{Guermond_Popov_2014, Lax_1971}. The viscous regularization can be chosen such that the resulting system matches the resistive model of the MHD equations, see e.g., \citep{Bohm_2018,Dao_2021}, or simply chosen as Laplacian terms under equal weights to all the conserved quantities, here we refer to as the ``\textit{monolithic parabolic regularization}''. The resistive MHD viscous flux suffers from several drawbacks for numerical purposes. One difficulty is that there is no regularization to the mass equation, which makes it incompatible with most numerical methods due to the Gibbs phenomenon. Another problem is that unless the thermal diffusivity is zero, the resistive MHD flux violates the minimum entropy principle, see \citep{Guermond_Popov_2014}. On the other hand, the non-physically-motivated monolithic viscous flux is employed in numerical schemes for MHD, e.g., \citep{Kuzmin_2020,Mabuza_2020} as well as being a continuous analog of the well-known Lax-Friedrichs or upwind schemes. However, to the best of our knowledge, investigations regarding entropy principles and other positivity-preserving properties of viscous regularizations to the MHD equations are still missing in the literature.

\cpurple{
  The main focus and contribution of this paper is to investigate the entropy principles of the monolithic parabolic regularization to the ideal MHD equations at the PDE level. We prove that the conservative form of the MHD equations with monolithic parabolic regularization satisfies positivity of density, minimum entropy principle, positivity of internal energy, and is compatible with all the generalized entropy inequalities in the manner of \cite{Harten_1998}. Our analysis is an extension of the works on compressible Euler equations of \cite{Guermond_Popov_2014} and \cite{Harten_1998}. The theory encourages the use of the monolithic viscous flux as a stabilization tool for the numerical approximation of the ideal MHD equations. 
}

\cpurple{
  State-of-the-art finite element methods for solving convection-dominated problems, such as the compressible Euler the MHD equations, are mainly based on the least-squares argument, see \eg \citep{Hughes_et_al_2010} and references therein. A secondary contribution of this article is the use of the monolithic parabolic regularization developed in this work as a finite element stabilization term without invoking any least-squares argument.
  First, in Section~\ref{sec:ns-visc} we validate the developed theory using a first order artificial viscosity and compare the monolithic viscous flux with the traditional Navier-Stokes resistive viscous flux.
  The numerical results reveal several benefits of the monolithic flux over the resistive MHD flux, which fit well with the theoretical findings.
  
  Then, we introduce the entropy viscosity method for MHD in the spirit of \citep{Guermond_2008, Guermond_2011, Nazarov_Larcher_2017}, where the regularization coefficients are constructed to be proportional to the entropy residual of the MHD system. Although the magnetic field was divergenceless in the continuous case, this is not the case in the discrete approximation. It is necessary to apply some divergence cleaning algorithms. In this article, we use the projection method \citep{Brackbill_Barnes_1980} to fix the divergence error. The tests show that the resulting method can maintain high-order accuracy of the smooth solutions, capture the shocks accurately, and remain stable even in the turbulence phase of the numerical solution. }

The rest of the paper is organized as follows. In Section \ref{Sec:Equation}, we describe the ideal MHD equations. Section \ref{Sec:monolithic} contains the analysis of the monolithic parabolic regularization at the PDE level. In Section \ref{Sec:numerical_investigation}, we relate the theoretical results in the continuous level with numerical results using a CG discretization. Section \ref{Sec:conclusion} contains a summary of our contributions and concluding remarks.

\section{The ideal MHD equations}\label{Sec:Equation}
Consider a $d$-dimensional spatial domain $\bx\in\mR^d$, $d\in\polN^*$, and a temporal domain $t\in[0,+\infty)$. We define the space time domain $D:=\mR^d\times[0,+\infty)$. Let us denote $\bU := (\rho,\bbm,E,\bB)$ a vector containing several conserved quantities of a conducting fluid: density $\rho(\bx,t):D\to\mR$, momentum $\bbm(\bx,t):D\to\mR^d$, total energy $E(\bx,t):D\to\mR$, and magnetic field $\bB(\bx,t):D\to\mR^d$. The velocity field $\bu(\bx,t)$ is determined by the relation $\bu(\bx,t):=\bbm(\bx,t)/\rho(\bx,t)$. The governing electrodynamics and fluid mechanics of such fluid can be described in the following ideal MHD equations,
\begin{equation}\label{eq:mhd1}
\p_t \bsfU + \DIV \bsfF_{\calE}(\bsfU) + \DIV \bsfF_{\calB}(\bsfU) = 
0,
\end{equation}
where the nonlinear tensor fluxes $\bsfF_{\calE}(\bsfU)$ and $\bsfF_{\calB}(\bsfU)$ are defined as
\begin{equation*}
	\bsfF_{\calE}(\bsfU):=
	\begin{pmatrix}
	\bbm \\
	\bbm \otimes \bu + p\polI \\
	\bu (E+p) \\
	0
	\end{pmatrix},
	\quad
	\bsfF_{\calB}(\bsfU):=
	\begin{pmatrix}
	0 \\
	-\bbetaa\\
	-\bbetaa\SCAL\bu\\
	\bu \otimes \bB - \bB \otimes \bu \\
    \end{pmatrix},
\end{equation*}
$\polI$ denotes the identity matrix of size $d\times d$, and $p$ is the thermodynamic pressure. The symmetric term $\bbetaa$ is called the Maxwell stress tensor:
\[
\bbetaa := -\frac{1}{2} (\bB\SCAL\bB)\polI+\bB\otimes\bB.
\]
In the absence of the magnetic field or the conductivity of the fluid $\DIV\bsfF_{\calB}(\bsfU)\equiv 0$, the MHD equations \eqref{eq:mhd1} reduce to the compressible Euler equations. Thus, in certain circumstances, it is useful to extend the knowledge of the Euler equations to study the MHD equations. The solenoidal nature of the magnetic field $\bB$ is implied from Faraday's law,
\begin{equation}\label{eq:div0}
\DIV\bB = 0,
\end{equation}
which is also known as the ``divergence free'' constraint. In the rest of the paper, we use the fact that \eqref{eq:div0} holds in the continuous settings.
The specific internal energy $e$ is defined as
\begin{equation}\label{eq:def_internal_energy}
\rho e := E - \frac{1}{2} \rho \bu^2 - \frac{1}{2} \bB^2,
\end{equation}
where the term $\frac{1}{2} \bB^2$ is often referred to as the magnetic pressure.

The specific entropy $s(\rho, e)$ and the thermodynamic pressure $p$ are defined through the following thermodynamic identity, see \cite[Section 12.2.4]{Somov_2012},
\begin{equation}\label{eq:thermodynamic_identity}
\ud e := T \ud s + \frac{p}{\rho^2}\ud\rho.
\end{equation}
Denote $s_e:=\frac{\p s}{\p e}$ and $s_{\rho} := \frac{\p s}{\p \rho}$. The following simple chain rule will be frequently used,
\begin{equation}\label{eq:s_chain_rule}
\p_{\alpha}s=s_e\p_{\alpha}e+s_{\rho}\p_{\alpha}\rho,
\end{equation}
where $\alpha=\{\bx,t\}$.
The equation \eqref{eq:thermodynamic_identity} can be written as
\begin{equation}\label{eq:thermodynamic_identity2}
\ud s = T^{-1}\ud e - p T^{-1}\rho^{-2}\ud \rho.
\end{equation}
Combining \eqref{eq:thermodynamic_identity2} with the following direct consequence of \eqref{eq:s_chain_rule},
\begin{equation*}
\ud s = \frac{\p s}{\p e} \ud e + \frac{\p s}{\p\rho}\ud\rho,
\end{equation*}
we deduce that
\begin{equation*}
s_e := T^{-1},\quad s_{\rho} := -p T^{-1}\rho^{-2},
\end{equation*}
with the second equality being called the ``equation of state'' and being equivalent to
\begin{equation}\label{eq:equation_of_state}
p s_e + \rho^2 s_{\rho} = 0.
\end{equation}
Throughout this paper, we have the following assumptions: the temperature $T$ is positive, which is equivalent to
\[
s_e > 0,
\]
and the specific entropy $-s$ is strictly convex with respect to $\rho^{-1}$ and $e$. Note that similar to \citep{Guermond_Popov_2014}, we do not make the assumption on the sign of the pressure $p$, as it is allowed to be both positive and negative. The strict convexity of $-s$ implies that, see Appendix \ref{appendix:convexity_implication},
\begin{equation}\label{eq:convex_entropy_assumption}
\p_{\rho} (\rho^2s_{\rho}) < 0, \quad s_{ee} < 0, \quad \p_{\rho}(\rho^2s_{\rho})s_{ee} - \rho^2s_{\rho e}^2 > 0.
\end{equation}

For ideal gases, it is common to use
\begin{equation}\label{eq:ideal_pressure}
p = (\gamma-1) \rho e = \rho T
\end{equation}
to compute pressure and temperature from the conserved variables, where $\gamma > 1$ is the adiabatic gas constant and
\begin{equation}\label{eq:s}
s = c_v\ln\frac{p}{\rho^{\gamma}},
\end{equation}
where $c_v$ is the specific heat capacity at constant volume. We note that the main results of this work, which are derived in Section~\ref{Sec:monolithic}, do not rely on assuming that the gas is ideal.

\section{Monolithic parabolic regularization}\label{Sec:monolithic}
The ideal MHD equations can be regularized with monolithic parabolic terms as in the conservative form below,
\begin{equation}\label{eq:mhd_monolithic}
\p_t \bsfU + \DIV \bsfF_{\calE}(\bsfU) + \DIV \bsfF_{\calB}(\bsfU) - \DIV \bsfF_{\calV}^m(\bsfU)  = 
0,
\end{equation}
where
\begin{equation}\label{eq:monolithic_flux}
\bsfF_{\calV}^m(\bsfU):=
\begin{pmatrix}
\epsilon \nabla\rho \\
\epsilon \nabla\bbm \\
\epsilon \nabla E\\
\epsilon \nabla \bB
\end{pmatrix},
\end{equation}
and $\epsilon$ is a small positive constant. The viscous flux \eqref{eq:monolithic_flux} is called ``monolithic'' because it simply adds the Laplacian terms corresponding to the conserved variables under an equal weight $\epsilon$. 
For reference, we separately write the equations in \eqref{eq:mhd_monolithic} as
\begin{align}
\label{eq:con_rho}  \p_t\rho & + \DIV \bbm && && = \epsilon \LAP\rho \\
\label{eq:con_m}  \p_t\bbm & + \DIV (\bbm \otimes \bu + p \polI ) && - \DIV \bbetaa && = \epsilon \LAP\bbm \\
\label{eq:con_E}  \p_t E & + \DIV (\bu(E+p)) && - \DIV (\bu \SCAL \bbetaa) && = \epsilon \LAP E \\
\label{eq:con_B}  \p_t \bB & && + \DIV (\bu \otimes \bB - \bB \otimes \bu) && = \epsilon \LAP \bB
\end{align}
For numerical purposes, the constant $\epsilon$ is often dependent on the resolution scale of the underlying scheme. The monolithic parabolic viscous terms can be linked to many classical numerical schemes with attractive properties. For example, on a uniform grid in one spatial dimension $\{\dots,x_{i-1},x_i,x_{i+1},\dots\}$, the well-known Lax-Friedrichs method applied on the unregularized mass conservation equation
\begin{equation}\label{eq:con_rho_unregularized}
\p_t\rho + \DIV \bbm = 0
\end{equation}
reads
\begin{equation}\label{eq:lax_friedrichs}
\rho_i^{n+1}=\frac{1}{2}(\rho_{i-1}^{n}+\rho_{i+1}^{n})-\frac{\delta t}{2\delta x}\left(m_{i+1}^n-m_{i-1}^n\right),
\end{equation}
where $\rho_i^n$ approximates $\rho(x_{i},t_{n})$, $m_i^n$ approximates $\bbm(x_{i},t_{n})$, $x_i \in \mR$, $t_{n+1} > t_n$, $\delta x = x_{i+1}-x_i > 0$, and $\delta t >0$ is the time step. Assuming sufficient smoothness, it is known that \eqref{eq:lax_friedrichs} is stable upon choosing $\delta t = C_{LF}\delta x$, where $C_{LF}$ is a sufficiently small real number. By adding and subtracting $\rho_i^{n}$ from the right-hand-side of \eqref{eq:lax_friedrichs}, we can rewrite \eqref{eq:lax_friedrichs} as
\begin{equation}\label{eq:lax_friedrichs_rewritten}
\rho_i^{n+1}=\rho_i^{n}-\frac{\delta t}{2\delta x}\left(m_{i+1}^n-m_{i-1}^n\right)+\epsilon_{LF}\frac{\delta t}{\delta x^2}(\rho_{i+1}^{n}-2\rho_i^{n}+\rho_{i-1}^{n}),
\end{equation}
where $\epsilon_{LF}=\frac{\delta x^2}{2\delta t}$. One can see that \eqref{eq:lax_friedrichs_rewritten} is a consistent discretization of \eqref{eq:con_rho} when $\epsilon$ is set to $\epsilon_{LF}$. If $\delta t$ is chosen to be of order $\calO(\delta x)$ following the stability condition, the viscosity coefficient $\epsilon_{LF}$ is of order $\calO(\delta x)$ and vanishes as $\delta x\to\infty$.

Upwind schemes can also be viewed as directly related to \eqref{eq:con_rho}. A regular upwind scheme approximating the nonlinear equation \eqref{eq:con_rho_unregularized} is often known as,
\begin{equation}\label{eq:upwind}
\rho_i^{n+1}=\rho_i^{n}-\frac{\delta t}{2\delta x}\left(m_{i+1}^n-m_{i-1}^n\right)+\epsilon_{Up}\frac{\delta t}{\delta x^2}(\rho_{i+1}^{n}-2\rho_i^{n}+\rho_{i-1}^{n}),
\end{equation}
where the vanishing viscosity coefficient reads $\epsilon_{Up} = \frac{1}{2}\delta x\|\bu\|_{L^{\infty}(\mR^d)}$ with $\|\bu\|_{L^{\infty}(\mR^d)}$ being the maximum density wave speed. Again, it can be seen that \eqref{eq:upwind} is a consistent discretization of \eqref{eq:con_rho} when $\epsilon$ is set to $\epsilon_{Up}$.

\subsection{Positivity of density}
For positivity of density, one can apply the same proof as \cite[Section 3.1]{Guermond_Popov_2014} did for the Euler equations since no equations other than the scalar mass conservation equation \eqref{eq:con_rho} are invoked. We conclude that the density to solution to \eqref{eq:con_rho} is positive given that the assumptions in the following theorem hold and the viscous parameter $\epsilon$ in \eqref{eq:con_rho} is positive.

\begin{theorem}[Positivity of density, \cite{Guermond_Popov_2014}]\label{theorem:positivity_density}
Upon the following justifiable assumptions:
\begin{enumerate}[label=(\roman*)]
\item $\bu$ and $\DIV\bu\in L^1(D)$;
\item $\int_0^{+\infty}(\p_t\rho+\DIV\bbm)\ud t$ and $\int_0^{+\infty}\epsilon\nabla\rho\ud t\in L^1(\mR^d)$;
\item there exists $\rho_{\min} > 0$ such that outside of a closed ball in $\mR^d$ centered at the origin with radius $R_{\widehat t}<\infty$, $\rho(\bx,t) \geq \rho_{\min}$ for all $t \in (0,\widehat t\,),\,\widehat t > 0$, and $\int_0^{\widehat t}(\DIV\bu(\bx,\widehat t\,))^+\ud t\in L^\infty(\mR^d)$,
\end{enumerate}
the density solution to \eqref{eq:con_rho} satisfies the following positivity property
  \[
\operatorname{essinf}_{\bx \in \Real^{d}} \rho(\bx, t) > 0, \quad \forall t>0.
  \]
\end{theorem}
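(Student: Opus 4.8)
The plan is to reduce \eqref{eq:con_rho} to a scalar advection--diffusion equation for $\rho$ and then invoke a parabolic minimum principle, exactly as in \cite[Section 3.1]{Guermond_Popov_2014}. First I would use the relation $\bbm=\rho\bu$ to rewrite the conservative mass equation \eqref{eq:con_rho} in non-conservative form,
\begin{equation*}
\p_t\rho + \bu\SCAL\nabla\rho + \rho\,\DIV\bu = \epsilon\LAP\rho,
\end{equation*}
which is a linear parabolic equation for $\rho$ with transport velocity $\bu$ and a zeroth-order reaction coefficient $\DIV\bu$. Assumption (i) guarantees that these coefficients are integrable, so that the equation is meaningful and the comparison argument below is justified.

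The heart of the argument is a minimum principle. Let $\rho_{\min}(t):=\operatorname{essinf}_{\bx}\rho(\bx,t)$. At a point where the spatial minimum is attained one has $\nabla\rho=0$ and $\LAP\rho\geq 0$, so the transport term drops out and the diffusion term has a favorable sign; formally this yields the differential inequality
\begin{equation*}
\frac{\ud}{\ud t}\rho_{\min}(t) \geq -(\DIV\bu)\,\rho_{\min}(t) \geq -\big\|(\DIV\bu)^+\big\|_{L^\infty(\mR^d)}\,\rho_{\min}(t),
\end{equation*}
valid as long as $\rho_{\min}(t)>0$. Integrating with Gronwall's lemma gives the exponential lower bound
\begin{equation*}
\rho_{\min}(t)\geq\rho_{\min}(0)\,\exp\!\left(-\int_0^{t}\big\|(\DIV\bu(\cdot,s))^+\big\|_{L^\infty(\mR^d)}\,\ud s\right),
\end{equation*}
which is strictly positive whenever $\rho_{\min}(0)>0$ and the integral in the exponent is finite; the latter is precisely what assumption (iii) supplies on the interval $(0,\widehat t\,)$.

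The main obstacle is making this rigorous on the unbounded domain $\mR^d$: the essential infimum need not be attained, $\rho_{\min}(t)$ need not be differentiable, and the minimizing location could escape to infinity. This is exactly why assumption (iii) forces $\rho\geq\rho_{\min}>0$ outside a ball of finite radius $R_{\widehat t}$, confining the relevant minimum to a compact set on which a genuine interior minimizer exists and the pointwise sign conditions on $\nabla\rho$ and $\LAP\rho$ may be used. Assumption (ii) then provides the integrability of the convective flux and of the viscous term $\epsilon\nabla\rho$ needed to legitimize these manipulations in the weak sense. I would make the formal Gronwall step rigorous either by testing against regularized negative-part functionals such as $(\rho-\delta)^-$ and letting $\delta\to 0$, or by directly invoking the parabolic comparison principle established in \cite{Guermond_Popov_2014}; since no equation other than the scalar \eqref{eq:con_rho} enters, and the Maxwell-stress and magnetic terms never touch the mass balance, their proof transfers verbatim to the MHD setting.
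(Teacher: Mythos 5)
Your proposal is correct and follows essentially the same route as the paper, which gives no independent argument but simply observes that the regularized mass equation is identical to the Euler case and defers to \cite[Section 3.1]{Guermond_Popov_2014}; your sketch is a faithful reconstruction of that cited minimum-principle-plus-Gronwall argument, including the role of assumptions (i)--(iii) in confining the minimizer to a compact set and legitimizing the formal differential inequality. Nothing further is needed.
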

In simple words, Theorem \ref{theorem:positivity_density} implies that under sufficient smoothness of the density and the velocity, and that outside of a region of interest, then the parabolic term $\epsilon\LAP\rho$, for an arbitrary positive value of $\epsilon$, regularizes \eqref{eq:con_rho} and guarantees positivity of the density solution.

\subsection{Minimum entropy principle}
In this section, we investigate the continuous minimum entropy principle of the monolithic flux. We write \eqref{eq:con_rho}-\eqref{eq:con_B} in the following nonconservative form,
\vspace{-0.5cm}
\begin{adjustwidth}{0cm}{-0.5cm}
\begin{align}
\label{eq:noncon_rho} & \p_t\rho + \bu\SCAL\nabla\rho +\rho \DIV \bu && && = \epsilon \LAP\rho,\\
\label{eq:noncon_m} & \rho (\p_t \bu + \bu\SCAL\nabla \bu) + \bu \epsilon\LAP\rho + \nabla p && - \DIV \bbetaa && = \epsilon \LAP\bbm,\\
\label{eq:noncon_E} & \rho(\p_t \calE + \bu\SCAL\nabla\calE) + \calE\epsilon\LAP\rho + \DIV(\bu p) && - \DIV (\bu \SCAL \bbetaa) && = \epsilon \LAP E,\\
\label{eq:noncon_B} & \p_t \bB && + \bB(\DIV\bu)+(\bu\SCAL\nabla)\bB-(\bB\SCAL\nabla)\bu && = \epsilon \LAP \bB,
\end{align}
\end{adjustwidth}
where $\calE = \rho^{-1}E$.

For convenience, we use the notation $\bl := \epsilon\nabla (\rho e)$ which will appear frequently. The term $\bl$ can be written in another form described by the following lemma.
\begin{lemma}
The following equality holds
\begin{equation}\label{eq:relation_rho_e_se}
\bl=s_{e}^{-1}\left(e s_{e}-\rho s_{\rho}\right)(\epsilon\nabla\rho)+\epsilon\rho s_{e}^{-1} \nabla s.
\end{equation}
\end{lemma}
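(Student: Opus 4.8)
The plan is to expand $\bl$ via the product rule and then trade $\nabla e$ for $\nabla s$ using the thermodynamic chain rule. First I would write
\[
\bl = \epsilon\nabla(\rho e) = \epsilon e\,\nabla\rho + \epsilon\rho\,\nabla e,
\]
which separates the two natural contributions. Since the target identity \eqref{eq:relation_rho_e_se} is phrased in terms of $\nabla s$ rather than $\nabla e$, the key move is to invert the chain rule for the specific entropy.

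Applying \eqref{eq:s_chain_rule} with $\alpha=\bx$ gives $\nabla s = s_e\nabla e + s_\rho\nabla\rho$. The standing assumption $s_e>0$ (positive temperature) makes $s_e$ invertible, so I can solve
\[
\nabla e = s_e^{-1}\nabla s - s_e^{-1}s_\rho\,\nabla\rho.
\]
Substituting this into the expression for $\bl$ and grouping the terms proportional to $\epsilon\nabla\rho$ produces
\[
\bl = \bigl(e - \rho s_e^{-1}s_\rho\bigr)(\epsilon\nabla\rho) + \epsilon\rho s_e^{-1}\nabla s.
\]
Factoring $s_e^{-1}$ out of the first coefficient, i.e. writing $e-\rho s_e^{-1}s_\rho = s_e^{-1}(e s_e-\rho s_\rho)$, recovers \eqref{eq:relation_rho_e_se} verbatim.

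This is a purely algebraic manipulation, so I do not anticipate a genuine obstacle; the single point meriting attention is the division by $s_e$, which is justified precisely by the positivity-of-temperature hypothesis. I would note in passing that the coefficient $e s_e-\rho s_\rho$ could be simplified further via the equation of state \eqref{eq:equation_of_state}, but since the lemma is stated without assuming an ideal gas I would leave it in the displayed form.
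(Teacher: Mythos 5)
Your proof is correct and is essentially the same argument as the paper's: both rely solely on the chain rule $\nabla s = s_\rho\nabla\rho + s_e\nabla e$ and the invertibility of $s_e$, differing only in whether one multiplies $\nabla(\rho e)$ by $s_e$ first and divides at the end (the paper) or solves for $\nabla e$ and substitutes directly (your version). No gap; the division by $s_e$ is indeed justified by the positive-temperature assumption exactly as you note.
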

\begin{proof}
Using the chain rule \eqref{eq:s_chain_rule}, we obtain $\nabla s = s_{\rho}\nabla\rho + s_e\nabla e$. We then have
\begin{align*}
s_e\nabla(\rho e) & = e s_e \nabla\rho + \rho s_e\nabla e \\
& = e s_e \nabla\rho + (\rho\nabla s - \rho s_{\rho}\nabla\rho) \\
& = (e s_e-\rho s_{\rho})\nabla\rho+\rho\nabla s.
\end{align*}
Since $\epsilon s_e^{-1} > 0$, multiplying both sides of the above equality with $\epsilon s_e^{-1}$ gives the desired conclusion.
\end{proof}
We introduce the following matrices,
\[
\quad \polG_{\bu} := \epsilon\rho\nabla\bu = \epsilon\nabla\bbm - (\epsilon\nabla\rho)\otimes\bu, \quad \polG_{\bB} := \epsilon\nabla\bB.
\]
An equation governing the specific entropy is derived in the following lemma.
\begin{lemma}\label{lemma:entropy_equation}
The following identity holds for any specific entropy $s(\rho, e)$,
\[
\rho (\p_t s + \bu \SCAL \nabla s) - \DIV (\rho \epsilon \nabla s) - (\epsilon\nabla\rho)\SCAL\nabla(es_e-\rho s_{\rho}) + \bl\SCAL\nabla s_e-s_e\polG_{\bu}:\nabla\bu - s_e\polG_{\bB}:\nabla\bB = 0.
\]
\end{lemma}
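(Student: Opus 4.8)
The plan is to build the entropy equation from the chain rule \eqref{eq:s_chain_rule}, which along the advective direction reads $\rho(\p_t s + \bu\SCAL\nabla s) = \rho s_e(\p_t e + \bu\SCAL\nabla e) + \rho s_\rho(\p_t\rho + \bu\SCAL\nabla\rho)$, and then to substitute the evolution equations for $\rho$ and $e$. The density material derivative is read off directly from the nonconservative mass equation \eqref{eq:noncon_rho}, giving $\rho(\p_t\rho + \bu\SCAL\nabla\rho) = -\rho^2\DIV\bu + \rho\epsilon\LAP\rho$. The only substantive preliminary is an evolution equation for the specific internal energy $e$.

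First I would derive that internal energy equation. Using $\rho e = E - \tfrac12\rho\bu^2 - \tfrac12\bB^2$ from \eqref{eq:def_internal_energy}, I express $\rho(\p_t e + \bu\SCAL\nabla e)$ as the total-energy material derivative supplied by \eqref{eq:noncon_E}, minus the kinetic-energy balance obtained by taking the scalar product of \eqref{eq:noncon_m} with $\bu$, minus the magnetic-energy balance obtained by taking the scalar product of \eqref{eq:noncon_B} with $\bB$, using \eqref{eq:noncon_rho} to handle the advection of $\rho$ in the magnetic-pressure term. The MHD-specific cancellation is that the Maxwell-stress work $\DIV(\bu\SCAL\bbetaa) - \bu\SCAL\DIV\bbetaa = \bbetaa:\nabla\bu$ (by symmetry of $\bbetaa$), which evaluates to $-\tfrac12\bB^2\DIV\bu + \bB\SCAL(\bB\SCAL\nabla)\bu$, cancels exactly the magnetic-energy contribution $\tfrac12\bB^2\DIV\bu - \bB\SCAL(\bB\SCAL\nabla)\bu$. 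After these cancellations the inviscid part collapses to the pressure work $-p\DIV\bu$, and the viscous part reduces to $\epsilon\LAP E + (\tfrac12\bu^2 - e)\epsilon\LAP\rho - \bu\SCAL\epsilon\LAP\bbm - \bB\SCAL\epsilon\LAP\bB$.

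Substituting into the chain-rule expression, the inviscid contributions $-s_e p\DIV\bu - \rho^2 s_\rho\DIV\bu$ vanish identically by the equation of state \eqref{eq:equation_of_state}; this is the mechanism that leaves the entropy equation without an inviscid source. It then remains to recast the viscous terms. I would split $\epsilon\LAP E = \epsilon\LAP(\rho e) + \tfrac12\epsilon\LAP(\rho\bu^2) + \tfrac12\epsilon\LAP(\bB^2)$ and handle three groups. The magnetic group, via $\tfrac12\LAP(\bB^2) = |\nabla\bB|^2 + \bB\SCAL\LAP\bB$, reduces to $s_e\epsilon|\nabla\bB|^2 = s_e\polG_{\bB}:\nabla\bB$. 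The kinetic group, after expanding $\LAP(\rho\bu^2)$ and $\LAP\bbm = \LAP(\rho\bu)$ by the product rule, collapses (the $\LAP\rho$, $\nabla\rho\SCAL\nabla\bu$ and $\LAP\bu$ cross terms cancel) to $s_e\epsilon\rho|\nabla\bu|^2 = s_e\polG_{\bu}:\nabla\bu$. For the remaining scalar group I invoke the preceding lemma: since $\epsilon\rho\nabla s = s_e\bl - (es_e - \rho s_\rho)\epsilon\nabla\rho$, expanding $\DIV(\rho\epsilon\nabla s)$ with $\DIV\bl = \epsilon\LAP(\rho e)$ yields $\DIV(\rho\epsilon\nabla s) + (\epsilon\nabla\rho)\SCAL\nabla(es_e - \rho s_\rho) - \bl\SCAL\nabla s_e = s_e\epsilon\LAP(\rho e) - (es_e - \rho s_\rho)\epsilon\LAP\rho$, which is exactly the scalar group.

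I expect the main obstacle to be the bookkeeping of the viscous terms, especially verifying the kinetic identity $\tfrac12\epsilon\LAP(\rho\bu^2) + \tfrac12\bu^2\epsilon\LAP\rho - \bu\SCAL\epsilon\LAP\bbm = \epsilon\rho|\nabla\bu|^2$, where numerous product-rule cross terms must be tracked to cancellation. Relative to the Euler analysis of \cite{Guermond_Popov_2014}, the genuinely new ingredients are the Maxwell-stress/magnetic-energy cancellation in the internal energy equation and the clean emergence of the extra dissipation term $s_e\polG_{\bB}:\nabla\bB$; the equation-of-state cancellation and the scalar reorganization mirror the Euler case.
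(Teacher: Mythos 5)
Your proposal is correct and follows essentially the same route as the paper: derive the internal-energy balance by subtracting the kinetic- and magnetic-energy equations from the total-energy equation (with the same Maxwell-stress/magnetic-energy cancellation), combine it with the mass equation via the chain rule so that the equation of state kills the $\DIV\bu$ terms, and then reorganize the viscous remainder into $\polG_{\bu}$, $\polG_{\bB}$, and the $\DIV(\epsilon\rho\nabla s)$ group using the preceding lemma. Your splitting of $\epsilon\LAP E$ into internal, kinetic, and magnetic pieces is only a cosmetic reordering of the paper's manipulation of $-\tfrac12\bu^2\LAP\rho-\LAP E+\bu\SCAL\LAP\bbm+\bB\SCAL\LAP\bB$, and the identities you state (in particular $\tfrac12\epsilon\LAP(\rho\bu^2)+\tfrac12\bu^2\epsilon\LAP\rho-\bu\SCAL\epsilon\LAP\bbm=\epsilon\rho|\nabla\bu|^2$) check out.
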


\begin{proof}
Denote $\calE_{\bB}=\frac{1}{2}\rho^{-1}\bB^2$, and $\calE_{\bu} = \frac{1}{2}\bu^2$. From Definition \eqref{eq:def_internal_energy}, the specific internal energy can be expressed as $e = \calE-\calE_{\bu}-\calE_{\bB}$. First, we use this equality to derive an equation controlling the internal energy $e$.
Multiplying \eqref{eq:noncon_m} with $\bu$ gives
\begin{equation}\label{eq:noncon_u_energy}
\rho\left(\p_t\calE_{\bu}+\bu\SCAL\nabla\calE_{\bu}\right)+\bu^2\epsilon\LAP\rho+\bu\SCAL\nabla p - \bu\SCAL(\DIV\bbetaa)-\epsilon\bu\SCAL\LAP\bbm = 0.
\end{equation}
Multiplying \eqref{eq:noncon_B} with $\bB$ and writing $\bB^2/2$ as $\rho\calE_{\bB}$, we end up with the following equation governing the magnetic energy,
\begin{equation}\label{eq:noncon_magnetic_energy}
\rho( \p_t \calE_{\bB}+\bu\SCAL\nabla\calE_{\bB})+\calE_{\bB}\epsilon\LAP\rho+\calE_{\bB}\rho\DIV\bu-\bB\SCAL(\bB\SCAL\nabla)\bu-\epsilon\bB\SCAL\LAP\bB = 0.
\end{equation}
An equation describing internal energy balance is obtained by subtracting \eqref{eq:noncon_u_energy} and \eqref{eq:noncon_magnetic_energy} from \eqref{eq:noncon_E},
\begin{align*}
  &\rho(\p_te+\bu\SCAL\nabla e)+\left(e-\frac{1}{2}\bu^2\right)\epsilon\LAP\rho\\
  & \hspace{2.2cm}+p(\DIV\bu)-\DIV(\bu\SCAL\bbetaa)+\bu\SCAL(\DIV\bbetaa)-\calE_{\bB}\rho\DIV\bu+\bB\SCAL(\bB\SCAL\nabla)\bu\\
& \hspace{2.2cm}-\epsilon\LAP E+\epsilon\bu\SCAL\LAP\bbm+\epsilon\bB\SCAL\LAP\bB = 0.
\end{align*}
By simple manipulation, we can show that
\[
-\DIV(\bu\SCAL\bbetaa)+\bu\SCAL(\DIV\bbetaa)-\calE_{\bB}\rho\DIV\bu+\bB\SCAL(\bB\SCAL\nabla)\bu = 0.
\]
Therefore, the equation can be simplified as
\begin{equation}\label{eq:minimum_entropy_e}
\begin{aligned}
&\rho(\p_te+\bu\SCAL\nabla e) +\left(e-\frac{1}{2}\bu^2\right)\epsilon\LAP\rho+p(\DIV\bu)\\ 
&\hspace{2.2cm}-\epsilon\LAP E+\epsilon\bu\SCAL\LAP\bbm+\epsilon\bB\SCAL\LAP\bB = 0.
\end{aligned}
\end{equation}
Now, we use \eqref{eq:minimum_entropy_e} to derive the desired equality. To utilize the chain rule \eqref{eq:s_chain_rule}, we multiply \eqref{eq:minimum_entropy_e} with $s_e$, \eqref{eq:noncon_rho} with $\rho s_{\rho}$ and add them together to obtain
\begin{equation}\label{eq:minimum_entropy_s}
\begin{aligned}
\rho(\p_ts+\bu\SCAL\nabla s)+(es_e-\rho s_{\rho})\epsilon\LAP\rho+(\rho^2s_{\rho}+ps_e)(\DIV\bu) & \\
  +s_e\left(-\frac{1}{2}\bu^2\epsilon\LAP\rho-\epsilon\LAP E+\epsilon\bu\SCAL\LAP\bbm+\epsilon\bB\SCAL\LAP\bB\right) &= 0,
\end{aligned}
\end{equation}
where $(\rho^2s_{\rho}+ps_e)(\DIV\bu)$ is zero due to the equation of state \eqref{eq:equation_of_state}. The terms inside the last bracket can be written as
\[
-\frac{1}{2}\bu^2\LAP\rho-\LAP E+\bu\SCAL\LAP\bbm+\bB\SCAL\LAP\bB = -\DIV\bl-\nabla\bbm:\nabla\bu+\frac{1}{2}\nabla\rho\SCAL\nabla\bu^2-\DIV(\bB\SCAL\nabla\bB)+\bB\SCAL\LAP\bB.
\]
Because $-\DIV(\bB\SCAL\nabla\bB)+\bB\SCAL\LAP\bB = -\nabla\bB:\nabla\bB$, the equation \eqref{eq:minimum_entropy_s} leads to
\[
\rho(\p_ts+\bu\SCAL\nabla s)+(es_e-\rho s_{\rho})\epsilon\LAP\rho-s_e\DIV\bl-s_e\polG_{\bu}:\nabla\bu-s_e\polG_{\bB}:\nabla\bB = 0.
\]
Applying the product rules $(es_e-\rho s_{\rho})\epsilon\LAP\rho = -(\epsilon\nabla\rho)\SCAL\nabla(es_e-\rho s_{\rho})+\DIV\left[(es_e-\rho s_{\rho})\epsilon\nabla\rho\right]$ and $s_e\DIV\bl=\DIV(\bl s_e)-\bl\SCAL\nabla s_e$, we have
\begin{align*}
  \rho (\p_t s + \bu \SCAL \nabla s) -(\epsilon\nabla\rho)\SCAL\nabla(es_e-\rho s_{\rho})+\DIV\left[(es_e-\rho s_{\rho})\epsilon\nabla\rho-\bl s_e\right]\\
  + \bl\SCAL\nabla s_e-s_e\polG_{\bu}:\nabla\bu - s_e\polG_{\bB}:\nabla\bB = 0.
\end{align*}
Combining with \eqref{eq:relation_rho_e_se}, we have the statement proved.
\end{proof}

The following result is useful for the main theorem of the minimum entropy principle.

\begin{lemma}\label{lemma_J_negative}
The quadratic form 
\[
J_1(\nabla \rho, \nabla e):=-(\epsilon\nabla\rho) \cdot \nabla\left(e s_{e}-\rho s_{\rho}\right)+\boldsymbol{l} \cdot \nabla s_{e}+\epsilon\nabla \rho \cdot \nabla s
\]
is negative definite.
\end{lemma}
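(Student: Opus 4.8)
The plan is to show that $J_1$, although written in terms of the spatial gradients $\nabla\rho$ and $\nabla e$, is really a sum over the $d$ coordinate directions of one and the same $2\times 2$ quadratic form evaluated at $(\partial_k\rho,\partial_k e)$. Since all coefficients are scalar functions of the thermodynamic state $(\rho,e)$ and every term in $J_1$ is a dot product of two gradients, the form splits as $J_1=\sum_{k=1}^d q(\partial_k\rho,\partial_k e)$ for a single symmetric bilinear form $q$. Negative definiteness of $J_1$ is then equivalent to negative definiteness of the associated $2\times 2$ symmetric matrix $M$, which I would test with Sylvester's criterion.

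To obtain $M$ explicitly, I would expand each of the three terms using the chain rule $\nabla s=s_\rho\nabla\rho+s_e\nabla e$ together with $\nabla s_e=s_{\rho e}\nabla\rho+s_{ee}\nabla e$ and $\nabla s_\rho=s_{\rho\rho}\nabla\rho+s_{\rho e}\nabla e$. For the first term I differentiate $es_e-\rho s_\rho$ to get $\nabla(es_e-\rho s_\rho)=(e s_{\rho e}-s_\rho-\rho s_{\rho\rho})\nabla\rho+(s_e+e s_{ee}-\rho s_{\rho e})\nabla e$; for the second I substitute the definition $\boldsymbol{l}=\epsilon\nabla(\rho e)=\epsilon(e\nabla\rho+\rho\nabla e)$; the third is immediate. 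Writing $A=|\nabla\rho|^2$, $B=\nabla\rho\cdot\nabla e$, $C=|\nabla e|^2$ and collecting, the $s_e$ and $e s_{ee}$ contributions from the first and third terms cancel pairwise against the second, leaving (after dividing by $\epsilon>0$)
\[
\epsilon^{-1}J_1=\bigl(2s_\rho+\rho s_{\rho\rho}\bigr)A+2\rho s_{\rho e}\,B+\rho s_{ee}\,C.
\]
The key simplification to keep an eye on is the coefficient of $A$: using $\partial_\rho(\rho^2 s_\rho)=2\rho s_\rho+\rho^2 s_{\rho\rho}$ I rewrite $2s_\rho+\rho s_{\rho\rho}=\rho^{-1}\partial_\rho(\rho^2 s_\rho)$, so that the matrix becomes
\[
M=\begin{pmatrix}\rho^{-1}\partial_\rho(\rho^2 s_\rho) & \rho s_{\rho e}\\[2pt] \rho s_{\rho e} & \rho s_{ee}\end{pmatrix}.
\]

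Finally, with $\rho>0$, Sylvester's criterion for negative definiteness requires $M_{11}<0$ and $\det M>0$. The first reads $\partial_\rho(\rho^2 s_\rho)<0$, which is the first inequality in \eqref{eq:convex_entropy_assumption}, and $\det M=\partial_\rho(\rho^2 s_\rho)\,s_{ee}-\rho^2 s_{\rho e}^2>0$ is exactly the third; the remaining hypothesis $s_{ee}<0$ is the second and is in any case then automatic. Hence $M$ is negative definite and so is $J_1$. I expect the only real obstacle to be the careful algebraic bookkeeping in the collection step — in particular verifying that the $s_{\rho e}$ and $s_e$ terms from the first and second contributions combine so as to produce a \emph{symmetric} form, and spotting the rewriting $2s_\rho+\rho s_{\rho\rho}=\rho^{-1}\partial_\rho(\rho^2 s_\rho)$ that links the $A$-coefficient directly to the convexity hypotheses.
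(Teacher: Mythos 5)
Your proposal is correct and follows essentially the same route as the paper: both reduce $J_1$ to the single $2\times 2$ quadratic form $\epsilon\bigl(\rho^{-1}\partial_\rho(\rho^2 s_\rho)|\nabla\rho|^2+2\rho s_{\rho e}\,\nabla\rho\cdot\nabla e+\rho s_{ee}|\nabla e|^2\bigr)$ and conclude negative definiteness from the convexity inequalities \eqref{eq:convex_entropy_assumption}. The only cosmetic differences are that you substitute $\boldsymbol{l}=\epsilon(e\nabla\rho+\rho\nabla e)$ directly instead of the rewritten form \eqref{eq:relation_rho_e_se}, and you invoke Sylvester's criterion ($M_{11}<0$, $\det M>0$) where the paper uses negative trace plus positive determinant — equivalent tests for a symmetric $2\times 2$ matrix.
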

\begin{proof}
Using \eqref{eq:relation_rho_e_se} and $\nabla s = s_e\nabla e + s_{\rho}\nabla\rho$, we can express $J_1$ as
\begin{equation*}
\begin{aligned}
  J_1=&-\epsilon s_{e}\nabla\rho\SCAL\nabla e-\epsilon e\nabla\rho\SCAL\nabla s_{e} + \epsilon s_{\rho}|\nabla \rho|^{2}+\epsilon\rho\nabla\rho\SCAL\nabla s_{\rho}\\
  &+\epsilon e\nabla\rho\SCAL\nabla s_{e}-\epsilon s_e^{-1}\rho s_{\rho}\nabla\rho\SCAL\nabla s_{e} \\
&+\epsilon\rho s_{e}^{-1}\nabla s_{e}\SCAL\left(s_{\rho}\nabla\rho+s_{e}\nabla e\right)+\epsilon\nabla\rho\SCAL\left(s_{\rho}\nabla\rho+s_{e}\nabla e\right).
\end{aligned}
\end{equation*}
Grouping terms with regards to quadratic terms of $(\nabla \rho, \nabla e)$ gives
\begin{equation*}
\begin{aligned}
J_1 = (\epsilon\rho s_{\rho\rho}+2\epsilon s_{\rho}) |\nabla\rho|^2+ 2\epsilon\rho s_{\rho e} \nabla\rho\SCAL\nabla e + \epsilon\rho s_{ee} |\nabla e|^2.
\end{aligned}
\end{equation*}
That allows us to rewrite $J_1$ in matrix form as
\begin{equation}\label{eq:J_matrix_form}
J_1 = 
\begin{pmatrix}
  \nabla\rho\\
  \nabla e
\end{pmatrix}
\left(
\begin{pmatrix}
\epsilon\rho^{-1}\p_{\rho}(\rho^2 s_{\rho}) & \epsilon\rho s_{\rho e} \\
\epsilon\rho s_{\rho e} & \epsilon\rho s_{ee}  
\end{pmatrix}
\otimes
\polI_d
\right)
\begin{pmatrix}
\nabla\rho & \nabla e
\end{pmatrix}.
\end{equation}
From the convex entropy inequalities \eqref{eq:convex_entropy_assumption}, we can see that the $2\times 2$ matrix
\begin{equation}\label{eq:J1_matrix}
\begin{pmatrix}
\epsilon\rho^{-1}\p_{\rho}(\rho^2 s_{\rho}) & \epsilon\rho s_{\rho e} \\
\epsilon\rho s_{\rho e} & \epsilon\rho s_{ee}  
\end{pmatrix}
\end{equation}
has positive determinant and a negative trace. They are sufficient conditions for negative definiteness of \eqref{eq:J1_matrix}. 
The lemma is proved. Therefore, we always have $J_1 \leq 0$ for all $(\nabla \rho, \nabla e)$.
\end{proof}
We proceed as in \citep{Guermond_Popov_2014}. The following theorem completes the purpose of this section.
\begin{theorem}[Minimum entropy principle]
Upon smoothness of the solution to \eqref{eq:con_rho}-\eqref{eq:con_B}, and the assumptions of Theorem \ref{theorem:positivity_density}, and $\bB\in L^1(D)$. We further assume that the density and the internal energy uniformly converge to constant states $\rho^*, e^*$ and remain constant outside of a compact set of interest. For all $t > 0$, the minimum entropy principle holds:
\[
\inf_{\bx \in \mR^{d}} s(\bx, t) \geq \inf _{\boldsymbol{x} \in \mathbb{R}^{d}} s_{0}(\boldsymbol{x}).
\]
\end{theorem}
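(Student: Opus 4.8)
The plan is to convert the entropy balance of Lemma~\ref{lemma:entropy_equation} into a scalar convection--diffusion differential inequality for $s$ and then invoke a parabolic minimum principle. Everything hinges on recognizing that, modulo a sign-indefinite gradient term that can be folded into an effective transport velocity, the right-hand side assembled from Lemmas~\ref{lemma:entropy_equation} and \ref{lemma_J_negative} has a definite sign.

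First I would substitute the definition of $J_1$ from Lemma~\ref{lemma_J_negative} into the identity of Lemma~\ref{lemma:entropy_equation}. Since by definition
\[
-(\epsilon\nabla\rho)\SCAL\nabla(es_e-\rho s_{\rho})+\bl\SCAL\nabla s_e = J_1 - \epsilon\nabla\rho\SCAL\nabla s,
\]
the entropy identity rearranges into
\[
\rho(\p_t s+\bu\SCAL\nabla s)-\DIV(\rho\epsilon\nabla s)-\epsilon\nabla\rho\SCAL\nabla s = -J_1 + s_e\polG_{\bu}:\nabla\bu + s_e\polG_{\bB}:\nabla\bB.
\]
I would then argue that the entire right-hand side is nonnegative: Lemma~\ref{lemma_J_negative} gives $-J_1\geq 0$; moreover $\polG_{\bu}:\nabla\bu=\epsilon\rho|\nabla\bu|^2$ and $\polG_{\bB}:\nabla\bB=\epsilon|\nabla\bB|^2$ are nonnegative Frobenius products, and since $s_e=T^{-1}>0$ by assumption while $\rho>0$ by Theorem~\ref{theorem:positivity_density}, the two viscous dissipation contributions are nonnegative as well. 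Hence the left-hand side is $\geq 0$.

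Next I would recast this bound as a parabolic supersolution. Dividing by $\rho>0$ and using $\rho^{-1}\DIV(\rho\epsilon\nabla s)=\epsilon\LAP s+\epsilon\rho^{-1}\nabla\rho\SCAL\nabla s$, the inequality becomes
\[
\p_t s + \Bigl(\bu-\tfrac{2\epsilon}{\rho}\nabla\rho\Bigr)\SCAL\nabla s - \epsilon\LAP s \geq 0,
\]
so that $s$ is a supersolution of a linear convection--diffusion operator with effective velocity $\widetilde{\bu}:=\bu-2\epsilon\rho^{-1}\nabla\rho$; the troublesome, sign-indefinite term $\epsilon\nabla\rho\SCAL\nabla s$ is thereby absorbed into the drift and never needs a sign of its own. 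Finally I would apply the minimum principle for such operators, exploiting the assumed far-field behavior: density and internal energy, hence $s$, equal the constant $s^*=s(\rho^*,e^*)$ outside a compact set $K$, with $s^*\geq\inf_{\mR^d}s_0$ because $s_0\equiv s^*$ there. This localizes the problem to $K$, where the boundary data on $\p K$ equals $s^*$ and the initial data dominates $\inf_{\mR^d}s_0$, so the minimum principle yields $s(\bx,t)\geq\inf_{\mR^d}s_0$ on $K$, and trivially outside, proving the claim.

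The main obstacle is making this last step rigorous rather than relying on the formal ``evaluate at the spatial minimizer'' heuristic, where $\nabla s=0$ annihilates the drift and $\LAP s\geq 0$ forces $\p_t s\geq 0$. The genuine difficulties are that the operator is only quasilinear and possibly degenerate (its coefficients $\widetilde{\bu}$ and $\rho$ depend on the solution and are merely $L^1$/$L^\infty$-controlled through the hypotheses inherited from Theorem~\ref{theorem:positivity_density}), that the infimum over $\mR^d$ must be shown to be attained in a compact region with $t\mapsto\inf_{\bx}s$ differentiable almost everywhere, and that the far-field constancy must genuinely reduce the whole-space statement to a bounded-domain one. These are precisely the regularity and decay assumptions imported from \cite{Guermond_Popov_2014}, so the argument mirrors theirs for the Euler system; the MHD-specific content is confined to verifying that the extra magnetic term $s_e\polG_{\bB}:\nabla\bB$ retains the correct sign, which it does.
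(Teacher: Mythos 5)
Your proposal is correct and follows essentially the same route as the paper: it assembles the identical differential inequality $\rho(\p_t s+\bu\SCAL\nabla s)-\DIV(\epsilon\rho\nabla s)-\epsilon\nabla\rho\SCAL\nabla s=-J_1+s_e\polG_{\bu}:\nabla\bu+s_e\polG_{\bB}:\nabla\bB\geq 0$ from Lemmas~\ref{lemma:entropy_equation} and~\ref{lemma_J_negative}, with the same sign arguments for the quadratic dissipation terms (including the MHD-specific $s_e\polG_{\bB}:\nabla\bB=\epsilon s_e|\nabla\bB|^2\geq 0$). The only divergence is cosmetic: the paper concludes by evaluating at the spatial minimizer, where $\nabla s=0$ and $\LAP s\geq 0$ force $\p_t s\geq 0$, whereas you fold the gradient term into an effective drift and invoke a bounded-domain parabolic minimum principle via the far-field constancy --- a slightly more careful packaging of the same final step, which you yourself identify as the rigorous version of the paper's pointwise heuristic.
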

\begin{proof}
From Lemmas \ref{lemma:entropy_equation} and \ref{lemma_J_negative}, we have
\begin{equation}\label{eq:minimum_entropy_1}
\rho\left(\partial_{t} s+\boldsymbol{u} \cdot \nabla s\right)-\nabla \cdot(\epsilon \rho \nabla s)-a \nabla \rho \cdot \nabla s=-J_1+s_{e}\polG_{\bu}: \nabla \boldsymbol{u}+s_e\polG_{\bB}:\nabla\bB \geq 0.
\end{equation}
If the infimum of $s(\bx,t)$ is reached outside of the compact set in the assumption, then the result follows readily since $\inf_{\bx \in \mR^{d}} s(\bx, t)=s^*\geq \inf_{\bx \in \mR^{d}} s_{0}(\boldsymbol{x})$, with $s(\bx,t)\to s^*$ as $\bx\to\infty$ due to the uniform convergence and smoothness assumptions on density and internal energy. Otherwise, at the point $\bar{\bx}(t)$ where $s(\bx, t)$ reaches its infimum, by smoothness assumptions, we have $\nabla s(\bar{\bx}(t),t) = 0$ and $\LAP s(\bar{\bx}(t),t) > 0$. From \eqref{eq:minimum_entropy_1}, it is clear that
\[
\rho \partial_{t} s\left(\bar{\bx}(t), t\right)-\epsilon\rho\LAP s\left(\bar{\bx}(t), t\right) \geq 0.
\]
This says that $\rho \partial_{t} s\left(\bar{\bx}(t), t\right) \geq 0$, which leads to the conclusion since $\rho > 0$ by Theorem \ref{theorem:positivity_density}.
\end{proof}

\begin{remark}[Positivity of internal energy]
For completeness, we repeat this argument by \cite{Guermond_Popov_2014}. The minimum entropy principle and the positivity of density lead to the positivity of specific internal energy $e > 0$ and therefore the positivity of internal energy $\rho e > 0$.  From the equation of state \eqref{eq:equation_of_state}, we can write $s = s(e,\rho^{-1})$ and $e = e(s,\rho^{-1})$. By \cite[Appendix A.1]{Guermond_Popov_2014}, we have that the assumption of positive temperature $s_e > 0$ leads to $e_s > 0$ if $\rho > 0$. Combining this with the minimum entropy principle, at time $t > 0$, we have $e(s,\rho^{-1}) \geq e(\min_\Omega s,\rho^{-1}) \geq e(\min_\Omega s_0,\rho^{-1}), \;\forall \rho > 0$. This means that if the specific internal energy is positive at $t=0$, then it remains positive at any $t > 0$. Only in the case of ideal gases, due to \eqref{eq:ideal_pressure}, the positivity of internal energy implies the positivity of pressure. 
\end{remark}
\subsection{Generalized entropy inequalities}
Let $f(s)$ be a twice differentiable function. We consider a class of strictly convex generalized entropies in the form $\rho f(s)$, as derived for the Euler equations by \citep{Harten_1998}. The choice of the form $\rho f(s)$ can be motivated by the unregularized equations of \eqref{eq:con_rho} and \eqref{eq:minimum_entropy_s}, which respectively give $\p_t\rho + \DIV(\rho\bu) = 0$ and $\p_t s + \bu\SCAL\nabla s = 0$. The latter equation leads to $\p_t f(s) + \bu\SCAL\nabla f(s) = 0$ for any differentiable function $f$. Multiplying $\p_t\rho + \DIV(\rho\bu) = 0$ with $f$ and $\p_t f(s) + \bu\SCAL\nabla f(s) = 0$ with $\rho$, and adding them together leads to $\p_t(\rho f(s)) + \DIV (\bu\rho f(s))$ = 0. Therefore, $\rho f(s)$ represents a large class of entropies for the MHD equations. We investigate some important properties of this class in the following theorem. We note that the results in Theorem \ref{theorem:convex_entropy} are well established for the Euler equations, see \cite{Harten_1983, Harten_1998}.

\begin{theorem}[Inequalities of strictly convex generalized entropies]\label{theorem:convex_entropy}
The specific heat capacity at constant pressure $c_p$ is defined as $c_p = T\frac{\partial s(p,T)}{\partial T}$. The generalized entropy $-\rho f(s)$ is strictly convex if and only if
\begin{equation}\label{eq:convex_entropy_inequalities}
f^{\prime}(s)>0, \quad \frac{f^{\prime}(s)}{c_p}-f^{\prime \prime}(s)>0.
\end{equation}
\end{theorem}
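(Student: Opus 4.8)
The plan is to reduce strict convexity of the generalized entropy $-\rho f(s)$ to positive definiteness of its Hessian with respect to the conserved variables $\bU=(\rho,\bbm,E,\bB)$, and then to peel that condition into the two scalar inequalities \eqref{eq:convex_entropy_inequalities}. Since $\eta:=-\rho f(s)$ depends on $\bU$ only through $\rho$ and $e$, I would write $\eta=G(\rho,e)$ with $G(\rho,e)=-\rho f(s(\rho,e))$ and expand the Hessian quadratic form $V:=\delta\bU^{\top}(\nabla_{\bU}^{2}\eta)\,\delta\bU$ by the chain rule into a thermodynamic part built from $G_{\rho\rho},G_{\rho e},G_{ee}$ and the increment $\delta e=\nabla_{\bU}e\cdot\delta\bU$, plus the single geometric term $G_{e}\,\delta\bU^{\top}(\nabla_{\bU}^{2}e)\,\delta\bU$. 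This needs only the elementary first and second derivatives of $e=E/\rho-|\bbm|^{2}/(2\rho^{2})-|\bB|^{2}/(2\rho)$ with respect to $\bU$.

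The decisive structural step is a linear, invertible change of increment variables from $(\delta\rho,\delta\bbm,\delta E,\delta\bB)$ to $(\delta\rho,\delta\bu,\delta e,\delta\bB)$, legitimate because $\partial e/\partial E=\rho^{-1}\neq0$ and $\delta\bu=\rho^{-1}(\delta\bbm-\bu\,\delta\rho)$. Completing squares, I expect the geometric term to collapse to the clean form $\delta\bU^{\top}(\nabla_{\bU}^{2}e)\,\delta\bU=-|\delta\bu|^{2}-\rho^{-1}|\delta\bB|^{2}-2\rho^{-1}\delta\rho\,\delta e$, whereupon $V$ becomes block diagonal among the three groups $(\delta\rho,\delta e)$, $\delta\bu$, and $\delta\bB$: a $2\times2$ thermodynamic block $M$ acting on $(\delta\rho,\delta e)$ together with $(-G_{e})(|\delta\bu|^{2}+\rho^{-1}|\delta\bB|^{2})$. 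Because $G_{e}=-\rho f'(s)\,s_{e}=-\rho f'(s)/T$, the momentum and magnetic contributions are positive definite exactly when $f'(s)>0$; the magnetic field enters only through the extra term $(-G_{e})\rho^{-1}|\delta\bB|^{2}$, so once $f'>0$ the passage from the Euler analysis to the MHD case is automatic, and the whole problem collapses to characterizing $M\succ0$.

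For the thermodynamic block I would decompose $M=f''\,M^{(1)}+f'\,M^{(2)}$, where $M^{(1)}=-\rho\,\mathbf{w}\mathbf{w}^{\top}$ with $\mathbf{w}=(s_{\rho},s_{e})^{\top}$ is rank one and negative semidefinite, and $M^{(2)}$ is the symmetric matrix whose entries combine $s_{\rho\rho},s_{\rho e},s_{ee}$ and whose positive definiteness is exactly the strict-convexity content of \eqref{eq:convex_entropy_assumption}. Since $f'M^{(2)}\succ0$ and we subtract the rank-one term $\rho f''\mathbf{w}\mathbf{w}^{\top}$, eigenvalue interlacing keeps the larger eigenvalue of $M$ positive, so $M\succ0$ is equivalent to $\det M>0$; the matrix determinant lemma then rewrites this as $f'-f''\,\rho\,\mathbf{w}^{\top}(M^{(2)})^{-1}\mathbf{w}>0$, uniformly in the sign of $f''$ once $f'>0$. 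The quantity $\rho\,\mathbf{w}^{\top}(M^{(2)})^{-1}\mathbf{w}$ is automatically positive because $(M^{(2)})^{-1}\succ0$ and $\mathbf{w}\neq0$ (as $s_{e}=1/T\neq0$).

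The main obstacle is the final thermodynamic identification $\rho\,\mathbf{w}^{\top}(M^{(2)})^{-1}\mathbf{w}=c_{p}$, which is precisely what converts the abstract determinant condition into the stated form $f'/c_{p}-f''>0$. Proving it requires passing from the $(\rho,e)$ description of $s$ to the $(p,T)$ description underlying $c_{p}=T\,\partial s(p,T)/\partial T$, using $s_{e}=T^{-1}$, the equation of state \eqref{eq:equation_of_state}, and the second-order relations among $s_{\rho\rho},s_{\rho e},s_{ee}$ packaged in $M^{(2)}$; this change of thermodynamic variables is the delicate and error-prone part, whereas the completion of squares for $\nabla_{\bU}^{2}e$ is the heaviest but entirely routine computation. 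I would sanity-check the identity on the ideal gas \eqref{eq:s}, where $M^{(2)}$ is diagonal and one verifies directly that $\rho\,\mathbf{w}^{\top}(M^{(2)})^{-1}\mathbf{w}=\gamma c_{v}=c_{p}$. Dividing $f'-c_{p}f''>0$ by $c_{p}>0$ yields the second inequality in \eqref{eq:convex_entropy_inequalities}, and together with $f'>0$ this establishes the equivalence.
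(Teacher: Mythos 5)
Your plan follows essentially the same route as the paper's Appendix~B proof: a congruence transformation of the Hessian $S_{\bU\bU}$ to the increments $(\delta\rho,\delta\bu,\delta e,\delta\bB)$ (implemented there by the matrix $\bP$), which decouples the velocity and magnetic blocks with eigenvalues proportional to $f'(s)s_e$ and $f'(s)\rho s_e$ and leaves a $2\times 2$ thermodynamic block. Your matrix-determinant-lemma reduction of that block to the identity $\rho\,\mathbf{w}^\top (M^{(2)})^{-1}\mathbf{w}=c_p$ is only a repackaging of the final step the paper delegates to Harten (``from (3.13)''), so the two arguments coincide in substance.
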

\begin{proof}
See Appendix \ref{appendix:convex_entropy}.
\end{proof}

\begin{lemma}\label{lemma:M_negative_definite}The following $2\times 2$ matrix is negative definite
\[
\begin{pmatrix}
c_p^{-1}\rho s_\rho^2+\rho^{-1}\p_{\rho}(\rho^2 s_{\rho}) & c_p^{-1}s_\rho\rho s_e+\rho s_{\rho e} \\
c_p^{-1}\rho s_\rho s_e+\rho s_{\rho e} & c_p^{-1}\rho s_e^2+ \rho s_{ee}  
\end{pmatrix}.
\]
\end{lemma}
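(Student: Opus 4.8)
The plan is to write $M$ as a rank-one perturbation of a matrix already known to be negative definite, and then to certify negative definiteness by the elementary $2\times2$ criterion: a strictly negative trace together with a strictly positive determinant. Setting $w:=\begin{pmatrix} s_{\rho} & s_{e}\end{pmatrix}^{\top}$, I would record the splitting
\[
M = c_p^{-1}\rho\, w w^{\top} + B,\qquad
B:=\begin{pmatrix} \rho^{-1}\p_{\rho}(\rho^2 s_{\rho}) & \rho s_{\rho e}\\ \rho s_{\rho e} & \rho s_{ee}\end{pmatrix},
\]
in which $B$ is exactly the matrix \eqref{eq:J1_matrix} with the factor $\epsilon$ stripped off. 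By Lemma~\ref{lemma_J_negative} (equivalently, by the strict convexity relations \eqref{eq:convex_entropy_assumption}) $B$ has negative trace and positive determinant, hence is negative definite; the entire difficulty is therefore concentrated in the rank-one term $c_p^{-1}\rho\,w w^{\top}$.

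First I would pin down the sign of $c_p$. Applying the first criterion of Theorem~\ref{theorem:convex_entropy} to the admissible choice $f(s)=s$, for which $f'=1$ and $f''=0$, the second inequality in \eqref{eq:convex_entropy_inequalities} reduces to $1/c_p>0$, so $c_p>0$. Since $\rho>0$, the perturbation $c_p^{-1}\rho\,w w^{\top}$ is positive semidefinite and so does not by itself help the trace; for the trace I would instead control $\operatorname{tr} M=\operatorname{tr} B+c_p^{-1}\rho\,(s_{\rho}^2+s_e^2)$ by combining $\operatorname{tr} B<0$ with the determinant estimate below.

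The determinant is the heart of the matter. By the matrix determinant lemma,
\[
\det M=\det(B)\bigl(1+c_p^{-1}\rho\,w^{\top}B^{-1}w\bigr)=\det B+c_p^{-1}\rho\,w^{\top}\operatorname{adj}(B)\,w,
\]
so I must decide how the rank-one contribution compares with the positive quantity $\det B$. To evaluate $w^{\top}\operatorname{adj}(B)\,w$ I would convert the thermodynamic definition $c_p=T\,\partial s/\partial T$ at constant pressure into an identity in the entropy Hessian: starting from $T=s_e^{-1}$ and the equation of state $p=-\rho^2 s_{\rho}/s_e$ (that is, \eqref{eq:equation_of_state}), a Jacobian computation expresses $c_p$ through $\p_{\rho}(\rho^2 s_{\rho})$, $s_{\rho e}$ and $s_{ee}$. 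Feeding this identity into the displayed formula ties the rank-one perturbation to $\det B$, and the strict positivity of $\det M$ then reduces to the strict second inequality $f'(s)/c_p-f''(s)>0$ of \eqref{eq:convex_entropy_inequalities}.

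I expect this last step to be the main obstacle. The rank-one perturbation and $\det B$ turn out to be of the same size, so producing a strict sign for $\det M$ — rather than mere nonnegativity — forces me to track precisely where the strict inequalities in \eqref{eq:convex_entropy_inequalities} and \eqref{eq:convex_entropy_assumption} enter, since relaxing any of them would collapse the determinant and leave only negative semidefiniteness. A cleaner alternative that sidesteps the Jacobian algebra would be to exhibit $M$ as a congruence image (a Schur-type slice) of the Hessian of the strictly convex generalized entropy $-\rho f(s)$ supplied by Theorem~\ref{theorem:convex_entropy}, so that negative definiteness of $M$ is inherited directly from that strict convexity; verifying that this slice is the genuinely nondegenerate block is then the point that requires care.
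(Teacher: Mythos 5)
Your plan's central step --- producing a \emph{strict} sign for $\det M$ --- cannot be carried out, because $\det M$ is identically zero; the matrix in Lemma~\ref{lemma:M_negative_definite} is singular for every equation of state satisfying the paper's hypotheses. Keep your notation $M=B+c_p^{-1}\rho\,ww^{\top}$, $w=(s_\rho,s_e)^{\top}$, $B$ the matrix \eqref{eq:J1_matrix} with $\epsilon$ stripped off. Carrying out the Jacobian computation you propose (using $T=s_e^{-1}$ and $p=-\rho^2 s_\rho/s_e$ from \eqref{eq:equation_of_state}) gives exactly
\[
c_p \;=\; T\,\frac{\p s}{\p T}\Big|_{p}\;=\;\frac{\rho\,w^{\top}\operatorname{adj}(B)\,w}{-\det B}\;=\;-\rho\,w^{\top}B^{-1}w ,
\]
so the factor in the matrix determinant lemma collapses:
\[
\det M=\det B\,\bigl(1+c_p^{-1}\rho\,w^{\top}B^{-1}w\bigr)=\det B\,(1-1)=0 .
\]
The ideal gas makes this concrete: with \eqref{eq:ideal_pressure} and \eqref{eq:s} one has $c_p=\gamma c_v$, $s_{\rho e}=0$, and
\[
M=-\frac{c_v(\gamma-1)}{\gamma}
\begin{pmatrix}
\rho^{-1} & e^{-1}\\
e^{-1} & \rho e^{-2}
\end{pmatrix},
\]
whose determinant is zero and whose kernel is spanned by $(\rho,-e)^{\top}$. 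In general $\ker M$ is spanned by $(p_e,-p_\rho)^{\top}$, the direction of isobaric perturbations $\delta p=0$ --- the PDE-level counterpart of the contact-wave compatibility discussed in Section~\ref{sec:example_contact_wave}. So no bookkeeping of strict inequalities can rescue definiteness: the rank-one term and $\det B$ are not merely ``of the same size'', they cancel exactly.

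The correct statement is therefore negative \emph{semi}definiteness with a one-dimensional kernel, and your own machinery proves it: since $B$ is negative definite by \eqref{eq:convex_entropy_assumption} and $c_p=-\rho\,w^{\top}B^{-1}w>0$ (note this settles the sign of $c_p$ directly, without the detour through Theorem~\ref{theorem:convex_entropy} applied to $f(s)=s$ --- a detour that anyway presupposes strict convexity of $-\rho s$ in the conserved variables, which is not literally among the paper's assumptions), the Cauchy--Schwarz inequality in the inner product induced by $-B$ gives $c_p^{-1}\rho\,(w^{\top}z)^2\le -z^{\top}Bz$ for all $z$, i.e.\ $z^{\top}Mz\le 0$, with equality exactly along $z\parallel B^{-1}w$. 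Semidefiniteness is also all the paper uses downstream: the entropy-inequalities theorem only needs $J_2\le 0$. The paper's own proof is a one-line citation of \citep[Lemma A.3]{Guermond_Popov_2014}, which (necessarily, by the computation above) is a semidefiniteness result; the word ``definite'' in the statement of Lemma~\ref{lemma:M_negative_definite} is an overstatement. Your fallback idea --- exhibiting $M$ as a congruence slice of the Hessian of some strictly convex $-\rho f(s)$ --- fails for the same structural reason: by Theorem~\ref{theorem:convex_entropy}, $B+\alpha\rho\,ww^{\top}$ (with $\alpha=f''/f'$) is negative definite precisely when $\alpha<c_p^{-1}$, and $M$ is the boundary case $\alpha=c_p^{-1}$, hence singular. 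If you redirect your write-up at the semidefinite statement, your decomposition plus the $c_p$ identity yields a clean, self-contained proof of what the paper actually needs.
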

\begin{proof}
The negative definiteness of the above $2\times 2$ matrix is shown in \citep[Lemma A.3]{Guermond_Popov_2014} for the Euler equations. The proof remains valid for the MHD equations because the only invoked necessities are the convexity assumption on $-s$ and the thermodynamic identity \eqref{eq:thermodynamic_identity}.
\end{proof}

\begin{theorem}[entropy inequalities]
Any smooth solution to the regularized system \eqref{eq:con_rho}-\eqref{eq:con_B} satisfies the entropy inequality
\[
\partial_{t}(\rho f(s))+\nabla \cdot(\boldsymbol{u} \rho f(s)-\epsilon\rho \nabla f(s)-\epsilon f(s) \nabla \rho) \geq 0
\]
\end{theorem}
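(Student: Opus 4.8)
The plan is to convert the specific-entropy balance of Lemma~\ref{lemma:entropy_equation} into a balance for $\rho f(s)$ by multiplying through by $f'(s)$ and combining it with the mass equation, then to reorganize the viscous contributions into the exact conservative divergence displayed in the statement, so that what remains on the right is a sum of manifestly nonnegative terms. First I would handle the convective part. Using the regularized mass equation \eqref{eq:con_rho} in the form $\p_t\rho = -\DIV(\rho\bu) + \epsilon\LAP\rho$ together with the chain rule \eqref{eq:s_chain_rule}, the pure transport terms cancel and a short computation gives
\[
\p_t(\rho f(s)) + \DIV(\bu\,\rho f(s)) = f(s)\,\epsilon\LAP\rho + \rho f'(s)\,(\p_t s + \bu\SCAL\nabla s).
\]
Into the last factor I would substitute the identity obtained by combining Lemma~\ref{lemma:entropy_equation} with the definition of $J_1$ in Lemma~\ref{lemma_J_negative} (this is \eqref{eq:minimum_entropy_1}), namely $\rho(\p_t s + \bu\SCAL\nabla s) = \DIV(\epsilon\rho\nabla s) + \epsilon\nabla\rho\SCAL\nabla s - J_1 + s_e\polG_{\bu}:\nabla\bu + s_e\polG_{\bB}:\nabla\bB$.

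Next I would subtract the two viscous divergences appearing in the target flux. Using $\nabla f(s) = f'(s)\nabla s$ and the product rule,
\[
\DIV(\epsilon\rho\nabla f(s)) = f'(s)\DIV(\epsilon\rho\nabla s) + \epsilon\rho f''(s)|\nabla s|^2, \qquad \DIV(\epsilon f(s)\nabla\rho) = \epsilon f'(s)\,\nabla s\SCAL\nabla\rho + \epsilon f(s)\LAP\rho.
\]
When these are subtracted, the terms $f(s)\epsilon\LAP\rho$, $f'(s)\DIV(\epsilon\rho\nabla s)$, and $\epsilon f'(s)\nabla\rho\SCAL\nabla s$ cancel in pairs, leaving precisely
\[
\p_t(\rho f(s)) + \DIV(\bu\,\rho f(s) - \epsilon\rho\nabla f(s) - \epsilon f(s)\nabla\rho) = -f'(s)J_1 - \epsilon\rho f''(s)|\nabla s|^2 + f'(s)s_e(\polG_{\bu}:\nabla\bu + \polG_{\bB}:\nabla\bB).
\]
It then remains to show the right-hand side is nonnegative.

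The dissipative term is the easy part: $\polG_{\bu}:\nabla\bu = \epsilon\rho|\nabla\bu|^2 \geq 0$ and $\polG_{\bB}:\nabla\bB = \epsilon|\nabla\bB|^2 \geq 0$, while $f'(s) > 0$ by \eqref{eq:convex_entropy_inequalities}, $s_e > 0$ by the positive-temperature assumption, and $\rho > 0$ by Theorem~\ref{theorem:positivity_density}. The crux, which I expect to be the main obstacle, is the remaining quadratic form $-f'(s)J_1 - \epsilon\rho f''(s)|\nabla s|^2$ in $(\nabla\rho,\nabla e)$. Writing $\nabla s = s_\rho\nabla\rho + s_e\nabla e$ exhibits $|\nabla s|^2$ as the quadratic form with the rank-one positive semidefinite matrix $B := (s_\rho,s_e)^\top(s_\rho,s_e)$, and using the matrix representation \eqref{eq:J_matrix_form} of $J_1$ I would write this quantity as the quadratic form of $(\nabla\rho,\nabla e)$ against $-\epsilon f'(s)\big(A + \rho\tfrac{f''(s)}{f'(s)}B\big)\otimes\polI_d$, where $A$ is the matrix in \eqref{eq:J1_matrix} with the factor $\epsilon$ removed.

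The decisive observation is that the matrix $M$ of Lemma~\ref{lemma:M_negative_definite} is exactly $A + \tfrac{\rho}{c_p}B$, so that $A + \rho\tfrac{f''}{f'}B = M - \rho\big(\tfrac{1}{c_p} - \tfrac{f''}{f'}\big)B$. Dividing the convexity inequality $\tfrac{f'}{c_p} - f'' > 0$ from \eqref{eq:convex_entropy_inequalities} by $f' > 0$ yields $\tfrac{1}{c_p} - \tfrac{f''}{f'} > 0$; since $\rho > 0$ and $B$ is positive semidefinite, subtracting a nonnegative multiple of $B$ from the negative-definite $M$ keeps it negative definite. Hence $A + \rho\tfrac{f''}{f'}B$ is negative definite, and after multiplication by $\epsilon f'(s) > 0$ and negation the quadratic form is nonnegative. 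Adding this to the nonnegative dissipative term completes the inequality. The only delicate points are the bookkeeping of the cancellations in the first two steps and the correct identification $M = A + \tfrac{\rho}{c_p}B$ that lets Lemma~\ref{lemma:M_negative_definite} absorb the $f''$ contribution.
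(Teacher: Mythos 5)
Your proposal is correct and follows essentially the same route as the paper: both reduce the claim to the identity obtained by multiplying \eqref{eq:minimum_entropy_1} by $f'(s)$ and combining with the mass equation, and both control the residual quadratic term via the convexity condition $f'/c_p - f'' > 0$ together with the negative definiteness of the matrix in Lemma~\ref{lemma:M_negative_definite}. The only cosmetic difference is that you perform the bound at the matrix level (writing $A + \rho \tfrac{f''}{f'}B = M - \rho(\tfrac{1}{c_p}-\tfrac{f''}{f'})B$), whereas the paper first bounds $\epsilon\rho f''|\nabla s|^2 + J_1 f'$ by $f'(\epsilon\rho c_p^{-1}|\nabla s|^2 + J_1) =: f'J_2$ at the scalar level and then identifies $J_2$ with the quadratic form of that same matrix.
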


\begin{proof}
Multiplying both sides of \eqref{eq:minimum_entropy_1} with $f'(s)$, we have
\[
\begin{array}{c}
\rho\left(\partial_{t} f(s)+\boldsymbol{u} \cdot \nabla f(s)\right)-\nabla \cdot(\epsilon \rho \nabla f(s))+\epsilon \rho f^{\prime \prime}(s)|\nabla s|^{2}-\epsilon f^{\prime}(s) \nabla \rho \cdot \nabla s \\
+J_1 f^{\prime}(s)=f^{\prime}(s) s_{e} \polG_{\bu}: \nabla \boldsymbol{u} + f^{\prime}(s) s_{e}\polG_{\bB}:\nabla\bB.
\end{array}
\]
Multiplying the density equation \eqref{eq:con_rho} with $\rho$ and adding it to the above equation, the product rule for temporal and spatial derivatives gives
\[
\begin{aligned}
\p_t(\rho f(s))&+\nabla \cdot(\boldsymbol{u} \rho f(s)) -\nabla \cdot(\epsilon \rho \nabla f(s)+\epsilon f(s) \nabla \rho) \\
&+\epsilon\rho f^{\prime \prime}(s)|\nabla s|^{2}+J_1 f^{\prime}(s)=f^{\prime}(s) s_{e} \polG_{\bu}: \nabla \bu + f^{\prime}(s) s_{e}\polG_{\bB}:\nabla\bB,
\end{aligned}
\]
which is close to the inequality that we want to prove. Since we know that the right hand side of the above equation is nonnegative, if $\epsilon\rho f^{\prime \prime}(s)|\nabla s|^{2}+J_1 f^{\prime}(s) \leq 0$ holds, then the proof is complete. Denote $J_2:= \epsilon\rho c_p^{-1}|\nabla s|^2+J_1$. Using the second inequality of \eqref{eq:convex_entropy_inequalities}, we can derive an upper bound of the quantity of interest $\epsilon\rho f^{\prime \prime}(s)|\nabla s|^{2}+J_1 f^{\prime}(s) < f'(s)\left(\epsilon\rho c_p^{-1}|\nabla s|^2+J_1\right)=f'(s)J_2$. Using the chain rule \eqref{eq:s_chain_rule} on $\nabla s$ and the matrix form of $J_1$ in \eqref{eq:J_matrix_form}, we have
\[
J_2=  \epsilon
\begin{pmatrix}
  \nabla\rho\\
  \nabla e
\end{pmatrix}
\left(
\begin{pmatrix}
c_p^{-1}\rho s_\rho^2+\rho^{-1}\p_{\rho}(\rho^2 s_{\rho}) & c_p^{-1}s_\rho\rho s_e+\rho s_{\rho e} \\
c_p^{-1}\rho s_\rho s_e+\rho s_{\rho e} & c_p^{-1}\rho s_e^2+ \rho s_{ee}  
\end{pmatrix}
\otimes
\polI_d
\right)
\begin{pmatrix}
\nabla\rho & \nabla e
\end{pmatrix}.
\]
Due to Lemma \ref{lemma:M_negative_definite}, $J_2$ is always nonpositive. Therefore, $\epsilon\rho f^{\prime \prime}(s)|\nabla s|^{2}+J_1 f^{\prime}(s) \leq 0$, which proves the theorem.
\end{proof}

\cred{
\begin{remark}[Entropy principles in the fully discrete settings]
  In the above analysis, entropy principles such as density positivity, internal energy, and entropy minimum principles are proved at the PDE level when $\DIV \bB \equiv 0$. However, this condition is usually violated in fully discrete approximations. The authors of \cite{Janhunen_2000, Christlieb_2015} showed that a slight violation of the divergence-free condition leads to a violation of the positivity property. Recently \cite{Wu_2018b, Wu_2021} proposed to use the Godunov form of ideal MHD instead, where the so-called Powell terms \citep{Powell_et_al_1991} are added to the system. A good feature of the Godunov form is that a slight violation of the divergence-free condition does not prevent the positivity property of the MHD. Then the authors \cite{Wu_2018b, Wu_2021} proved the entropy principles both at the PDE level and at the fully discrete level for some DG schemes. Extending these works within finite elements is our ongoing work and will be reported in a separate article.
 \end{remark}
}
\section{Numerical investigation}\label{Sec:numerical_investigation}
In this section, we demonstrate some numerical properties of the monolithic parabolic flux \eqref{eq:con_rho}-\eqref{eq:con_B} using a robust shock-capturing CG method.
\subsection{Finite element (CG) discretization}
For computation, instead of considering the unbounded spatial domain $\mR^d$ in the main analysis, we consider an open bounded subset $\Omega \in \mR^d$. The domain $\Omega$ is discretized into $N_e$ disjoint triangle elements $K_i, i=1,\dots,N_e$ being open sets in $\mR^d$ such that $\cup \{\overline{K_i}\}_{i=1}^{N_e} = \overline{\Omega}$, where $\overline{K_i}$ is the closure of $K_i$, and all vertices of the polytope $\overline{\Omega} \approx \Omega$ are contained by the boundary $\p\Omega$ of $\Omega$. For a valid CG discretization, we require that no hanging nodes are present, i.e., no vertices of any element lie on an edge of any other element. The set of all vertices and all elements of this partition constitutes the computational mesh $\calT_h$.

We define a continuous Lagrange finite element function space $\calQ_h$ as
\[
\calQ_h := \{ v \in C^{0}(\overline{\Omega}) \;\big\vert\; v_{|_{K_i}} \in \polP_k(K_i), \,\forall i=1,\dots,N_e \},
\]
where $C^{0}(\overline{\Omega})$ is the space of continuous functions on $\overline{\Omega}$, and $\polP_k(K_i)$ is the space of polynomials of at most $k$-th degree on $K_i$. The corresponding vector function space $\bcalV_h$ is defined as $\bcalV_h:=[\calQ_h]^d$. A mesh-size function $h(x) \in \calQ_h$ is defined through the following projection,
\begin{equation}\label{eq:h_h}
(h,v_h) = \left(\frac{h_K}{k}, v_h\right), \,\forall v_h \in\calQ_h,
\end{equation}
where the $L^2$--inner product $(u,v)\equiv(u,v)_{\Omega}:=\int_{\Omega} uv\ud x$ is defined for real-valued functions $u,v$ in $L^2(\Omega)$, and $h_K$ is the circumradius of element $K$.

A weak formulation of the regularized ideal MHD equations \eqref{eq:mhd_monolithic} reads: find $\bsfU_h(t):=(\rho_h(t),\bbm_h(t),E_h(t),\bB_h(t))^\top \in \calC^1(\mR^+,\calQ_h\times\bcalV_h\times\calQ_h\times\bcalV_h)$ such that
\begin{equation}\label{eq:ode}
\begin{aligned}
(\p_t \bsfU_h,\bsfV_h) + (\DIV \bsfF_{\calE}(\bsfU_h),\bsfV_h) + (\DIV \bsfF_{\calB}(\bsfU_h),\bsfV_h) & \\
  + (\bsfF_{\calV}^m(\bsfU),\nabla\bsfV_h) - (\bn\SCAL\bsfF_{\calV}^m(\bsfU),\bsfV_h)_{\p\Omega} & = 0,
\end{aligned}
\end{equation}
for all test functions $\bsfV_h\in\calQ_h\times\bcalV_h\times\calQ_h\times\bcalV_h$, where the boundary inner product $(u,v)_{\p\Omega}=\int_{\p\Omega}uv\ud s$ is a surface integral, and the vector $\bn$ is the pointing-outward normal vector defined at every nodal point on the boundary $\p\Omega$. Solution of the system \eqref{eq:ode} is often said to be a viscous solution of the ideal MHD system \eqref{eq:mhd_monolithic}. The viscosity coefficient $\epsilon$ is constructed such that it vanishes with mesh refinement. Therefore, as $h\to 0$, the viscous solution of \eqref{eq:ode} converges to the weak solution of \eqref{eq:mhd_monolithic}.

Our code is implemented in FEniCS, an open source finite element library, see \cite{Logg_2012}.
\subsection{Time stepping}
To proceed in time, we solve the ODE \eqref{eq:ode} using the strong stability preserving Runge-Kutta schemes of order 3 when $\polP_1$ elements are used in space and order 4 when $\polP_3$ elements are used in space, see \cite{Ruuth_2006}.

The time step size is adaptively chosen following a CFL condition,
\[
\Delta t = \text{CFL} \left(\min_{\bx\in\Omega} h({\bx})\right) \left(\max_{i=1,8}\vert\Lambda_i\vert\right)^{-1},
\]
where $\Lambda_i$ corresponds to the $i$-th eigenvalue of the MHD system, see \cite[Equation (3.5)]{Dao_2021}. In the following tests, the CFL number is chosen to be $0.3$.
\subsection{Divergence cleaning}
Satisfying the divergence-free condition \eqref{eq:div0} has been known as a challenging task in numerically solving MHD. However, due to being not of the main focus, we use the simplest divergence cleaning method for the numerical demonstration in this paper: the projection method \citep{Brackbill_Barnes_1980}. \cblue{In each Runge-Kutta stage}, the following cleaning procedure is applied:
\begin{enumerate}
\item Solve the Poisson equation $\LAP\Psi_h-\DIV\bB_h = 0$ for $\Psi_h \in \calQ_h$.
\item Calculate the projection $\bB_h'$ of the magnetic field $\bB_h$ onto the divergence-free space: $\bB_h'=\bB_h-\nabla\Psi_h$.
\item Use $\bB_h'$ as the magnetic field solution to proceed in the next time step, and update the dependent numerical variables accordingly to ensure consistency: pressure, temperature, energy, and other entropy-related variables.
\end{enumerate}
Despite the seemingly ad-hoc nature, in \cite{Toth_2000}, conservation and accuracy preserving properties of the projection method is proved.
\subsection{Boundary conditions}
In the benchmark tests in this paper, we use two basic types of boundary conditions: one is Dirichlet, and the other is the periodic boundary. The Dirichlet boundary conditions are injected into the solution vector in each time step. The periodic boundary mapping is done via built-in functions in FEniCS, see \cite{Logg_2012}.
\subsection{Comparison with the resistive MHD flux}
\label{sec:ns-visc}
It is natural and reasonable to use the resistive MHD flux, similar to using the Navier-Stokes flux for the compressible Euler equations.
We demonstrate why using the resistive MHD flux is not suitable for artificial viscosity methods.

The resistive model of the ideal MHD equations are obtained by replacing the monolithic flux $\bsfF_{\calV}^m(\bsfU)$ in \eqref{eq:mhd_monolithic} with the viscous flux $\bsfF_{\calV}^r(\bsfU)$ which we call ``resistive MHD flux'', see e.g., \cite{Bohm_2018},
\begin{equation}\label{eq:resistive_mhd_flux}
\bsfF_{\calV}^r(\bsfU):=
\begin{pmatrix}
0 \\
\btau \\
\bu \SCAL \btau + \kappa \GRAD T + \eta\bB \SCAL \big( \GRAD \bB - \GRAD \bB^\top \big)\\
\eta \big( \GRAD \bB - \GRAD \bB^\top \big)
\end{pmatrix},
\end{equation}
where the viscous shear stress tensor $\btau$ is
\[
\btau = \mu \Big( \GRAD\bu + \GRAD\bu^\top\Big) - \lambda \DIV\bu\polI,
\]
and $\kappa, \eta, \mu, \lambda$ are different viscosity coefficients. The sign of $\lambda$ is not determined. In applications, $\lambda$ is often neglected, or is set as $\lambda=-\frac{2}{3}\mu$.
\subsubsection{An example: contact waves}\label{sec:example_contact_wave}
We consider a contact line problem as an example: there is a discontinuity in the density, but the velocity, pressure, and magnetic field are constant functions. Let $\rho(\bx,0)=\rho_0(\bx)$ be an initial density field containing the contact discontinuity. We can verify that given some uniform fields $\bu(\bx,t) = \bu_0, \bB(\bx,t) = \bB_0,p(\bx,t) = p_0$, where the functions $\bu_0, \bB_0,p_0$ are constant, if a density solution $\rho(\bx,t)$ solve the mass equation
\begin{equation}\label{eq:example_contact_rho}
\p_t\rho + \DIV(\rho\bu_0) = \epsilon \LAP\rho,
\end{equation}
then $\bu_0,\bB_0,p_0,\rho$ also solve the momentum, energy, and magnetic equations. Indeed, when the velocity, pressure, and magnetic solutions are constant, the momentum equation \eqref{eq:con_m}, and the magnetic equation \eqref{eq:con_B} follows trivially. Inserting $\bu_0, \bB_0,p_0, \rho$ into the energy equation \eqref{eq:con_E}, we have
\begin{equation}\label{eq:example_contact_E}
\p_t(\rho e)+\DIV(\bu_0\rho e)-\epsilon\LAP(\rho e) + \frac{1}{2}\bu_0^2\left(\p_t\rho+\DIV(\rho\bu_0)-\epsilon\LAP\rho\right) = 0.
\end{equation}
For ideal gas, the equation of state \eqref{eq:ideal_pressure} implies that $\rho e = p_0/(\gamma-1)$ is a constant function. Combining this fact with \eqref{eq:example_contact_rho}, we can see that \eqref{eq:example_contact_E} is fulfilled. Therefore, the monolithic parabolic flux is compatible with contact lines.

The same conclusion cannot be drawn about the resistive MHD flux. We make the same assumptions as above on velocity, pressure, magnetic field, and initial density solution. A density solution $\rho(\bx, t)$ to the MHD equations regularized by the resistive MHD flux satisfies
\begin{equation}\label{eq:example_contact_rho_resistive}
\p_t\rho + \DIV(\rho\bu_0) = 0.
\end{equation}
In a similar manner, the momentum equation \eqref{eq:con_m}, and the magnetic equation \eqref{eq:con_B} follows trivially. However, inserting $\bu_0, \bB_0,p_0, \rho$ into the energy equation \eqref{eq:con_E} gives
\begin{equation}\label{eq:example_contact_E_resistive}
\p_t(\rho e)+\DIV(\bu_0\rho e)+ \frac{1}{2}\bu_0^2\left(\p_t\rho+\DIV(\rho\bu_0)\right) + \DIV(\kappa p_0\rho^{-2}\nabla\rho)= 0.
\end{equation}
The equations \eqref{eq:ideal_pressure} and \eqref{eq:example_contact_rho_resistive} imply that \eqref{eq:example_contact_E_resistive} only holds if the thermal diffusivity is zero, i.e., $\kappa = 0$. For the compressible Euler equations, letting $\kappa = 0$ in \eqref{eq:resistive_mhd_flux} is known to lead to Gibbs phenomenon to the numerical solution, see \citep{Nazarov_Larcher_2017}. This argument suggests that the resistive MHD flux is not compatible with contact lines.

We numerically demonstrate this argument on a contact solution extracted from a Riemann solver \citep{Torrilhon_2002} of the Brio-Wu problem \citep{Brio_Wu_1988}. The spatial domain is one dimensional $\Omega = [0,1]$. The gas constant is $\gamma=2$. The initial solution contains a constant velocity field $\bu=(u_x,u_y)$, $u_x=0.5915470932$, $u_y=-1.5792628803$, constant pressure $p=0.5122334291$, and a constant magnetic field $\bB = (B_x,B_y)$, $B_x=0.75$, $B_y=-0.5349102426$. There is a discontinuity in the density
\[
\rho =
\begin{cases}
0.7156521382, & \bx\in[0,0.5], \\
0.2348529760, & \bx\in(0.5,1].
\end{cases}
\]
In one dimension, the divergence-free condition \eqref{eq:div0} reduces to $\p_x B_x = 0$, which means that $B_x$ needs to remain constant across $\Omega$ at all future time. Since the violation of this condition in the numerical approximations of the Brio-Wu solution is typically negligible, the divergence of $B_x$ is left untreated. At every nodal point, the viscosity coefficients $\epsilon, \mu, \nu$ are chosen to be $\frac{1}{2}h\max_{i=1,8}\vert\Lambda_i\vert$, $h$ is calculated by \eqref{eq:h_h}, and $\lambda$ is set to $0$. The monolithic flux with the mentioned choice of $\epsilon$ resembles the Lax-Friedrichs scheme or the upwind scheme. The contact line at time $\widehat t = 0.1$ is captured by the monolithic flux and the resistive MHD flux in Figure \ref{fig:briowu_contact} under multiple resolution levels. It can be seen that the monolithic flux can capture the contact line without undershoots and overshoots. This is not the case for the resistive MHD flux. Choosing either $\kappa=0$ or $\kappa=1$ leads to overshoots and undershoots in the numerical solutions. Increasing or decreasing $\kappa$ from the standard value $\kappa=1$ does not help with the situation.

\begin{figure}[!ht]
     \centering
     \begin{subfigure}{\textwidth}
         \centering
         \includegraphics[width=0.49\textwidth,viewport=100 580 480 775, clip=true]{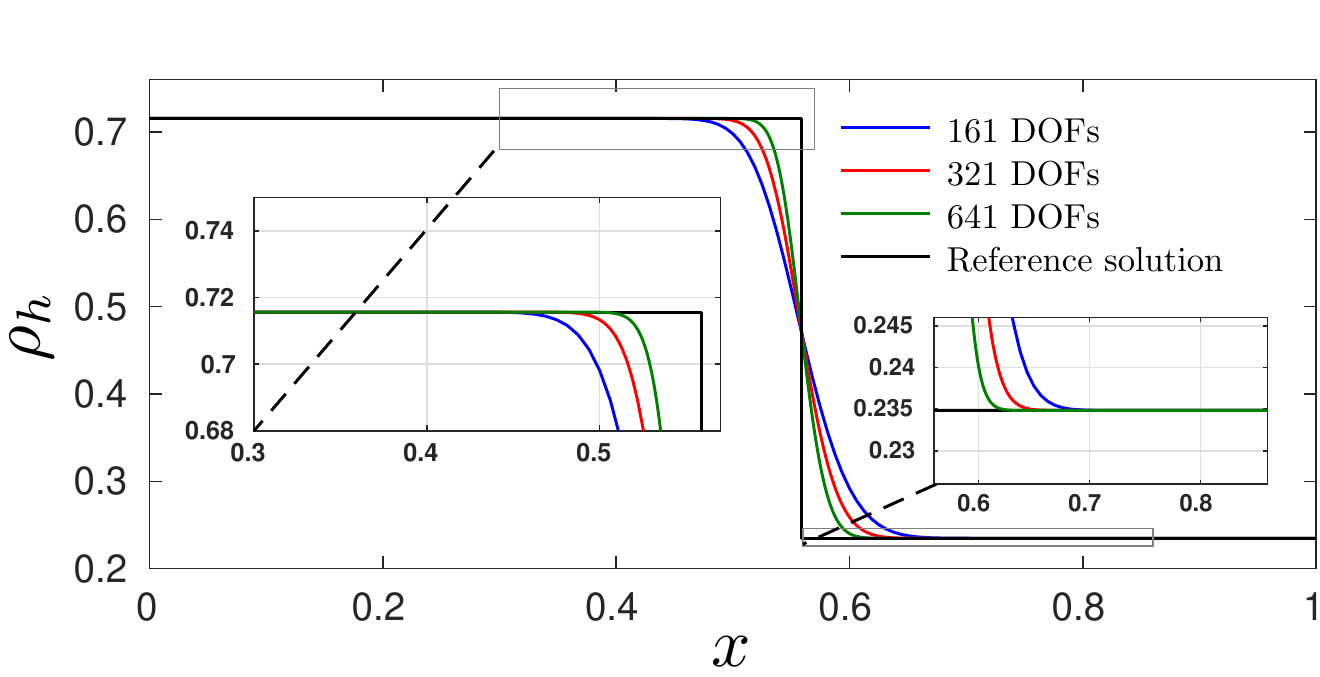}
         \caption{Monolithic flux}
     \end{subfigure}
     \hfill
     \begin{subfigure}{0.49\textwidth}
         \centering
         \includegraphics[width=\textwidth,viewport=100 580 480 775, clip=true]{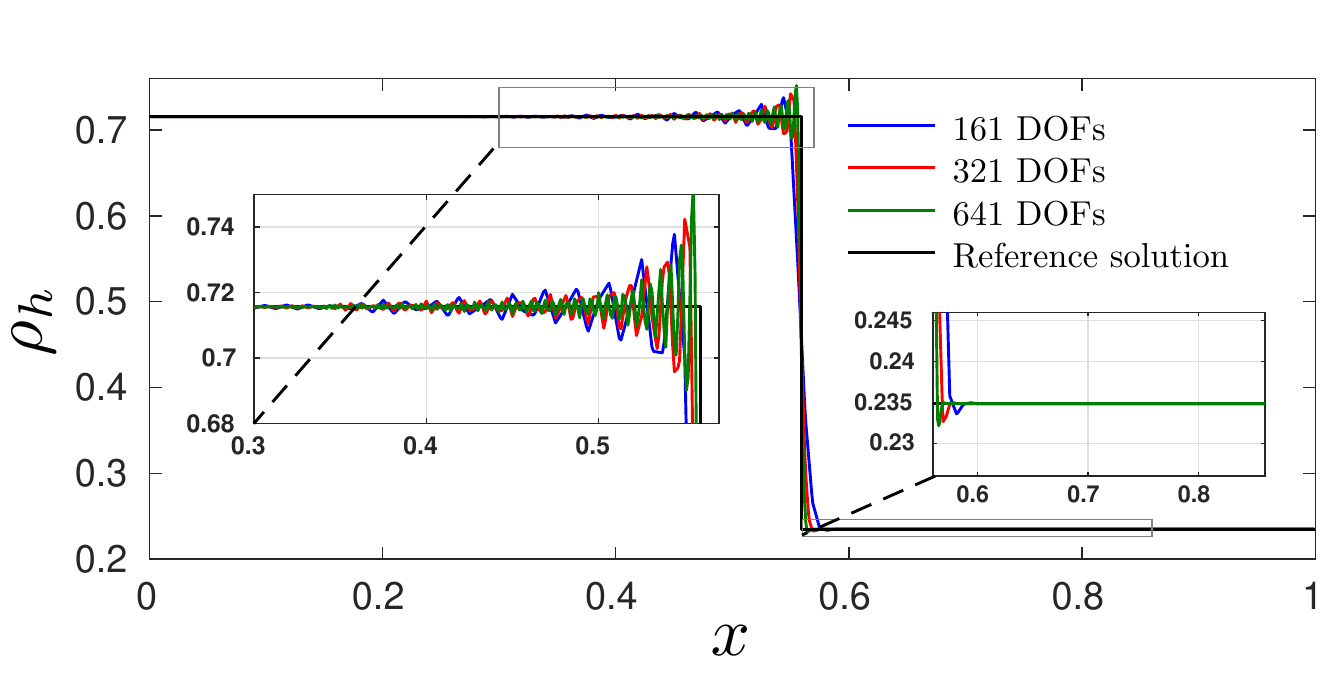}
         \caption{Resistive MHD flux, $\kappa=0$}
     \end{subfigure}
     \hfill
     \begin{subfigure}{0.49\textwidth}
         \centering
         \includegraphics[width=\textwidth,viewport=100 580 480 776, clip=true]{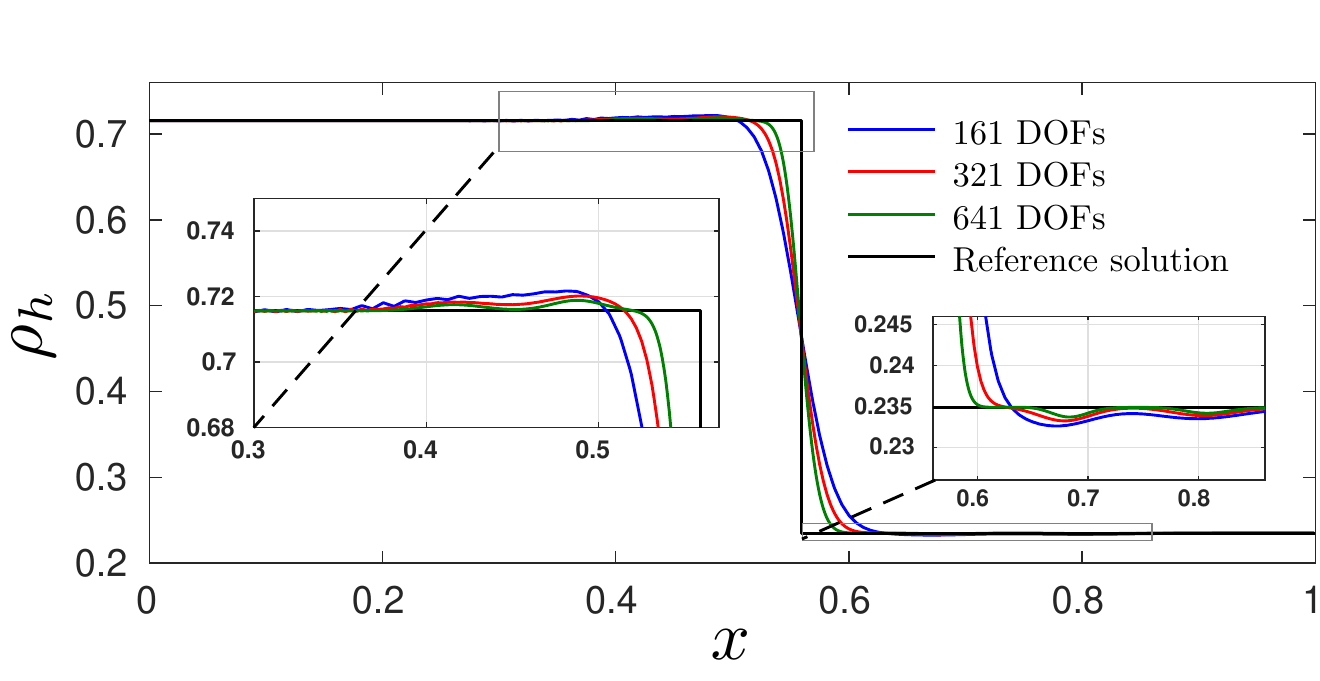}
         \caption{Resistive MHD flux, $\kappa=1$}
     \end{subfigure}
     \hfill
     \begin{subfigure}{0.49\textwidth}
         \centering
         \includegraphics[width=\textwidth,viewport=100 580 480 776, clip=true]{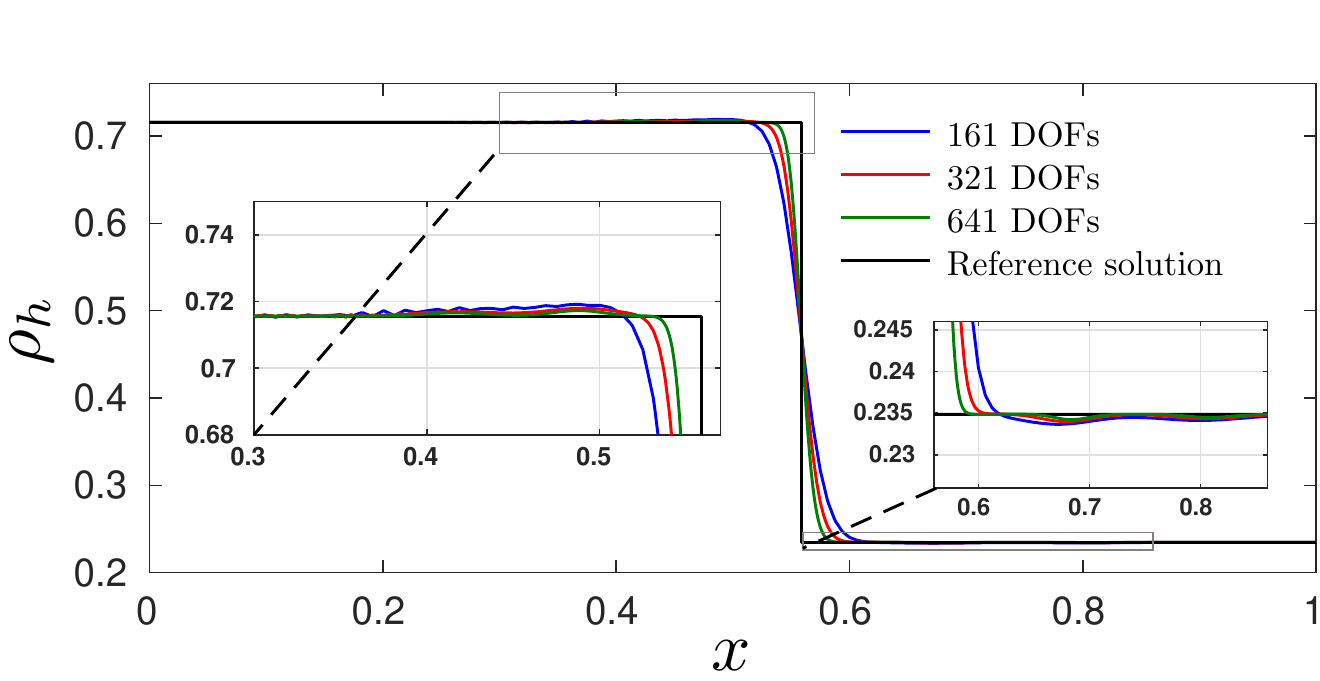}
         \caption{Resistive MHD flux, $\kappa=\frac{1}{2}$}
     \end{subfigure}
     \hfill\hfill
     \begin{subfigure}{0.49\textwidth}
         \centering
         \includegraphics[width=\textwidth,viewport=100 580 480 776, clip=true]{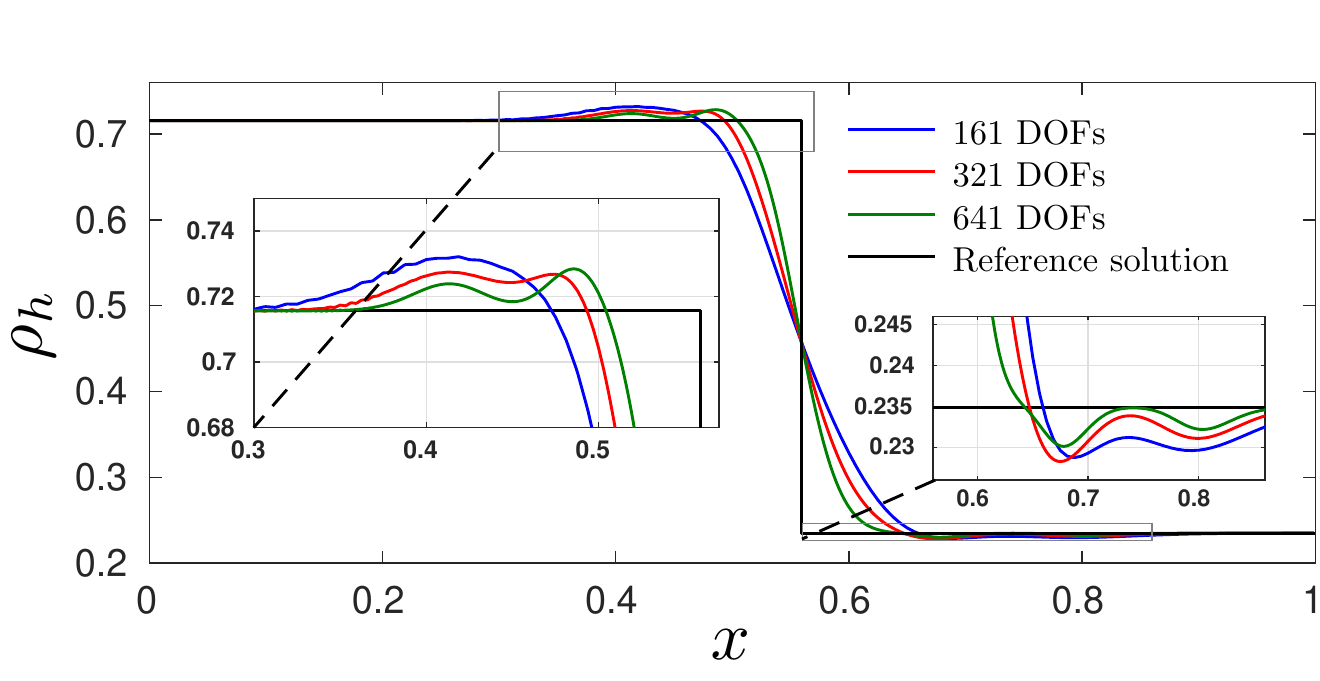}
         \caption{Resistive MHD flux, $\kappa=5$}
     \end{subfigure}
     \caption{Single contact solution captured by monolithic flux and resistive MHD flux with different choices of the thermal diffusivity coefficient $\kappa$. The solutions are shown under multiple resolution levels: 161, 321, 641 DOFs. First order viscosity. Final time $\widehat t = 0.1$.}
     \label{fig:briowu_contact}
\end{figure}

\subsubsection{Single waves from Brio-Wu problem \citep{Brio_Wu_1988}}\label{sec:single_waves}

The Brio-Wu problem is a one dimensional Riemann problem, $\Omega = [0, 1]$. The initial profile is given by
\[
(\rho, u, p, B_x, B_y) =
\begin{cases}
(1,0,1,0.75,1), & \bx\in[0,0.5], \\
(0.125,0,0.1,0.75,-1), & \bx\in(0.5,1].
\end{cases}
\]
The adiabatic constant is $\gamma=2$. The well-known Brio-Wu problem is an essential yet challenging test to examine if a numerical method can capture different MHD wave structures accurately: the shocks, the rarefactions, and the contact lines. The density solutions at the final time $\hat t = 0.1$ comparing monolithic flux and resistive MHD flux are shown in Figure~\ref{fig:briowu} with different mesh resolutions. The viscosity coefficients $\epsilon, \mu, \nu,\lambda$ are calculated at nodal points same to Section \ref{sec:example_contact_wave}. Similar to the previous example, no divergence cleaning procedure is used for the current one dimensional tests. Visualization of density solution in Figure~\ref{fig:briowu} shows that using the resistive MHD flux when $\kappa=1$ can be numerically sufficient to capture the whole compound structure of the Brio-Wu solution. We show the result by the resistive MHD flux with $\kappa=0$ in Figure~\ref{fig:briowu}(b), which clearly shows that the solution is polluted by spurious oscillations when the thermal diffusivity is zero. We also test an interesting setting of the monolithic flux when we drop the regularization to the mass equation, i.e., set $\epsilon\LAP\rho$ to zero in $\bsfF_{\calV}^m(\bsfU)$, and show the result in Figure~\ref{fig:briowu}(c). Under this setting, the compound structure of the Brio-Wu solution is still captured without the unphysical oscillations, as opposed to the intuition that mass regularization is the key to eliminating this phenomenon. In contrast to the distinguishable behaviors resulting from the two fluxes in Figure~\ref{fig:briowu_contact}, due to the combination of different waves, one can hardly decide whether the monolithic flux in Figure~\ref{fig:briowu}(a) or the resistive MHD flux with $\kappa=1$ in Figure~\ref{fig:briowu}(d) is better than the other.

\begin{figure}[h!]
     \centering
     \begin{subfigure}{0.49\textwidth}
         \centering
         \includegraphics[width=\textwidth,viewport=100 454 480 780, clip=true]{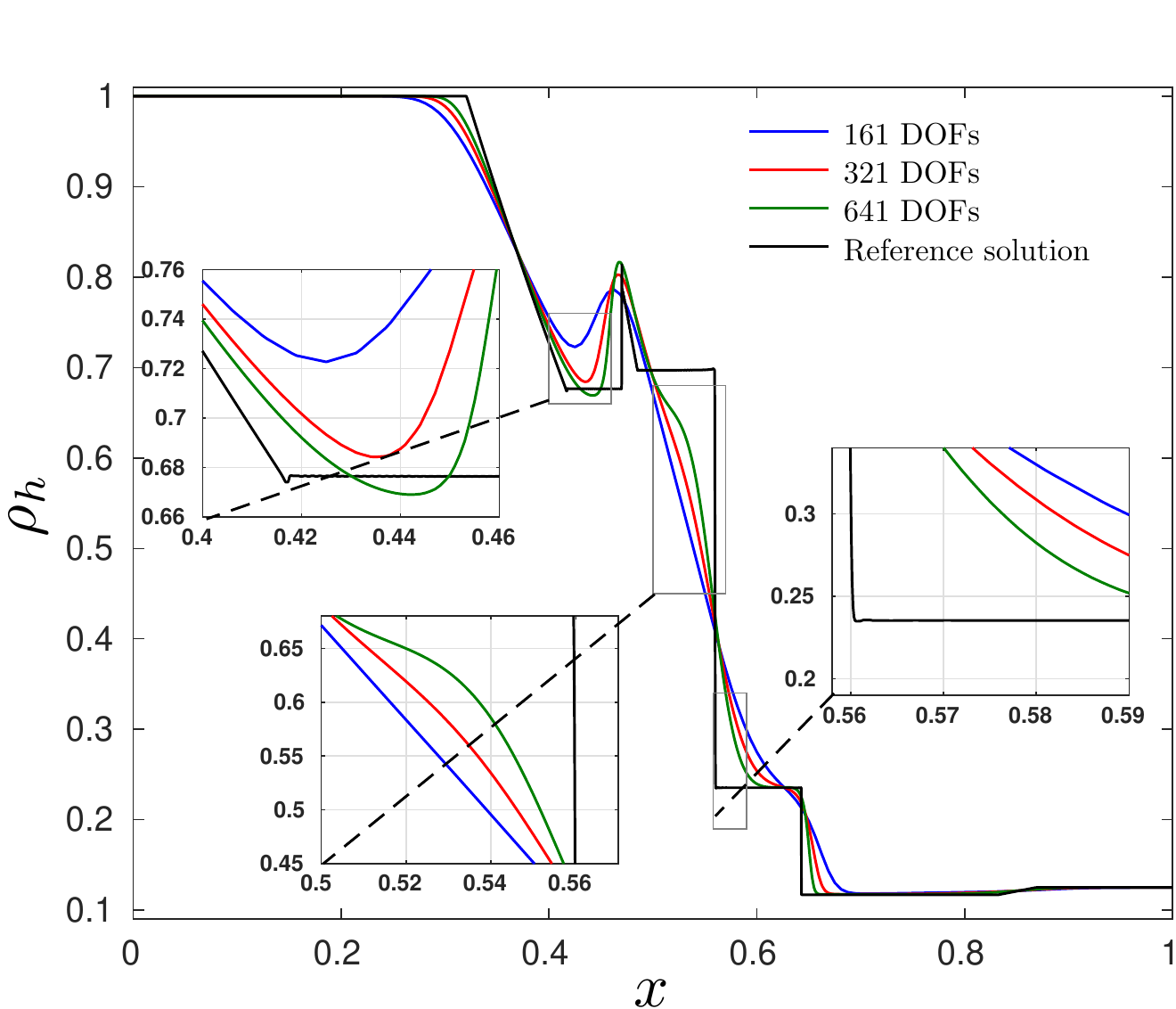}
         \caption{Monolithic flux}
     \end{subfigure}
     \hfill
     \begin{subfigure}{0.49\textwidth}
         \centering
         \includegraphics[width=\textwidth,viewport=100 454 480 780, clip=true]{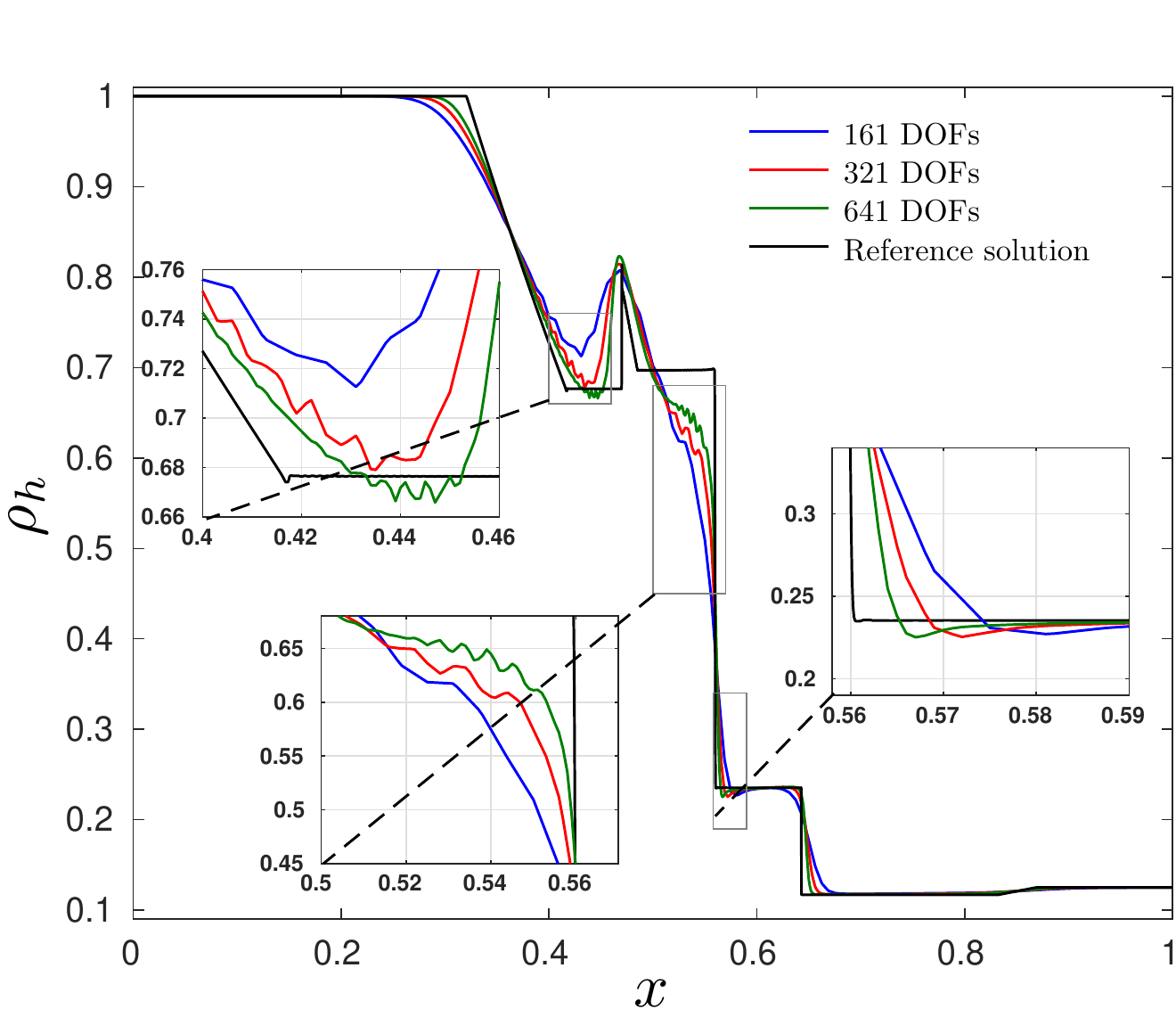}
         \caption{Resistive MHD flux, $\kappa=0$}
     \end{subfigure}
     \hfill
     \begin{subfigure}{0.49\textwidth}
         \centering
         \includegraphics[width=\textwidth,viewport=100 454 480 780, clip=true]{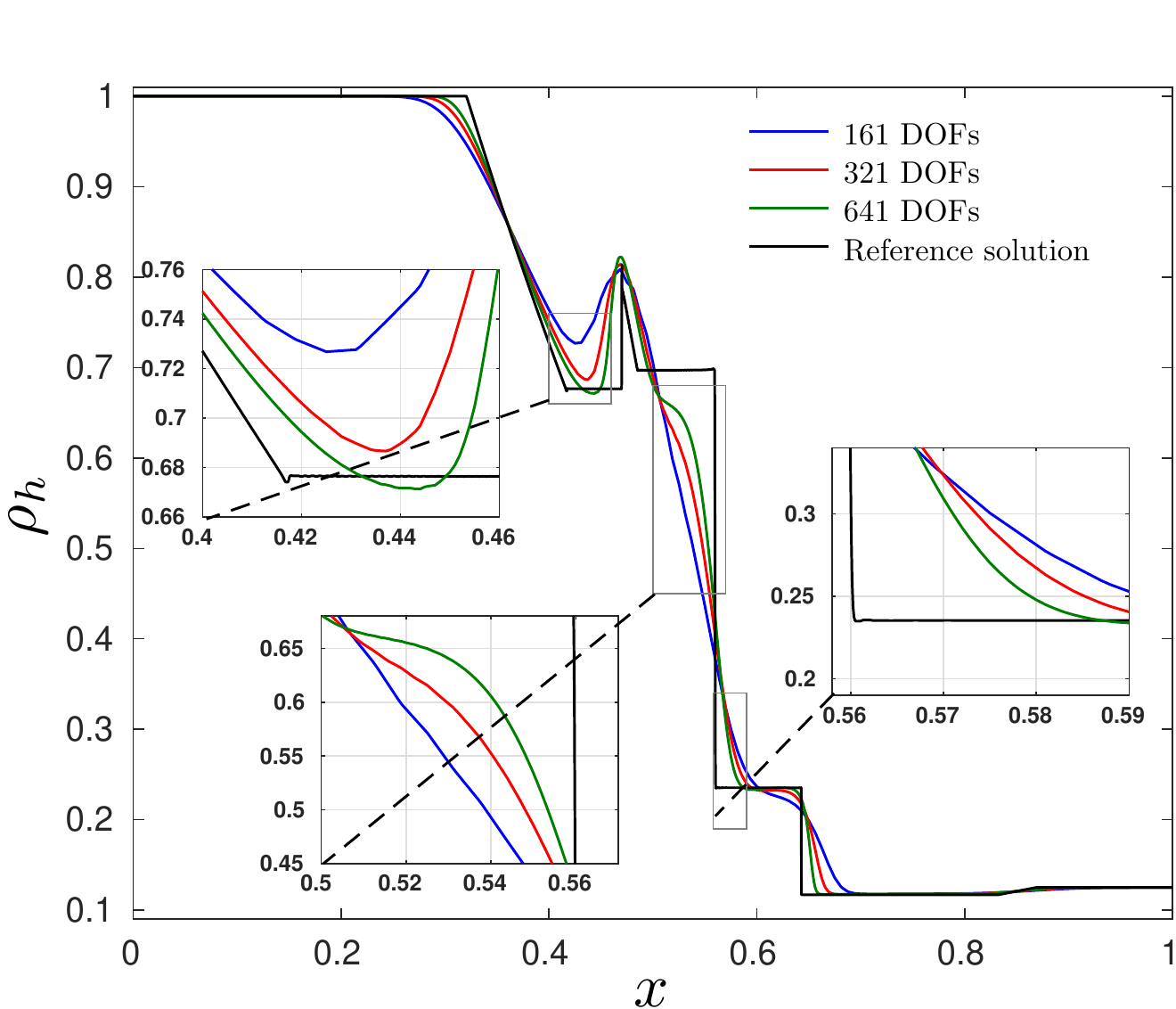}
         \caption{Monolithic flux, no mass regularization}
     \end{subfigure}
     \hfill
     \begin{subfigure}{0.49\textwidth}
         \centering
         \includegraphics[width=\textwidth,viewport=100 454 480 780, clip=true]{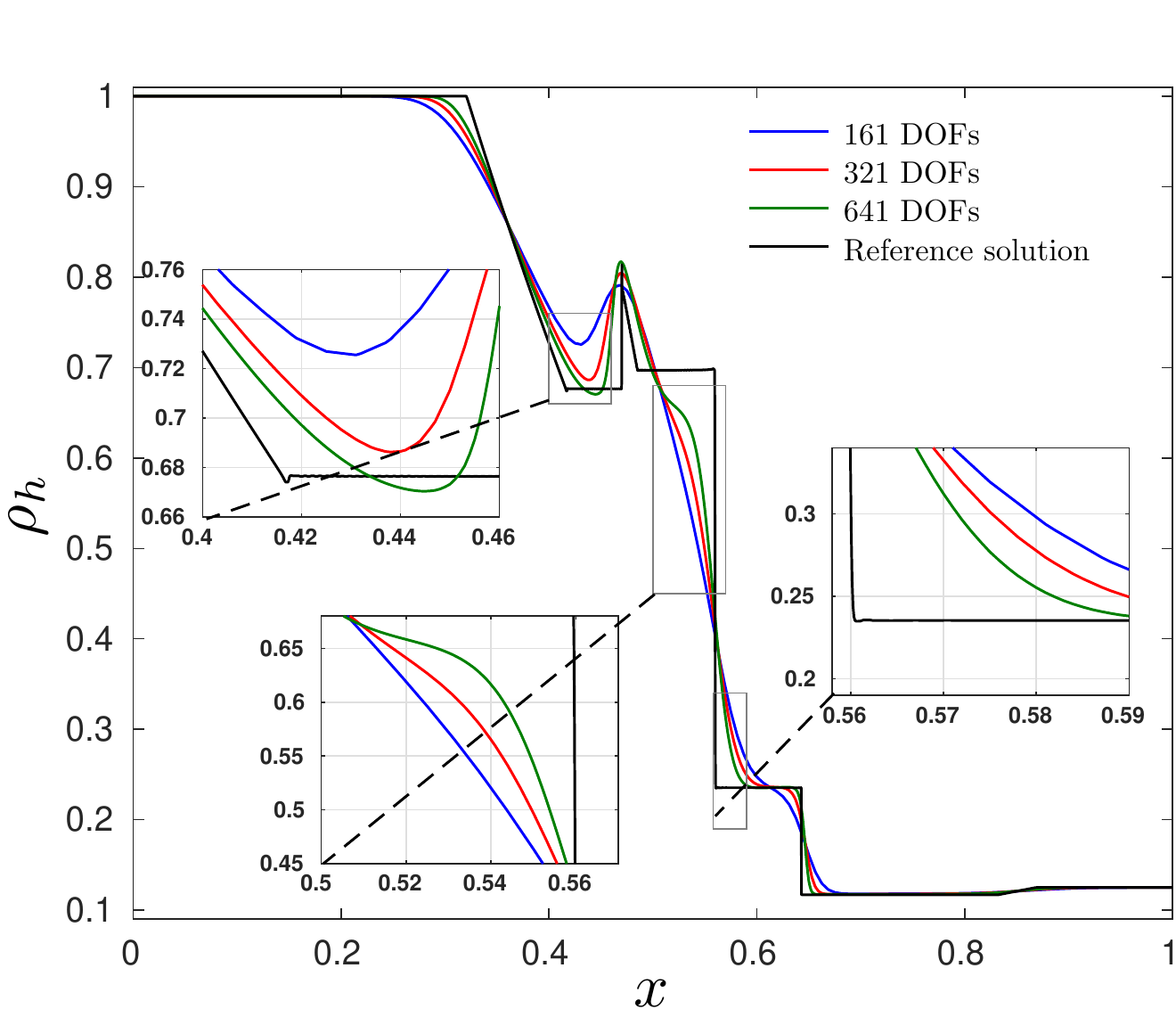}
         \caption{Resistive MHD flux, $\kappa=1$}
     \end{subfigure}
     \caption{The Brio-Wu solutions with 161, 321, 641 DOFs comparing the monolithic flux and the resistive MHD flux with first order viscosity. Final time $\widehat t = 0.1$.}
     \label{fig:briowu}
\end{figure}

Due to the above reason, we separate the waves of the Brio-Wu solution and look into the behavior of each of the single waves. For this purpose, we use an exact Riemann solver by \cite{Torrilhon_2002} to extract the single wave solutions. The initial profiles to generate them: the contact, fast rarefaction, intermediate shock-slow rarefaction, and slow shock are given in Table \ref{table:initial_single_waves}. The results are reported in Figure~\ref{fig:briowu_single_waves}.

\begin{table}[h!]
    \centering
    \caption{Initial solutions for the single waves of the Brio-Wu problem. The solutions are extracted from an exact Riemann solver \cite{Torrilhon_2002}. Entries with ``--'' indicates that the values on the left and the right states are the same.}
    \label{table:initial_single_waves}
    \begin{adjustbox}{max width=\textwidth}
    \begin{tabular}{|c|c|r|r|}\hline
{}  & & Left state & Right state  \\ \hline
\multirow{6}{*}{\rotatebox[origin=c]{90}{Contact}}
& $\rho$ &  0.7156521382 & 0.2348529760 \\
& $u_x$  &  0.5915470932 & -- \\
& $u_y$  & -1.5792628803 & -- \\
& $p$    &  0.5122334291 & -- \\
& $B_x$  &  0.7500000000 & -- \\
& $B_y$  & -0.5349102426 & -- \\ \hline
\multirow{6}{*}{\rotatebox[origin=c]{90}{\footnotesize{Intermediate shock}}}
& $\rho$ & 0.6799272943 & 0.2348529760 \\
& $u_x$  &  0.6288155014 & 0.5915470935 \\
& $u_y$  & -0.2295748706 & -1.5792628801 \\
& $p$    &  0.4623011255 & 0.5122334291 \\
& $B_x$  &  0.7500000000 & -- \\
& $B_y$  & 0.5900487481 & -0.5349102425 \\ \hline
    \end{tabular}
    \begin{tabular}{|c|c|r|r|}\hline
{}  & & Left state & Right state  \\ \hline
\multirow{6}{*}{\rotatebox[origin=c]{90}{Fast rarefaction}}
& $\rho$ &  1.0000000000 & 0.6799272943 \\
& $u_x$  &  0.0000000000 & 0.6288155014 \\
& $u_y$  &  0.0000000000 & -0.2295748706 \\
& $p$    &  1.0000000000 & 0.4623011255 \\
& $B_x$  &  0.7500000000 & -- \\
& $B_y$  &  1.0000000000 & 0.5900487481 \\ \hline
\multirow{6}{*}{\rotatebox[origin=c]{90}{Slow shock}}
& $\rho$ &  0.2348529760 & 0.1168051318 \\
& $u_x$  &  0.5915470930 & -0.2455906431 \\
& $u_y$  & -1.5792628803 & -0.1711653489 \\
& $p$    &  0.5122334291 & 0.0873180084 \\
& $B_x$  &  0.7500000000 & -- \\
& $B_y$  & -0.5349102426 & -0.9001418247 \\ \hline
    \end{tabular}
    \end{adjustbox}
\end{table}

\begin{figure}[h!]
     \centering
     \begin{subfigure}{0.325\textwidth}
         \centering
         \includegraphics[width=\textwidth,viewport=100 454 480 780, clip=true]{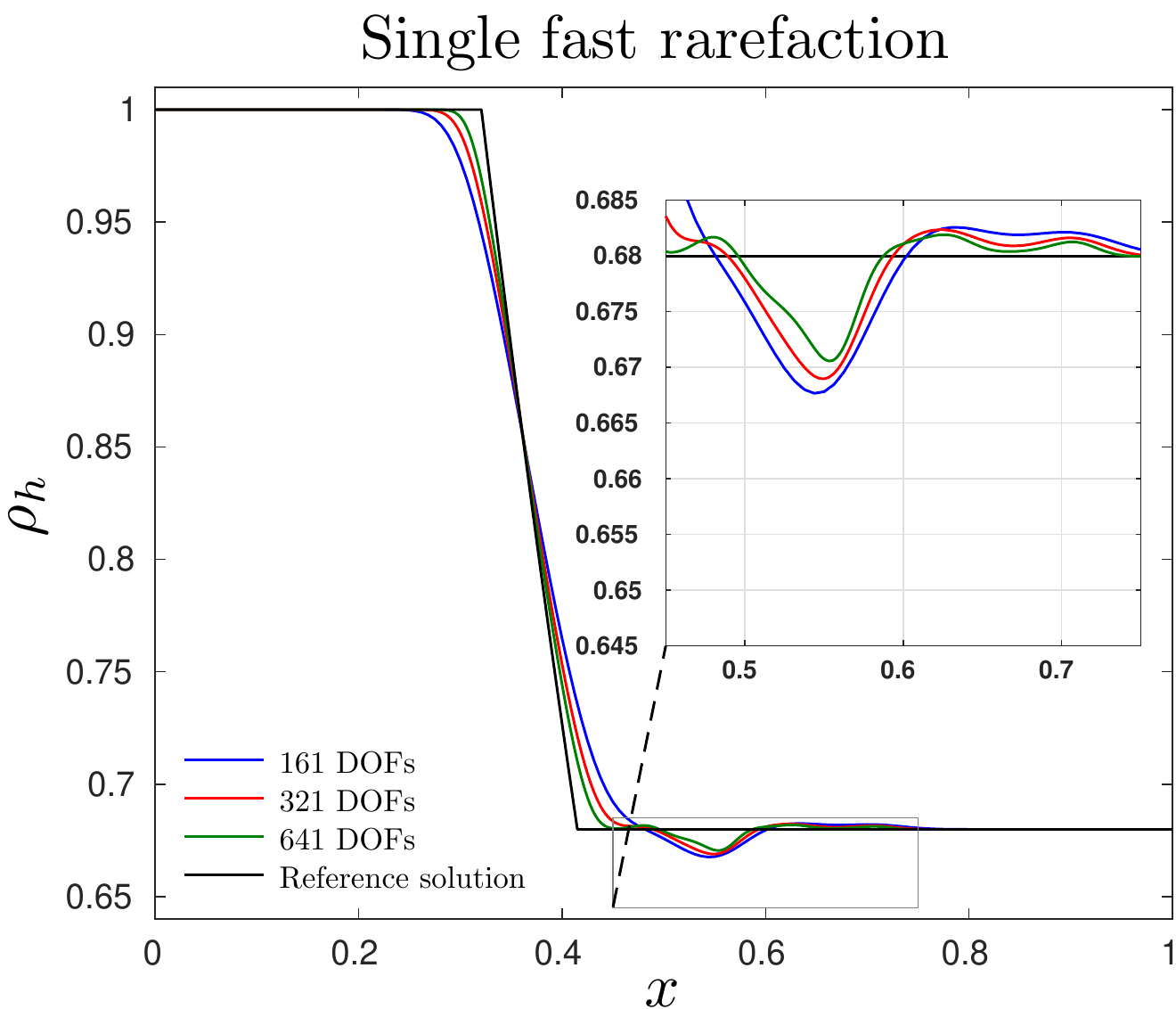}
         \caption{Monolithic}
     \end{subfigure}
     \hfill
     \begin{subfigure}{0.325\textwidth}
         \centering
         \includegraphics[width=\textwidth,viewport=100 454 480 780, clip=true]{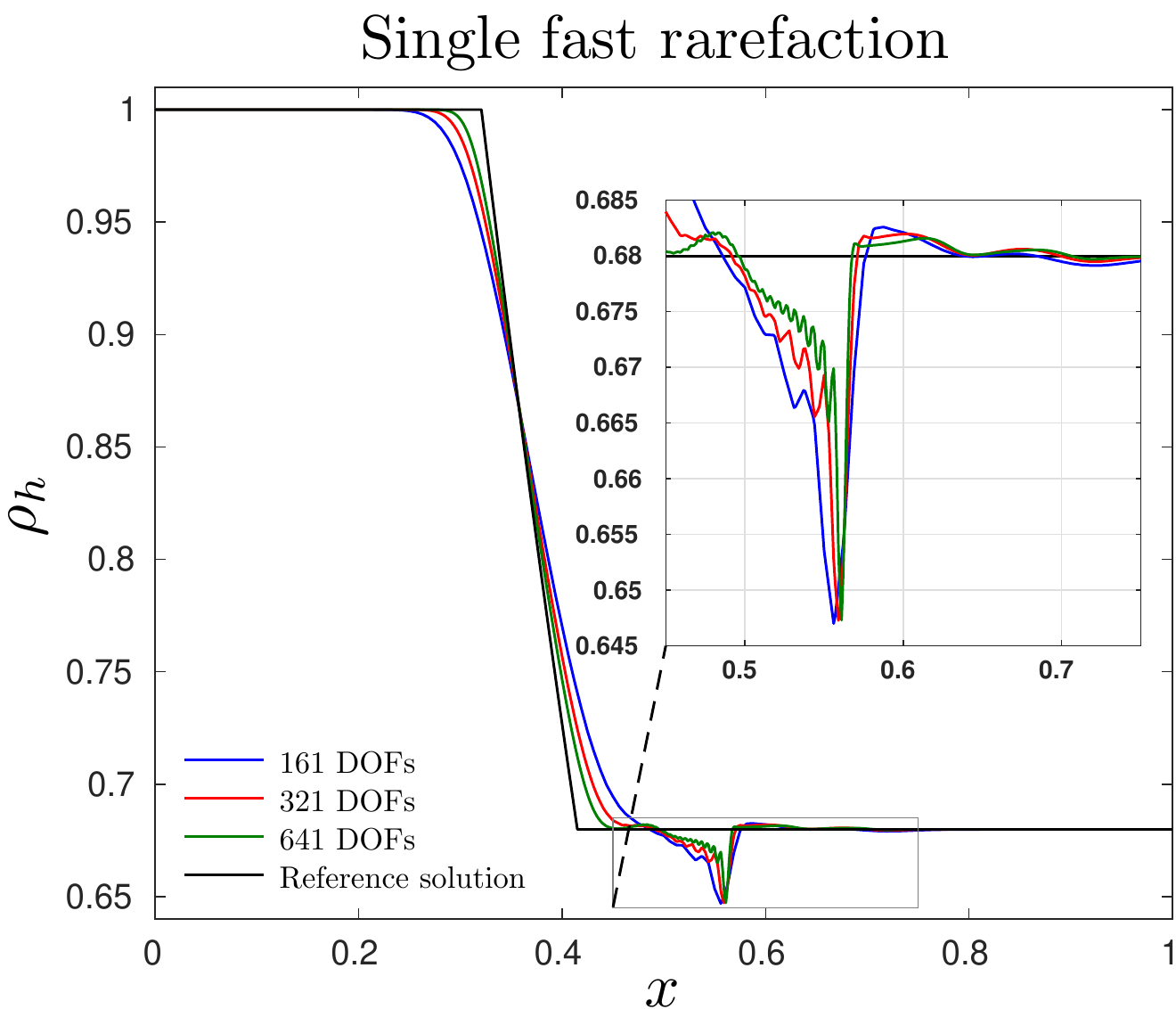}
         \caption{Resistive, $\kappa=0$}
     \end{subfigure}
     \hfill
     \begin{subfigure}{0.325\textwidth}
         \centering
         \includegraphics[width=\textwidth,viewport=100 454 480 780, clip=true]{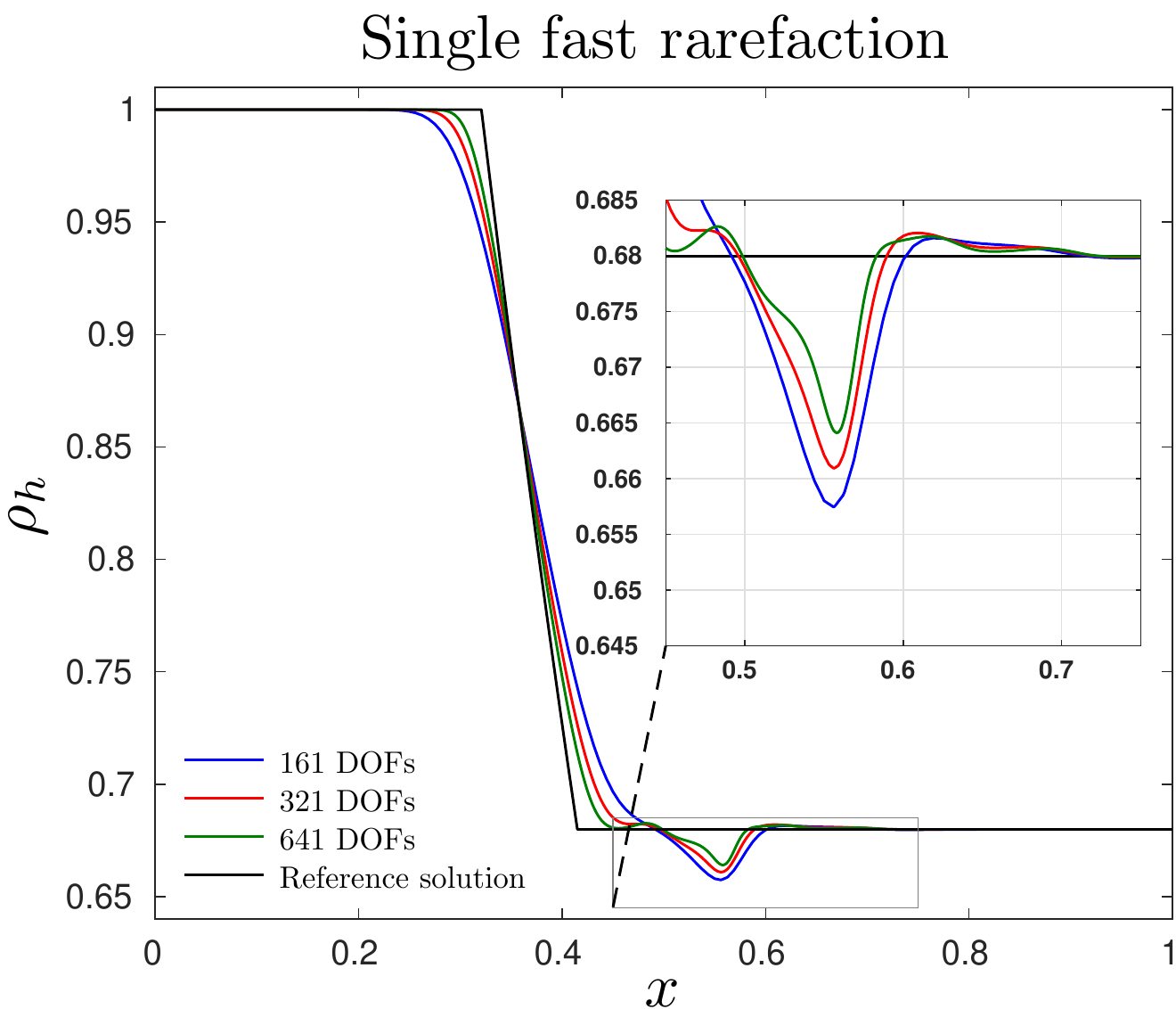}
         \caption{Resistive, $\kappa=1$}
     \end{subfigure}
     \hfill
     \begin{subfigure}{0.325\textwidth}
         \centering
         \includegraphics[width=\textwidth,viewport=100 454 480 780, clip=true]{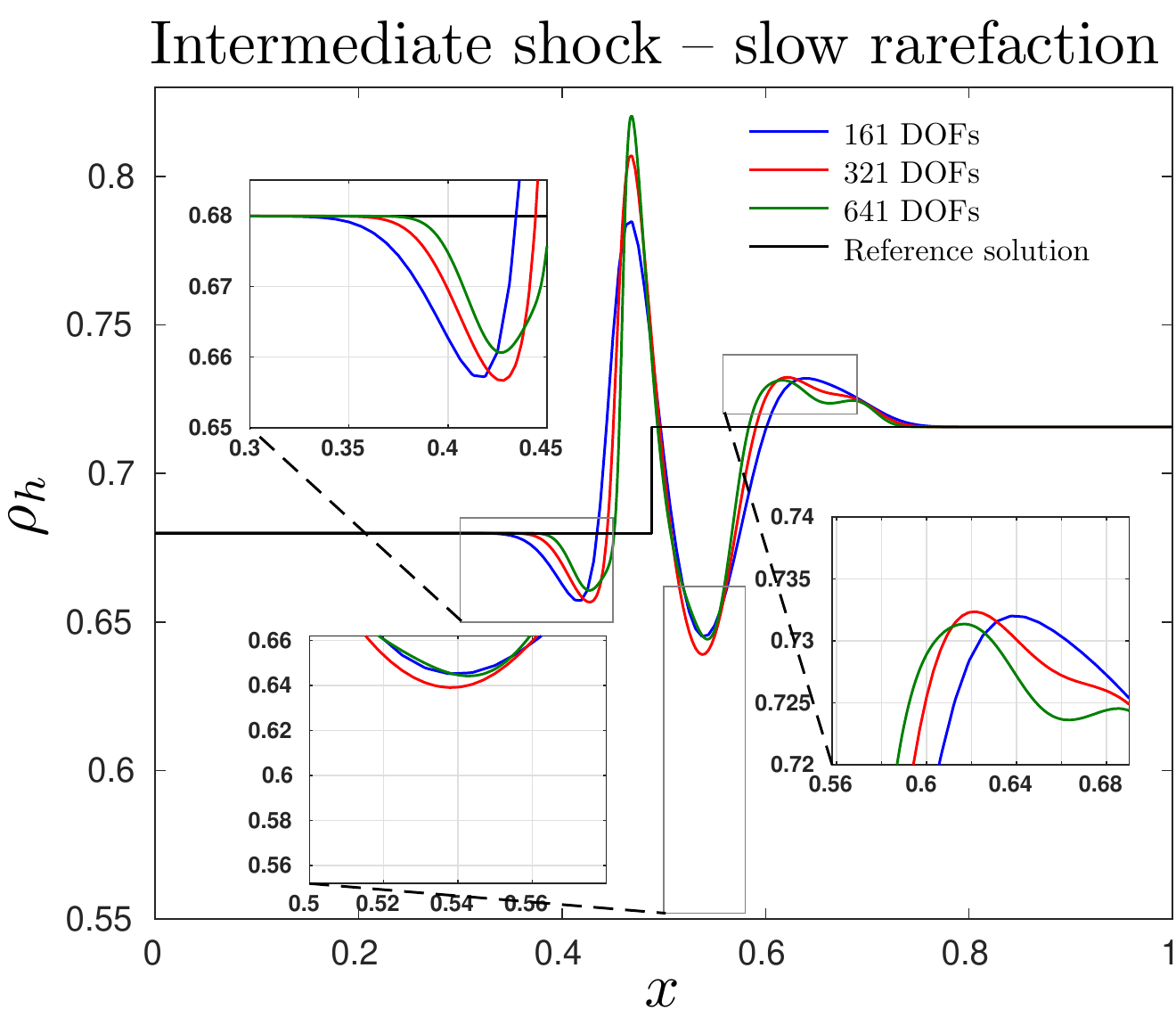}
         \caption{Monolithic}
     \end{subfigure}
     \hfill
     \begin{subfigure}{0.325\textwidth}
         \centering
         \includegraphics[width=\textwidth,viewport=100 454 480 780, clip=true]{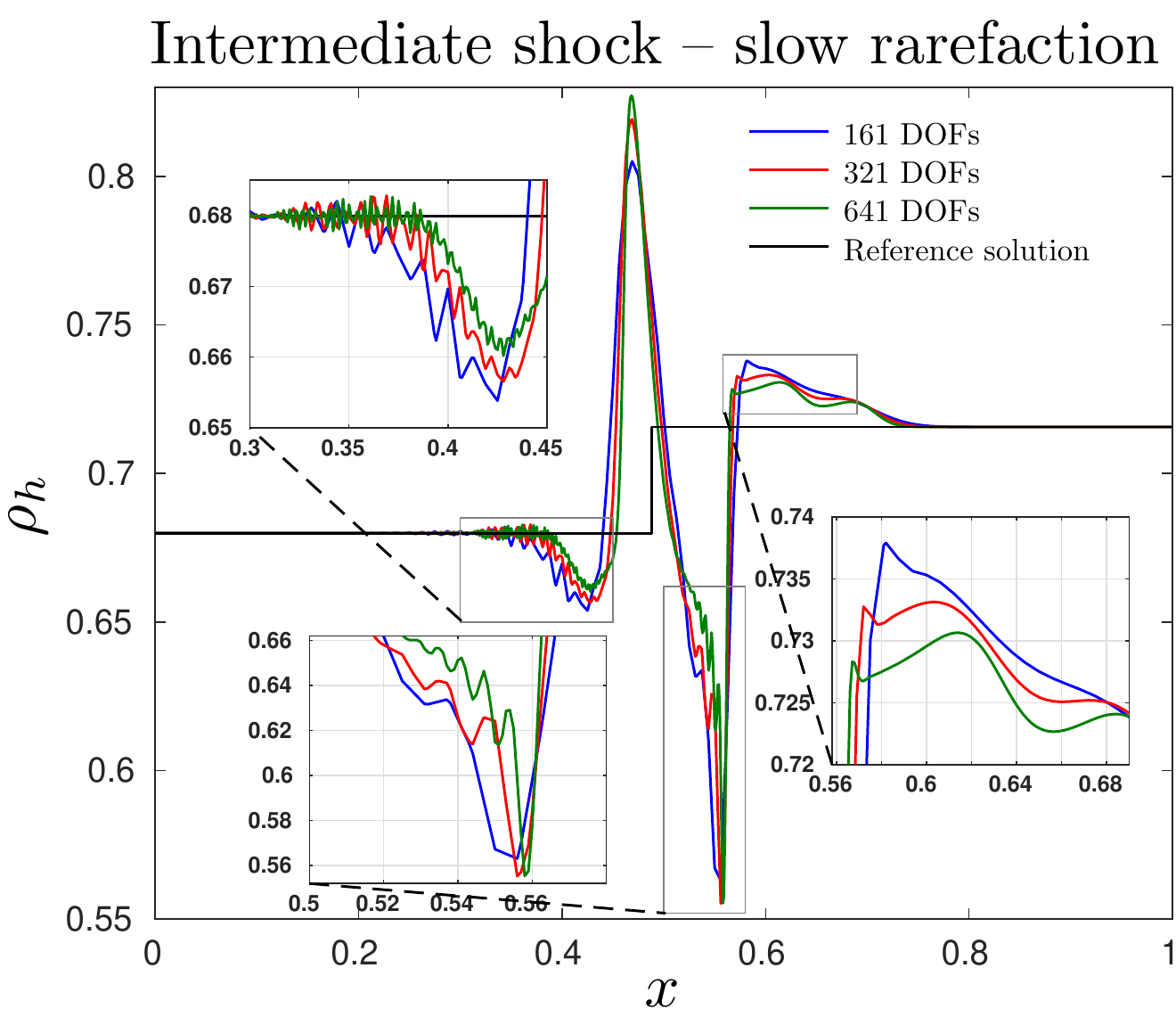}
         \caption{Resistive, $\kappa=0$}
     \end{subfigure}
     \hfill
     \begin{subfigure}{0.325\textwidth}
         \centering
         \includegraphics[width=\textwidth,viewport=100 454 480 780, clip=true]{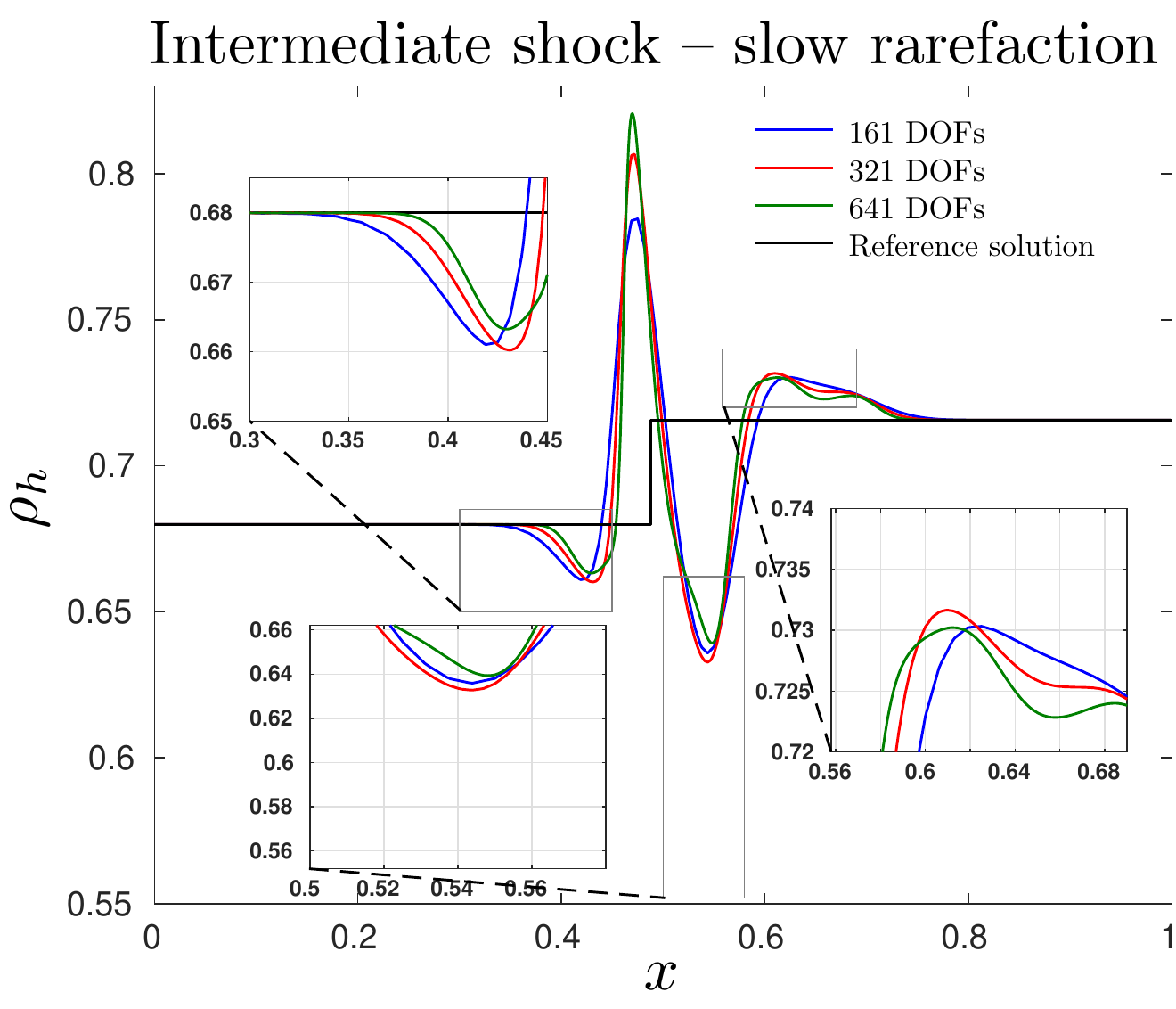}
         \caption{Resistive, $\kappa=1$}
     \end{subfigure}
     \hfill
     \begin{subfigure}{0.325\textwidth}
         \centering
         \includegraphics[width=\textwidth,viewport=100 454 480 780, clip=true]{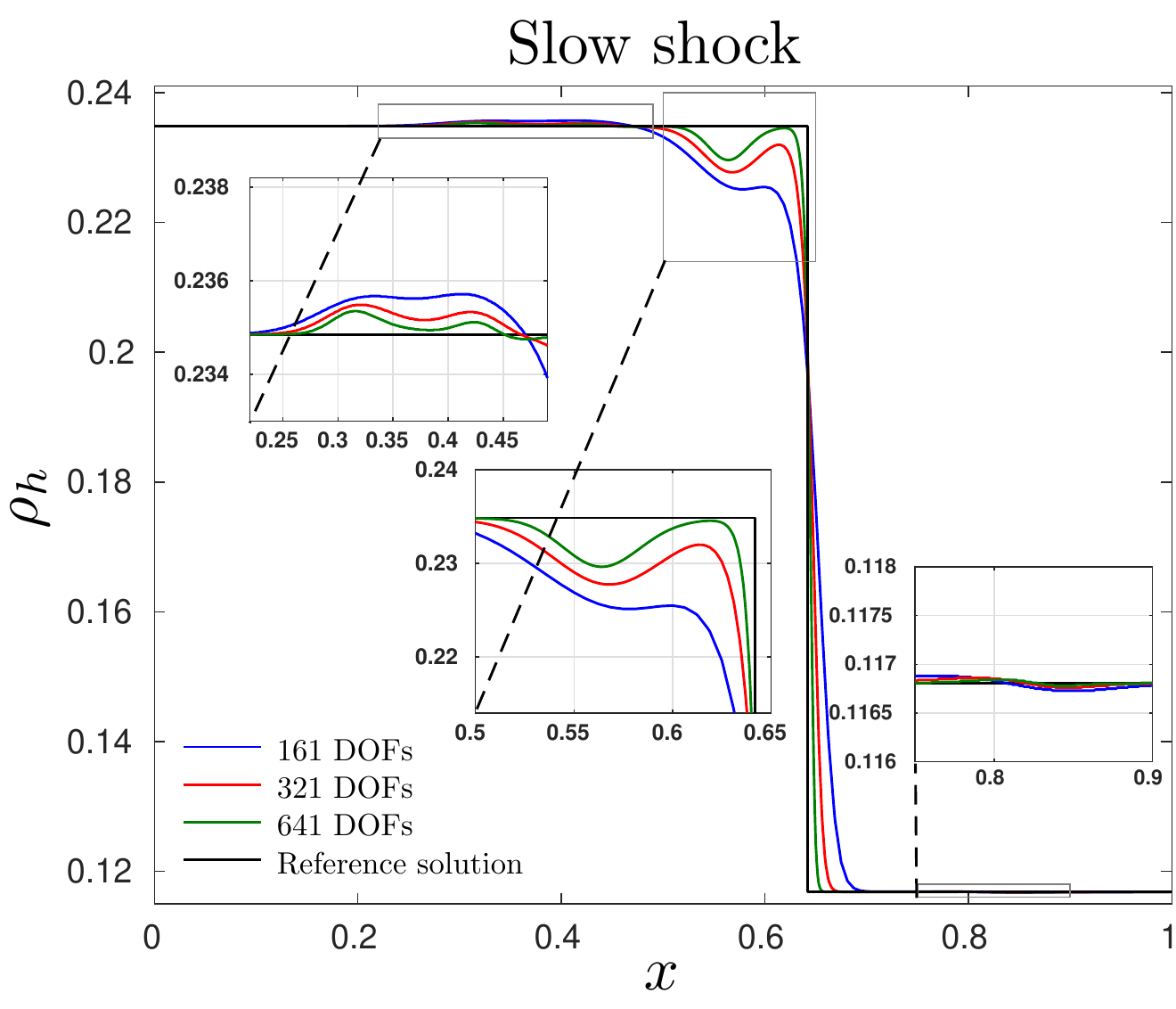}
         \caption{Monolithic}
     \end{subfigure}
     \hfill
     \begin{subfigure}{0.325\textwidth}
         \centering
         \includegraphics[width=\textwidth,viewport=100 454 480 780, clip=true]{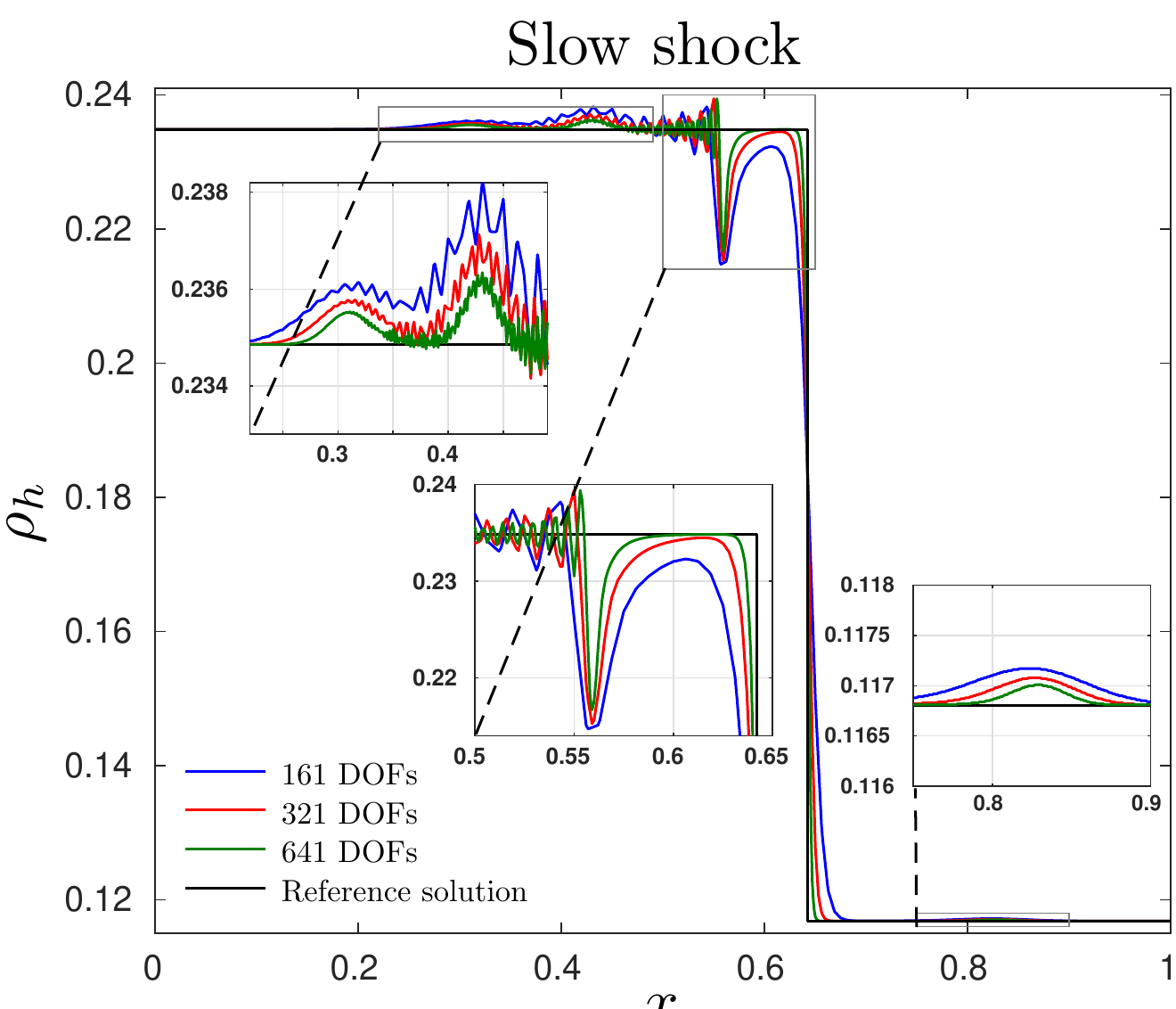}
         \caption{Resistive, $\kappa=0$}
     \end{subfigure}
     \hfill
     \begin{subfigure}{0.325\textwidth}
         \centering
         \includegraphics[width=\textwidth,viewport=100 454 480 780, clip=true]{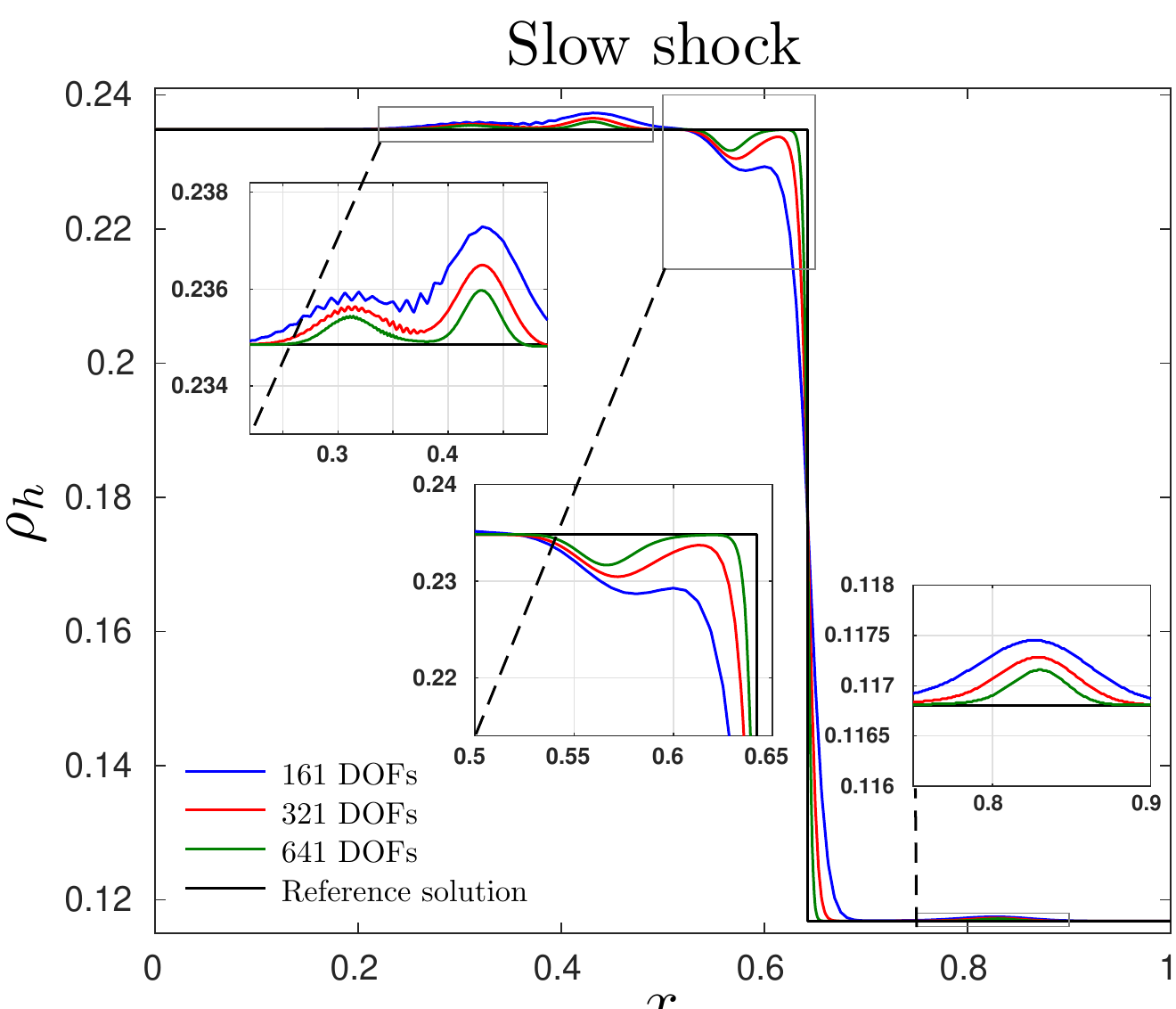}
         \caption{Resistive, $\kappa=1$}
     \end{subfigure}
     \hfill
     \caption{Simulation of single waves from Brio-Wu problem by the monolithic flux and the resistive MHD flux. The solutions are shown under multiple resolution levels: 161, 321, 641 DOFs. Final time $\widehat t = 0.1$.}
     \label{fig:briowu_single_waves}
\end{figure}
In Figure~\ref{fig:briowu_single_waves}(a)-(c), all the viscous fluxes produce undershoot around $x\in(0.5,0.6)$ for the fast rarefaction solutions. However, the undershoot produced by the monolithic flux is smaller in both $L^{\infty}$ and $L^{2}$ senses compared to the other two settings by the resistive MHD flux. The same can be said for the slow shock solutions in Figure~\ref{fig:briowu_single_waves}(g)-(i). The relevance of the undershoots and overshoots in the invariant domain set is however a difficult topic on its own, see \cite{Guermond_Popov_2016} for this discussion on the compressible Euler solutions. Figure~\ref{fig:briowu_single_waves}(d)-(f) show an intermediate shock followed by a slow rarefaction wave by the given initial solution. In this case, however, there is no clear benefits of using the monolithic flux over the resistive MHD flux with $\kappa=1$. Notice that the intermediate shock is not present in the reference solution in Figure~\ref{fig:briowu_single_waves}(d)-(f). The reason is that by default the exact Riemann solver \cite{Torrilhon_2002} does not capture this phenomenon. To this day, the existence of the intermediate shock is still a debatable topic, see e.g., \citep{Freistuhler_1995,Myong_1997,Takahashi_2014,Torrilhon_2002}.

\subsubsection{Discrete minimum principle}\label{sec:discrete_minimum_principle}
We investigate the minimum principle of the experimental CG solutions. For this study, we employ a commonly used entropy function for ideal gas,
\begin{equation}\label{eq:S}
S = \frac{\rho}{\gamma-1}s,
\end{equation}
where the thermodynamic entropy $s = \ln\frac{p}{\rho^\gamma}$ is obtained upon assuming $c_v = 1$ in \eqref{eq:s}. Because $s$ is linearly scaled by $c_v$, we can investigate the discrete minimum entropy principle using any choice of $c_v$.

\cpurple{\begin{remark}It has been shown by the authors of \cite{Guermond_Popov_Yang_2017} that positivity-preserving properties are impossible to achieve when the consistent mass matrix is used by CG methods. Positivity-preserving properties are important and are known to be connected to other invariant-domain preserving properties such as the minimum entropy principles. Therefore, in this Section~\ref{sec:discrete_minimum_principle}, to avoid any possible effects of the consistent mass matrix, we lump the mass matrix in the linear system yielded by \eqref{eq:ode}.\end{remark}}

A history plot of the entropy $s_h$ is presented in Figure~\ref{fig:briowu_entropy}. We note that $s_h \in \calQ_h$ is calculated pointwise from pressure and density, $s_{h,i}=\ln \frac{p_{h,i}}{\rho_{h,i}^\gamma}$ at every nodal point $i$. The viscosity coefficients $\epsilon, \mu, \nu$ are chosen to be first order, which is same to the previous test cases. We demonstrate further the dilemma of using whether $\kappa=0$ or $\kappa > 0$ for the resistive MHD flux in Section~\ref{sec:example_contact_wave}. The dilemma is that setting $\kappa > 0$ makes the flux inconsistent with contact wave as pointed out in Section~\ref{sec:example_contact_wave}, and violation of minimum entropy principle and entropy inequalities \citep{Guermond_Popov_2014}, while letting $\kappa = 0$ would lead to Gibbs phenomenon in numerical approximations \citep{Guermond_Popov_2014,Nazarov_Larcher_2017}. It is worth mentioning that in \cite{Bohm_2018}, the authors show that by adding the Powell terms \citep{Powell_et_al_1991} together with incorporating the generalized Lagrange multiplier (GLM) \citep{Dedner_et_al_2002} to the MHD system, the entropy inequality for the resulting resistive GLM-MHD system is recovered. Figure~\ref{fig:briowu_entropy}(a) compares the behavior of $s_h$ between using the monolithic flux $\bsfF_{\calV}^m(\bsfU)$ and the resistive MHD flux $\bsfF_{\calV}^r(\bsfU)$ when setting $\kappa=0$. Figure~\ref{fig:briowu_entropy}(b) shows a similar comparison but the parameter $\kappa$ is set to $1$. \cpurple{In Figure~\ref{fig:briowu_entropy}(a) and \ref{fig:briowu_entropy}(b), one can see the violation of the minimum principle since $\min_{\Omega}(s_h)$ is not monotonically increasing in time. In both cases of $\kappa=0$ and $\kappa=1$, there is a drastic fall of $\min_{\Omega}(s_h)$ in the start up phase. It gradually recovers after that. The portion of time when the minimum principle is violated seems to be shortened by refining the mesh. Only in the case of the monolithic flux, the discrete minimum entropy principle is fulfilled. We note that the minimum principle would not be satisfied if the consistent mass matrix was used. In the history plots of $\min_{\Omega}(s_h)$ by the monolithic flux in Figure~\ref{fig:briowu_entropy}(a) and \ref{fig:briowu_entropy}(b), the minimum principle violation is in the order of machine epsilon. Even though in this paper we have proved the minimum principle in the continuum case along with the shown numerical evidence, much work is needed to design a discretization that provably ensures this property at the fully discrete level. Constructing such a CG method is within our ongoing works.}

\begin{figure}[h!]
     \centering
     \begin{subfigure}{0.49\textwidth}
         \centering
         \includegraphics[width=\textwidth,viewport=100 485 458 765, clip=true]{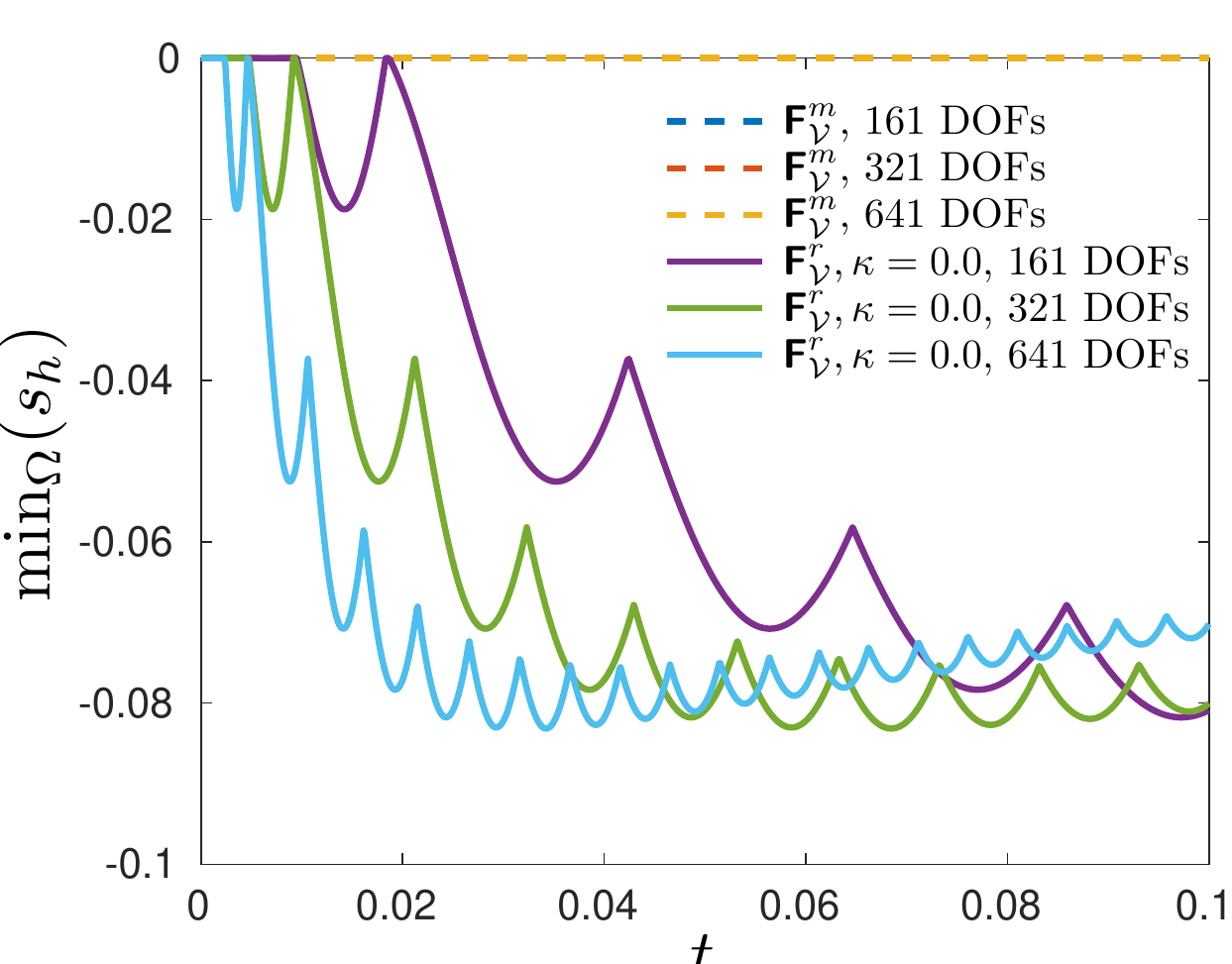}
         \caption{\cblue{Monolithic vs resistive MHD flux, $\kappa=0$}}
     \end{subfigure}
     \hfill
     \begin{subfigure}{0.49\textwidth}
         \centering
         \includegraphics[width=\textwidth,viewport=100 485 458 765, clip=true]{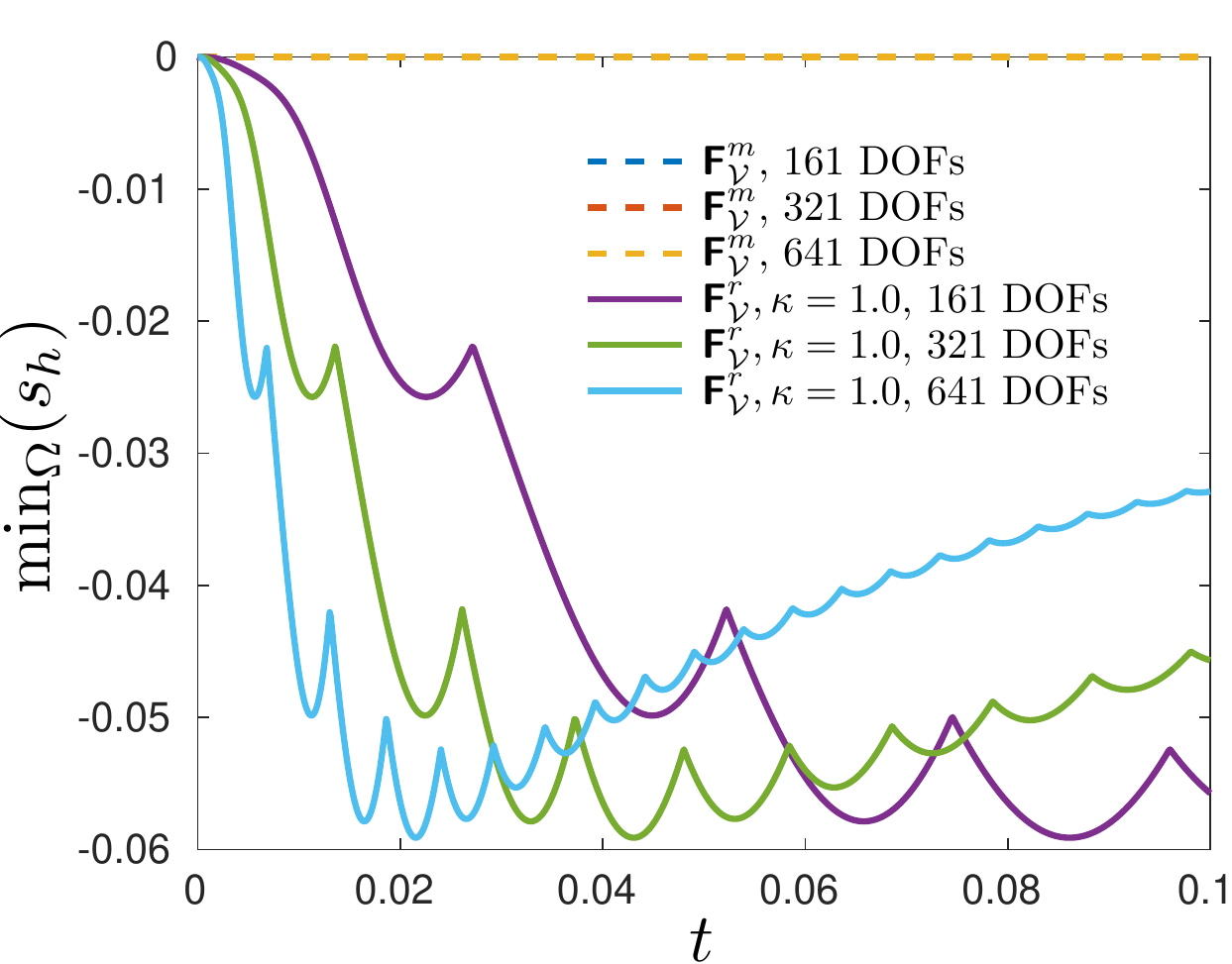}
         \caption{\cblue{Monolithic vs resistive MHD flux, $\kappa=1$}}
     \end{subfigure}
     \caption{\cblue{Investigation of the discrete minimum entropy principle on the Brio-Wu problem comparing the monolithic and the resistive MHD flux. The value $\min_\Omega(s_h)$ are captured at 1000 snapshots in time under multiple resolution levels: 161, 321, 641 DOFs. The range of $\min_\Omega(s_h)$ for the monolithic flux is in the order of machine epsilon: $\min_\Omega(s_h)\in$[-4.6E-15, 0] using 161 DOFs, $\min_\Omega(s_h)\in$[-5.2E-15,0] using 321 DOFs, and $\min_\Omega(s_h)\in$[-6.3E-15,0] using 641 DOFs.}}
     \label{fig:briowu_entropy}
\end{figure}

\subsection{Entropy viscosity method}
For high-order stabilization, the entropy viscosity method was proposed by \citep{Guermond_2008,Guermond_2011}. For systems of conservation laws, entropy is a conserved quantity in smooth solution regions, which is known as the ``entropy equation''. In presence of shocks and discontinuities, this equation becomes an inequality. The idea of the entropy viscosity method is to use entropy residual -- the violation of the entropy equation as an indicator of the shock locations. Effectively, it adds enough viscosity to stabilize the discontinuities, while being negligible in the smooth regions to preserve high-order accuracy away from the discontinuities. We employ the entropy function \eqref{eq:S} to calculate the entropy residual. In each time step, we seek the entropy residual $R_h$ in the finite element space $\calC^1(\mR^+,\calQ_h)$. A robust way to construct $R_h$ has shown to be by a nodal-based approach \cite{Nazarov_Larcher_2017},
\[
R_{h,i}(t) := \sum_{K\in\calT_h} \frac{1}{|K|} \int_{K} | \p_tS_h+\DIV(\bu_hS_h)|\varphi_i \ud\bx,
\]
where $|K|$ is the volume/area of the element $K$, and $S_h$ is computed as in Section \ref{sec:discrete_minimum_principle}. In order to construct the necessary amount of artificial viscosity to stabilize the solution, we need to compute a low-order viscosity coefficient function $\epsilon_h^L\in\calQ_h$ proportional to an approximate maximum wave speed, and a high-order viscosity function $\epsilon_h^H\in\calQ_h$ based on the entropy residual $R_h$.

The low-order viscosity is calculated as described in Section \ref{sec:example_contact_wave}, for every node $i$, we compute
\[
\epsilon_{h,i}^L = c_{\max}h_{i}\max_{i=1,8}\vert\Lambda_i\vert,
\]
where $\max_{i=1,8}\vert\Lambda_i\vert$ approximates the maximum wave speed, $h_i$ is the corresponding nodal value of $h(x)$ by \eqref{eq:h_h}, and the parameter $c_{\max}$ is set to be $0.5$.

The high-order viscosity is set to be proportional to the normalized entropy residual,
\[
\epsilon_{h,i}^H = c_E h_{i}^2 \frac{\vert R_h \vert}{\|\overline{R_h} - R_h\|_{\infty,\Omega}},
\]
where $\|\overline{R_h} - R_h\|_{\infty,\Omega}$ normalizes the unit of $\vert R_h \vert$, the maximum variance $\overline{R_h}:=\|R_h-\text{mean}(R_h)\|_{\infty,\Omega}$, and $c_E$ is set to be 1.

The final artificial viscosity coefficient at each nodal $i$ is assembled as
\[
\epsilon_{h,i} = \min(\epsilon_{h,i}^L,\epsilon_{h,i}^H).
\]

In the next sections, we investigate the accuracy and the shock-capturing capability of the entropy viscosity method when it is incorporated in the $\epsilon$ coefficient in the monolithic viscous flux \eqref{eq:monolithic_flux}.

\subsubsection{Accuracy test}
Consider a periodic smooth vortex problem on a rectangle domain $\Omega = [-10, 10] \times [-10, 10]$. The reference solution is a stationary flow with a vortex perturbation
\[
(\rho(t), \bu(t), p(t), \bB(t)) =(\rho_0, \bu_0+\delta\bu, p_0+\delta p, \bB_0+\delta\bB),
\]
where
\begin{align*}
\rho_0 &= 1, & & \\
\bu_0 &= (1,1), &\delta \bu &= \frac{\mu}{\pi\sqrt 2}e^{(1-r^2)/2}(-r_2,r_1),\\
p_0 &=0, &\delta p &= -\frac{\mu^2(1+r^2)e^{1-r^2}}{8\pi^2}, \\
\bB_0 &=(0.1,0.1), & \delta \bB & = \frac{\mu e^{(1-r^2)/2}}{2\pi}(-r_2,r_1),
\end{align*}
the vortex radius is $r=\sqrt{r_1^2+r_2^2}$, $(r_1,r_2)=(x,y)-\bu_0t$, and the vortex strength is $\mu=5.389489439$. The adiabatic constant is  $\gamma=\frac{5}{3}$. The errors measured at final time $\widehat t=0.05$.

\begin{table}[h!]
    \centering
    \caption{Smooth vortex problem. L$^1$, L$^2$ error of the finite element solution and convergence rates of velocity $\bu$ and magnetic field $\bB$ at final time $\widehat t=0.05$ using $\polP_1$ elements.}
    \label{table:convergence_vortex_P1}
    \begin{adjustbox}{max width=\textwidth}
    \begin{tabular}{c|c|c|c|c|c|c|c|c}
    \hline
    \multirow{2}{*}{\#DOFs} &  \multicolumn{4}{c|}{Entropy viscosity solution $\bu$} &  \multicolumn{4}{c}{Unregularized Galerkin solution $\bu$}  \\ \cline{2-9}
     {}   &       L$^1$   &    Rate   &       L$^2$   &    Rate &       L$^1$   &    Rate   &       L$^2$   &    Rate   \\ \hline
     7442 & 6.11E-04 &    --  &   3.37E-03 &     -- & 5.89E-04 &   --   &   3.24E-03 &   --   \\ 
    29282 & 1.51E-04 &   2.01 &   8.36E-04 &   2.01 & 1.48E-04 &   1.99 &   8.18E-04 &   1.99 \\ 
   116162 & 3.75E-05 &   2.01 &   2.07E-04 &   2.01 & 3.71E-05 &   2.00 &   2.05E-04 &   2.00 \\ 
   462722 & 9.33E-06 &   2.01 &   5.16E-05 &   2.01 & 9.28E-06 &   2.00 &   5.13E-05 &   2.00 \\ \hline
   \multirow{2}{*}{\#DOFs} &  \multicolumn{4}{c|}{Entropy viscosity solution $\bB$} &  \multicolumn{4}{c}{Unregularized Galerkin solution $\bB$}  \\ \cline{2-9}
     {}   &       L$^1$   &    Rate   &       L$^2$   &    Rate &       L$^1$   &    Rate   &       L$^2$   &    Rate   \\ \hline
     7442 & 2.47E-02 &   --   &   2.77E-02 &   --   & 2.35E-02 &   --   &   2.60E-02 &   --   \\ 
    29282 & 6.10E-03 &   2.04 &   6.80E-03 &   2.05 & 5.90E-03 &   2.01 &   6.56E-03 &   2.01 \\ 
   116162 & 1.50E-03 &   2.03 &   1.68E-03 &   2.03 & 1.48E-03 &   2.01 &   1.64E-03 &   2.01 \\ 
   462722 & 3.73E-04 &   2.02 &   4.16E-04 &   2.02 & 3.70E-04 &   2.01 &   4.11E-04 &   2.01 \\ \hline
    \end{tabular}
    \end{adjustbox}
\end{table}

\begin{table}[h!]
    \centering
    \caption{Smooth vortex problem. L$^1$, L$^2$ error of the finite element solution and convergence rates of velocity $\bu$ and magnetic field $\bB$ at final time $\widehat t=0.05$ using $\polP_2$ elements.}
    \label{table:convergence_vortex_P2}
    \begin{adjustbox}{max width=\textwidth}
    \begin{tabular}{c|c|c|c|c|c|c|c|c}
    \hline
    \multirow{2}{*}{\#DOFs} &  \multicolumn{4}{c|}{Entropy viscosity solution $\bu$} &  \multicolumn{4}{c}{Unregularized Galerkin solution $\bu$}  \\ \cline{2-9}
     {}   &       L$^1$   &    Rate   &       L$^2$   &    Rate &       L$^1$   &    Rate   &       L$^2$   &    Rate   \\ \hline
     7442 & 1.92E-04 &   --   &   1.14E-03 &   --    & 1.91E-04 &   --   &   1.13E-03 &   --   \\ 
    29282 & 3.41E-05 &   2.50 &   2.01E-04 &   2.50  & 3.41E-05 &   2.48 &   2.02E-04 &   2.49 \\ 
   116162 & 7.96E-06 &   2.10 &   4.69E-05 &   2.10  & 7.98E-06 &   2.10 &   4.70E-05 &   2.10 \\ 
   462722 & 1.99E-06 &   2.00 &   1.17E-05 &   2.00  & 2.00E-06 &   2.00 &   1.17E-05 &   2.00 \\ \hline
   \multirow{2}{*}{\#DOFs} &  \multicolumn{4}{c|}{Entropy viscosity solution $\bB$} &  \multicolumn{4}{c}{Unregularized Galerkin solution $\bB$}  \\ \cline{2-9}
     {}   &       L$^1$   &    Rate   &       L$^2$   &    Rate &       L$^1$   &    Rate   &       L$^2$   &    Rate   \\ \hline
     7442 & 7.95E-03 &   --   &   9.23E-03 &   --   & 7.83E-03 &   --   &   9.08E-03 &   -- \\ 
    29282 & 1.40E-03 &   2.53 &   1.54E-03 &   2.61 & 1.40E-03 &   2.51 &   1.54E-03 &   2.59 \\ 
   116162 & 3.18E-04 &   2.16 &   3.32E-04 &   2.23 & 3.18E-04 &   2.15 &   3.32E-04 &   2.22 \\ 
   462722 & 7.75E-05 &   2.04 &   8.01E-05 &   2.06 & 7.76E-05 &   2.04 &   8.03E-05 &   2.06 \\ \hline
    \end{tabular}
    \end{adjustbox}
\end{table}

\begin{table}[h!]
    \centering
    \caption{Smooth vortex problem. L$^1$, L$^2$ error of the finite element solution and convergence rates of velocity $\bu$ and magnetic field $\bB$ at final time $\widehat t=0.05$ using $\polP_3$ elements.}
    \label{table:convergence_vortex_P3}
    \begin{adjustbox}{max width=\textwidth}
    \begin{tabular}{c|c|c|c|c|c|c|c|c}
    \hline
    \multirow{2}{*}{\#DOFs} &  \multicolumn{4}{c|}{Entropy viscosity solution $\bu$} &  \multicolumn{4}{c}{Unregularized Galerkin solution $\bu$}  \\ \cline{2-9}
     {}   &       L$^1$   &    Rate   &       L$^2$   &    Rate &       L$^1$   &    Rate   &       L$^2$   &    Rate   \\ \hline
     7442 & 1.13E-04 &   -- &   5.70E-04 &   --  &  1.14E-04 &   -- &   5.87E-04 &   -- \\ 
    29282 & 8.09E-06 &   3.80 &   4.89E-05 &   3.54  &  8.11E-06 &   3.82 &   4.95E-05 &   3.57  \\ 
   116162 & 5.29E-07 &   3.93 &   3.95E-06 &   3.63  &  5.18E-07 &   3.97 &   3.89E-06 &   3.67 \\ 
   462722 & 3.93E-08 &   3.75 &   3.96E-07 &   3.32  &  3.42E-08 &   3.92 &   3.67E-07 &   3.41 \\ \hline
   \multirow{2}{*}{\#DOFs} &  \multicolumn{4}{c|}{Entropy viscosity solution $\bB$} &  \multicolumn{4}{c}{Unregularized Galerkin solution $\bB$}  \\ \cline{2-9}
     {}   &       L$^1$   &    Rate   &       L$^2$   &    Rate &       L$^1$   &    Rate   &       L$^2$   &    Rate   \\ \hline
     7442 & 4.43E-03 &   --   &   4.43E-03 &  --  & 4.44E-03 &   --   &   4.51E-03 &   --  \\ 
    29282 & 2.82E-04 &   4.02 &   2.94E-04 & 3.96 & 2.81E-04 &   4.03 &   2.96E-04 &   3.98  \\ 
   116162 & 1.84E-05 &   3.96 &   2.35E-05 & 3.67 & 1.79E-05 &   3.99 &   2.26E-05 &   3.73 \\ 
   462722 & 1.60E-06 &   3.54 &   2.64E-06 & 3.16 & 1.36E-06 &   3.73 &   2.15E-06 &   3.41 \\ \hline
    \end{tabular}
    \end{adjustbox}
\end{table}

\begin{figure}[h!]
     \centering
     \begin{subfigure}{0.49\textwidth}
         \centering
         \includegraphics[width=\textwidth,viewport=100 420 455 775, clip=true]{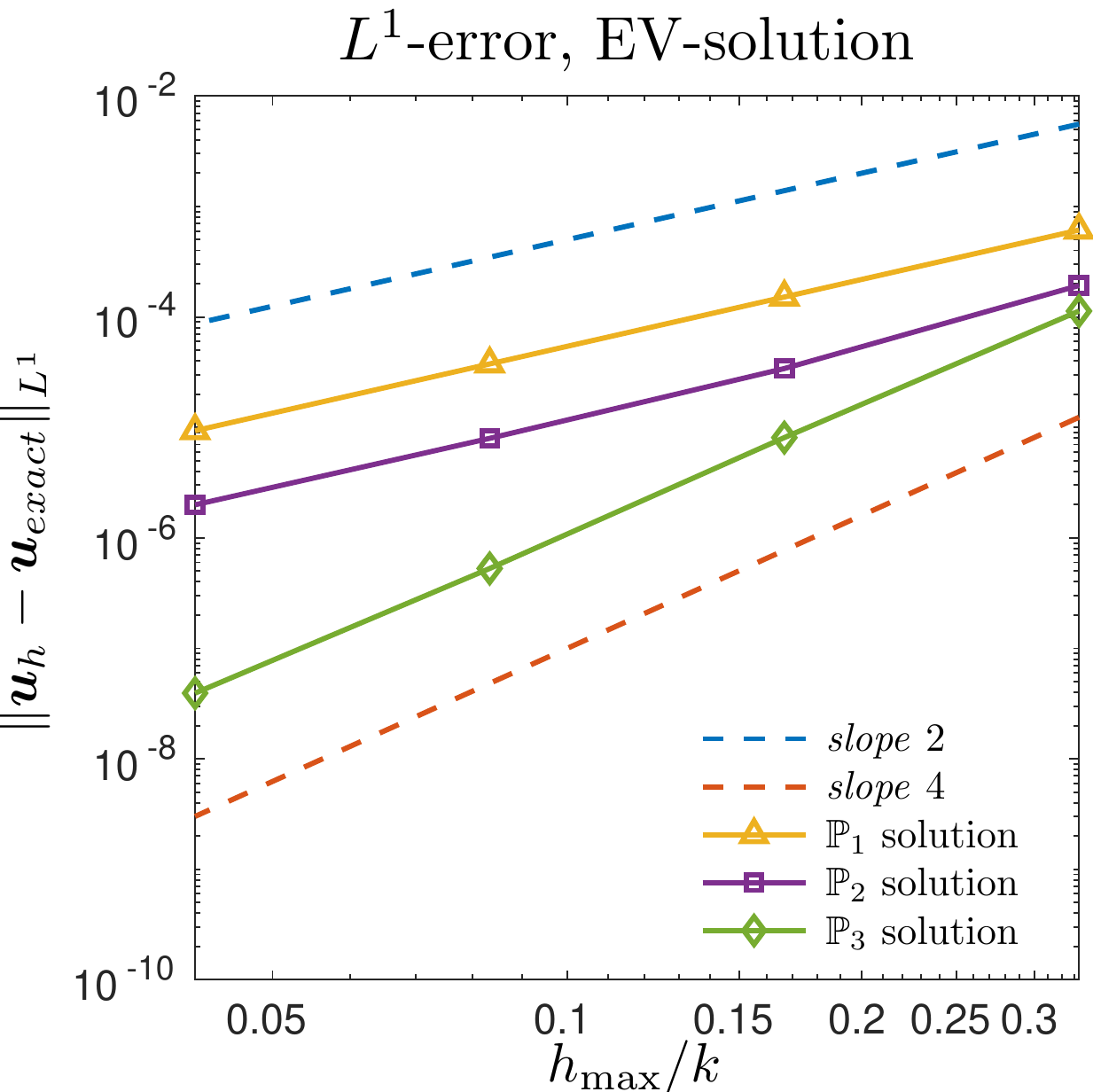}
     \end{subfigure}
     \hfill
     \begin{subfigure}{0.49\textwidth}
         \centering
         \includegraphics[width=\textwidth,viewport=100 420 455 775, clip=true]{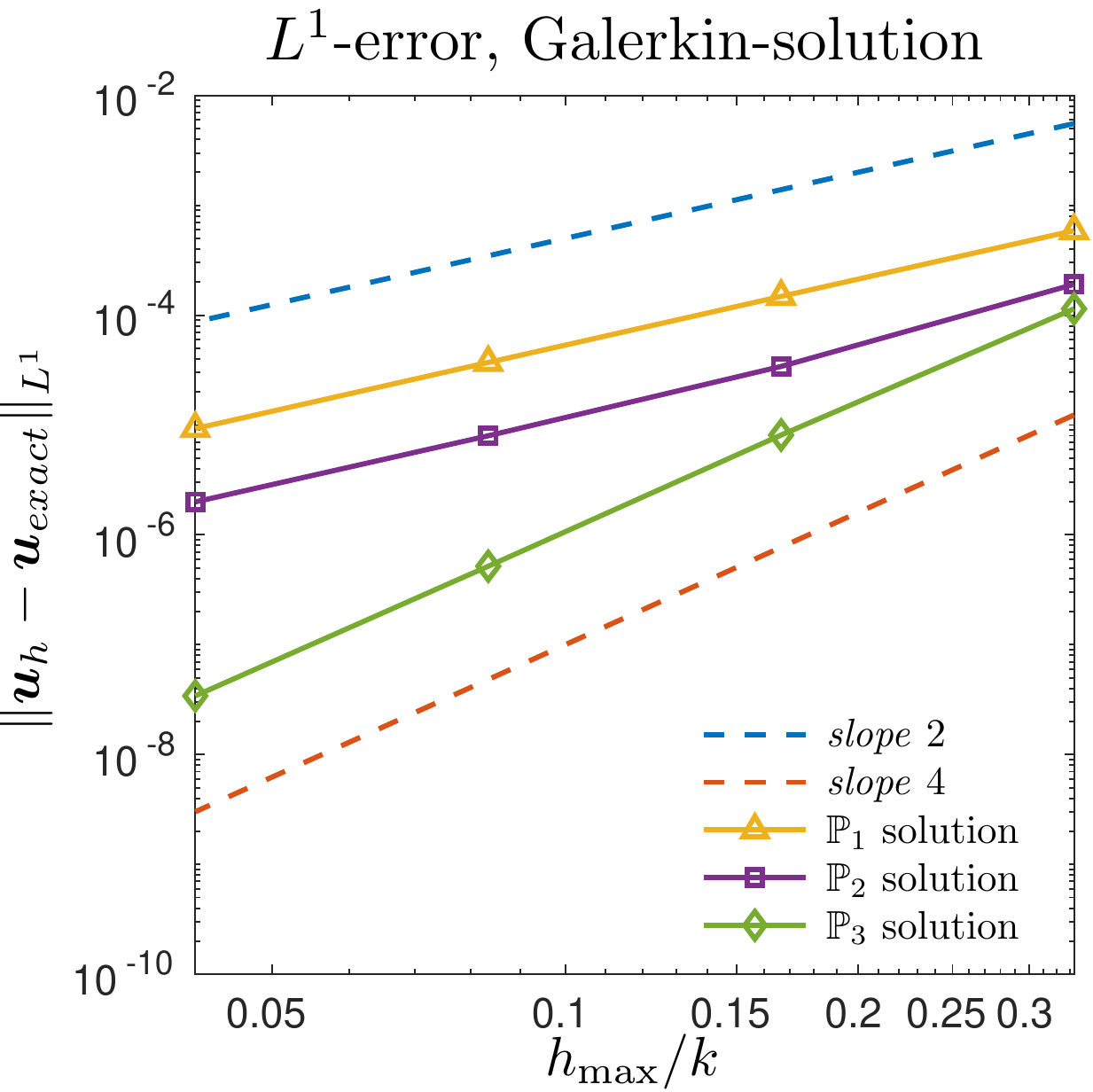}
     \end{subfigure}
     \caption{Accuracy test on the smooth vortex problem: convergence history for velocity $\bu_h$ against the exact solution. The entropy viscosity method using the monolithic flux does not destroy the accuracy of high-order solutions.}
     \label{fig:conv_history}
\end{figure}

\subsubsection{Brio-Wu problem with the EV method}

\cblue{We again consider the Brio-Wu problem described in Section~\ref{sec:single_waves}. It can be seen that due to being a first order method, the solutions presented in Section~\ref{sec:single_waves} are over dissipative for the given number of degrees of freedom (DOFs). In this section, we want to demonstrate the convergence of the EV method and its efficiency as a shock capturing technique. The problem settings are kept same but the viscosity coefficients are now calculated by the EV method. The result is presented in Figure~\ref{fig:briowu_EV}. For comparison, we show a solution under 641 nodes as it has been shown in Section~\ref{sec:single_waves}. One can see that under the same modest number of nodes, the EV solution is a much better approximation of the expected solution compared to the first order solution.}

\begin{figure}[!ht]
     \centering
     \includegraphics[width=0.59\textwidth,viewport=100 454 480 780, clip=true]{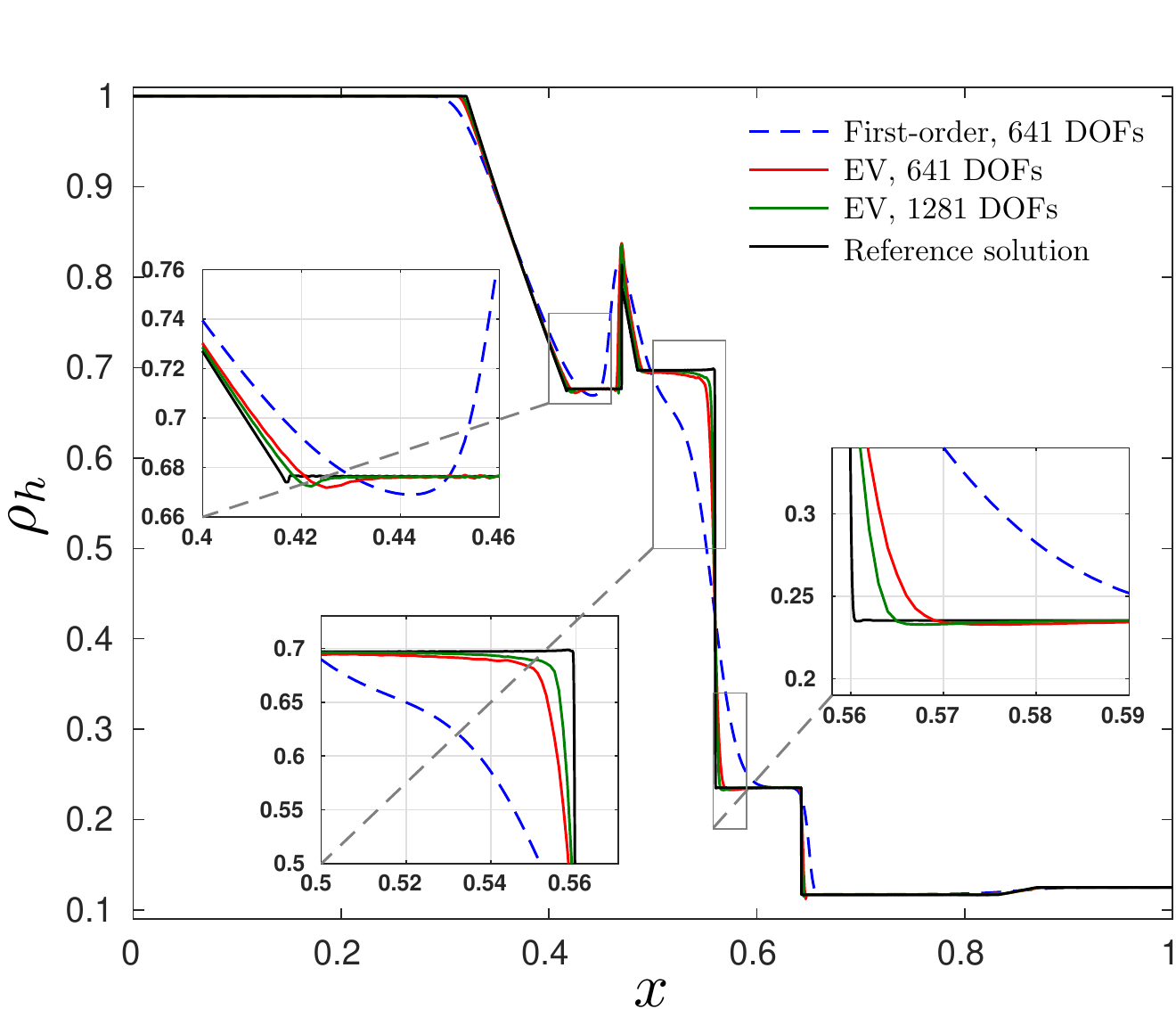}
     \caption{\cblue{EV solutions to the Brio-Wu problem with 641, 1281 DOFs. Final time $\widehat t = 0.1$. For comparison, the first order solution with 641 DOFs in Section~\ref{sec:single_waves} is presented by the blue dashed line. The red line presents the EV solution obtained under the same number of 641 DOFs. The green line presents the EV solution on a twice finer mesh.}}
     \label{fig:briowu_EV}
\end{figure}

\subsubsection{Orszag-Tang problem, \citep{Orszag_Tang_1979}}
The popular 2D Orszag-Tang benchmark is investigated in this section. We consider the unit square domain $\Omega = [0,1] \times [0,1]$ with periodic boundaries in both $x$- and $y$- directions. The initial solution is given as follows,
\[
(\rho_0, \bu_0, p_0, \bB_0)= \left(\frac{25}{36\pi},(-\sin(2\pi y),\sin(2\pi x)),\frac{5}{12\pi},\left(-\frac{\sin(2\pi y)}{\sqrt{4\pi}},\frac{\sin(4\pi x)}{\sqrt{4\pi}}\right)\right).
\]
The adiabatic constant is $\gamma=\frac{5}{3}$. We compute the solution with the entropy viscosity method using the monolithic parabolic flux. The density solution $\rho_h$ and the artificial viscosity $\mu_h$ using $\polP_3$ elements are respectively shown at time $t = 0.5$ and $t=1.0$ in Figure \ref{fig:orszag-tang_T05} and \ref{fig:orszag-tang_T1}. It can be seen that the artificial viscosity locally tracks the shocks and effectively adds enough viscosity to stabilize the solution. At $t=1.0$, the behavior of the Orszag-Tang solution is considered turbulence \citep{Tricco_2016}. It is worth noting that simulation of the Orszag-Tang problem after $t = 0.5$ is a challenging task that is not straightforwardly achieved by many numerical methods due to the solution being prone to divergence blowups, see \cite{Dao_2021,Guillet_2019}. Nonetheless, the solution at $t=1.0$ is still well captured by the described viscosity method.

\begin{figure}[h!]
     \centering
     \begin{subfigure}{0.49\textwidth}
         \centering
         \includegraphics[width=\textwidth]{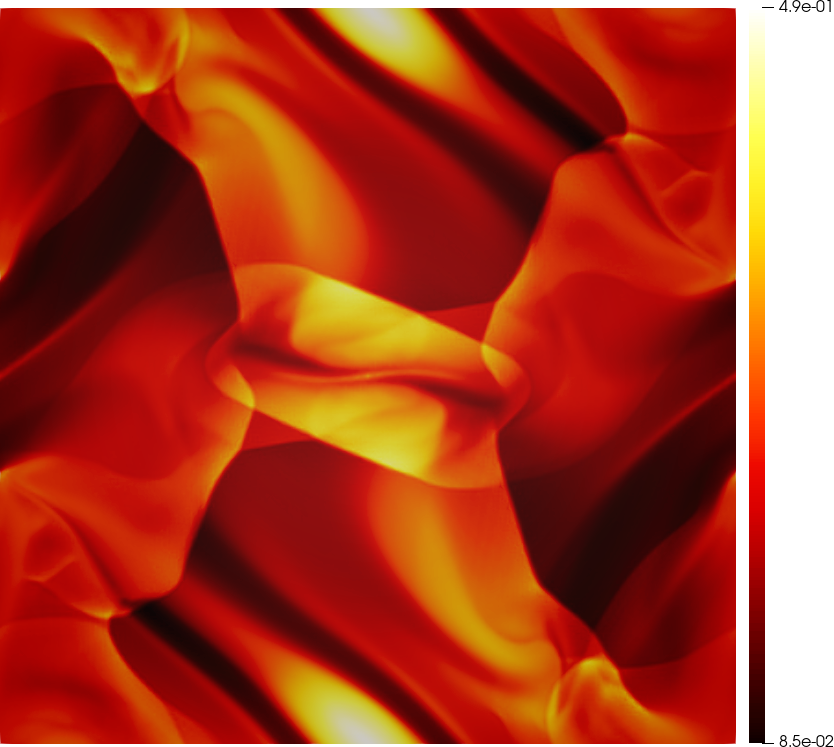}
         \caption{Density $\rho_h$}
     \end{subfigure}
     \hfill
     \begin{subfigure}{0.49\textwidth}
         \centering
         \includegraphics[width=\textwidth]{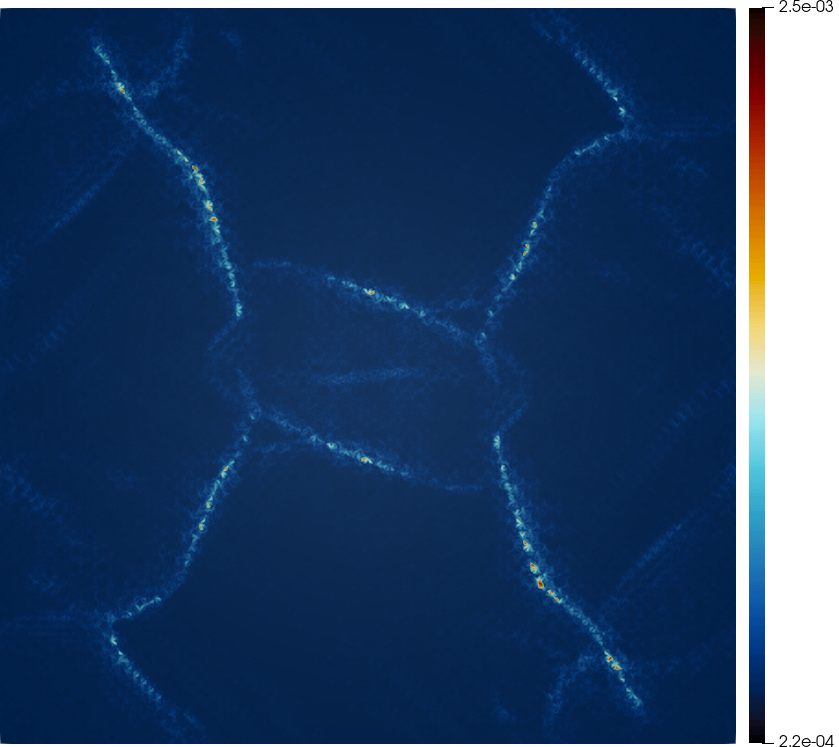}
         \caption{Entropy viscosity $\mu_h$}
     \end{subfigure}
     \hfill
     \begin{subfigure}{0.49\textwidth}
         \centering
         \includegraphics[width=\textwidth]{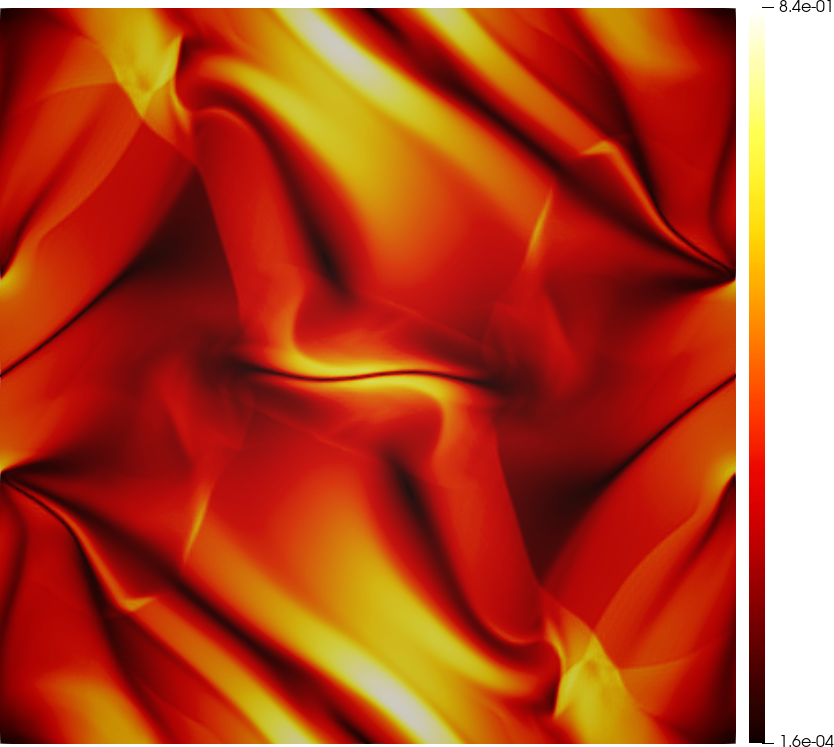}
         \caption{Magnetic pressure $\frac{1}{2}\bB_h^2$}
     \end{subfigure}
     \hfill
     \begin{subfigure}{0.49\textwidth}
         \centering
         \includegraphics[width=\textwidth]{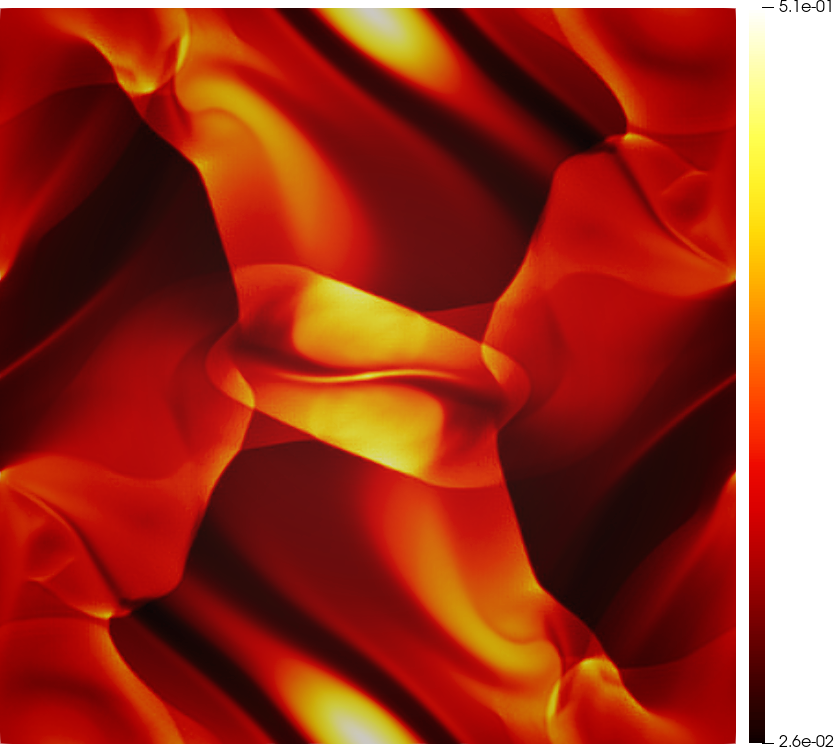}
         \caption{Thermodynamic pressure $p_h$}
     \end{subfigure}
     \caption{Entropy viscosity solution of the Orszag-Tang problem at time $\widehat t = 0.5$, 90000 $\polP_3$ nodes}
     \label{fig:orszag-tang_T05}
\end{figure}

\begin{figure}[h!]
     \centering
     \begin{subfigure}{0.49\textwidth}
         \centering
         \includegraphics[width=\textwidth]{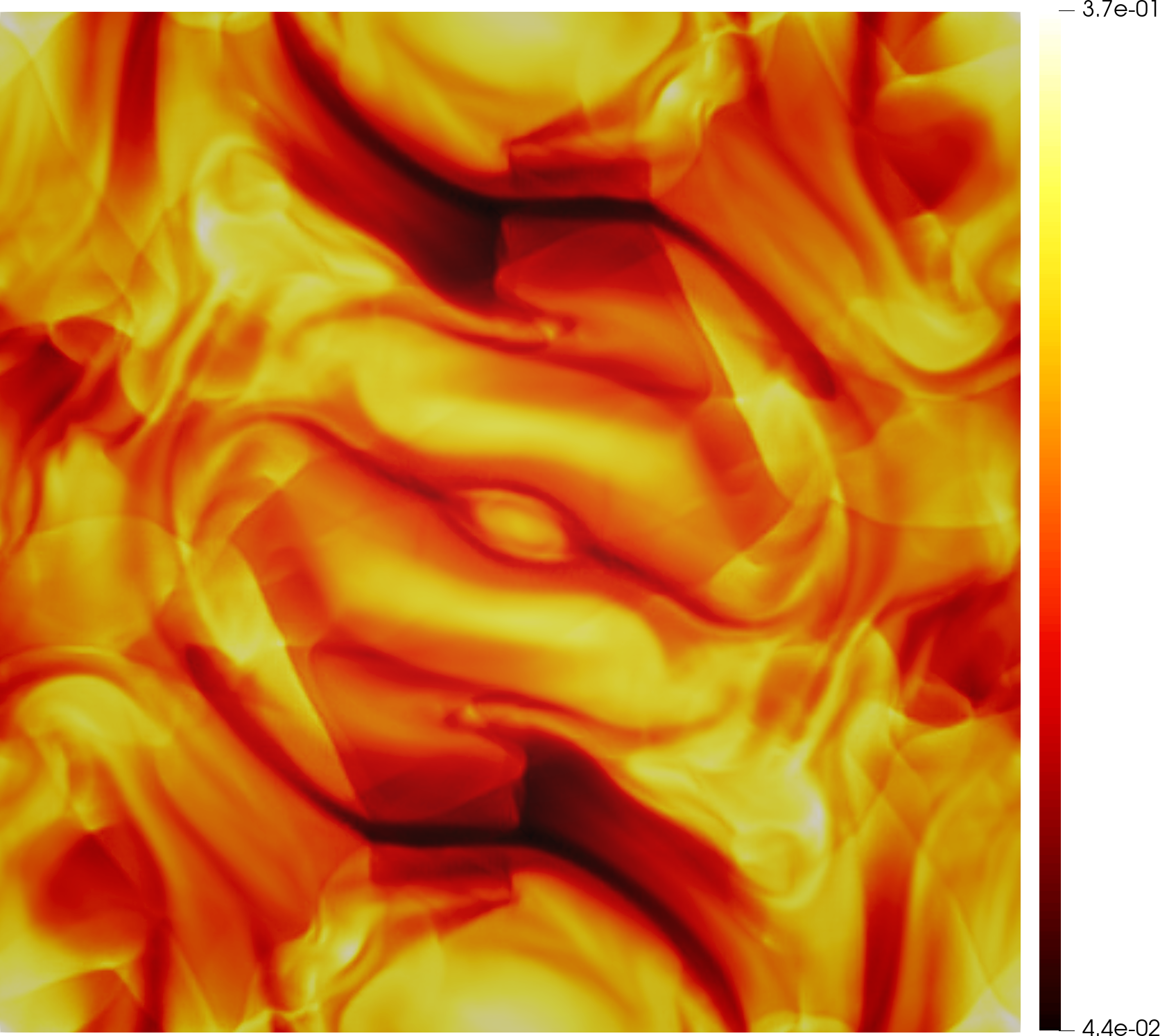}
         \caption{Density $\rho_h$}
     \end{subfigure}
     \hfill
     \begin{subfigure}{0.49\textwidth}
         \centering
         \includegraphics[width=\textwidth]{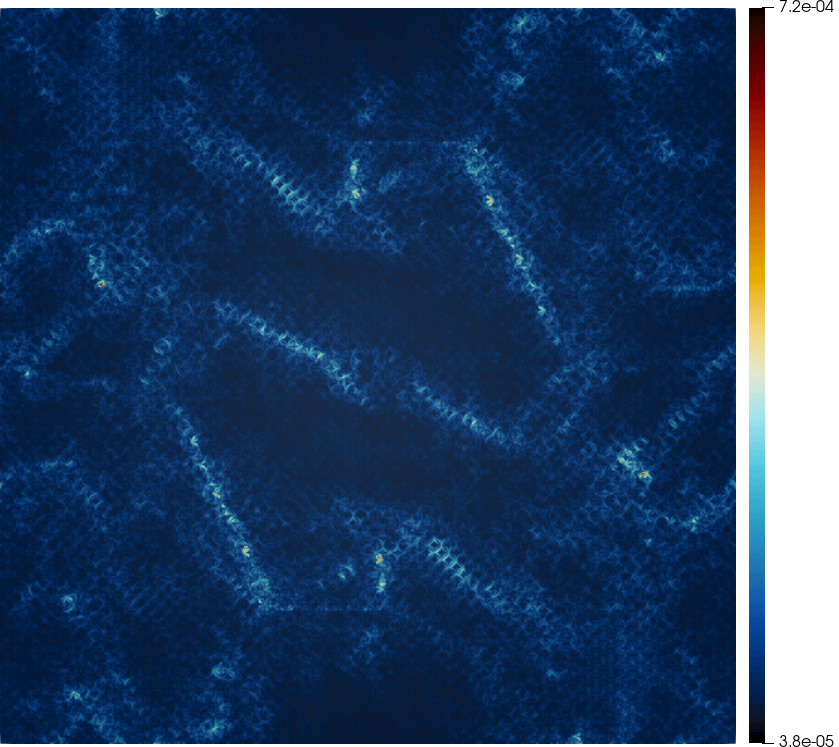}
         \caption{Entropy viscosity $\mu_h$}
     \end{subfigure}
     \hfill
     \begin{subfigure}{0.49\textwidth}
         \centering
         \includegraphics[width=\textwidth]{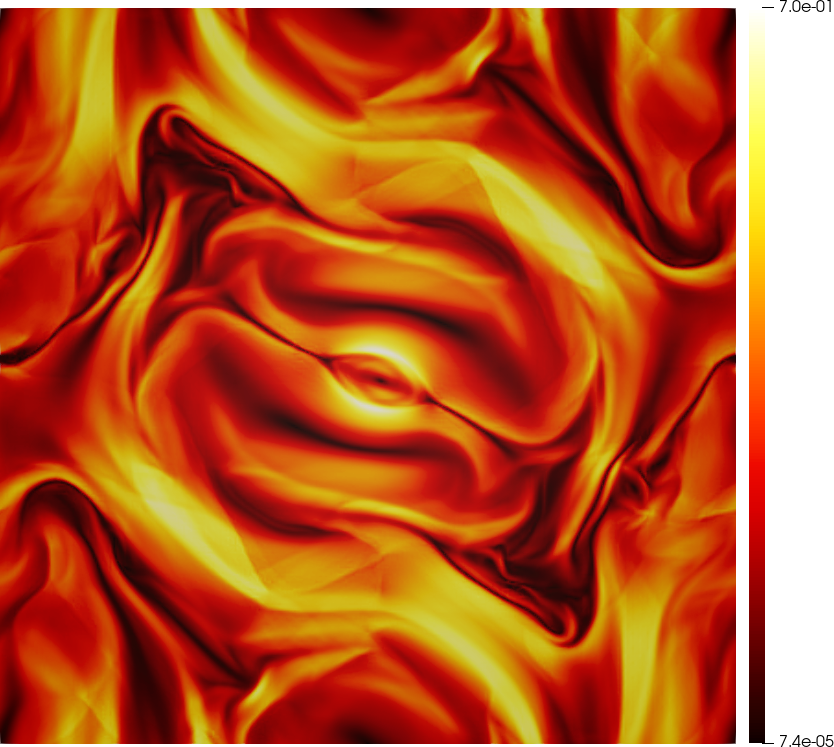}
         \caption{Magnetic pressure $\frac{1}{2}\bB_h^2$}
     \end{subfigure}
     \hfill
     \begin{subfigure}{0.49\textwidth}
         \centering
         \includegraphics[width=\textwidth]{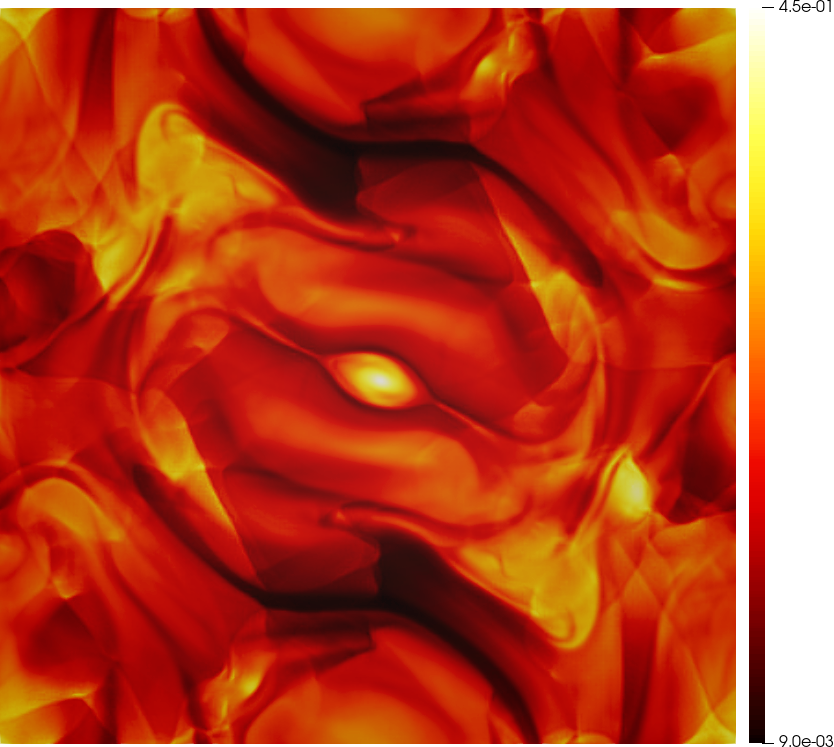}
         \caption{Thermodynamic pressure $p_h$}
     \end{subfigure}
     \caption{Entropy viscosity solution of the Orszag-Tang problem at time $\widehat t = 1.0$, 90000 $\polP_3$ nodes}
     \label{fig:orszag-tang_T1}
\end{figure}

\cblue{The next two benchmarks in Section~\ref{sec:rotor} and \ref{sec:blast} are more challenging for numerical MHD solvers. They contain strong discontinuities while having a low plasma-beta number $\beta:=\frac{2p}{\bB^2}$, small density, and small gas pressure. This condition may easily lead to breakdowns of the solvers due to the negativity of the pressure and density of the numerical solutions.}

\subsubsection{\cblue{MHD Rotor problem,} \citep{Balsara_et_al_1999}}\label{sec:rotor}

\cblue{The computational domain is the unit square $\Omega=[0,1]\times[0,1]$. The initial pressure and magnetic fields are uniform, $p_0=1, \bB_0 = \left(\frac{5}{\sqrt{4\pi}},0\right)$. For $\bx = (x,y) \in \Omega$, we define a radius $r$ as $r:=\sqrt{(x-0.5)^2+(y-0.5)^2}$. The density and velocity are defined as
\[
(\rho_0,\bu_0) = \begin{cases}
  \left(10, \left(\frac{2}{r_0}\left(\frac12-y\right),\frac{2}{r_0}\left(x-\frac12\right)\right)^\top\right) & \text{ if } r < r_0, \\
  \left(1+9f, \left(f\frac{2}{r}\left(\frac12-y\right),f\frac{2}{r}\left(x-\frac12\right)\right)^\top\right) & \text{ if } r_0 \leq r < r_1, \\
  (1, (0,0)^\top) & \text{ otherwise},
\end{cases}
\]
where $r_0=0.1,r_1=0.115$, and $f=\frac{r_1-r}{r_1-r_0}$. We use periodic boundaries in both $x-$ and $y-$ directions. A $\polP_3$ solution by the EV method at the final time $\widehat t = 0.15$ with 90000 $\polP_3$ nodes is shown in Figure~\ref{fig:rotor}. The computed solution agrees with the existing results, e.g., \cite{Balsara_et_al_1999,Tricco_2016}.}

\begin{figure}[h!]
     \centering
     \begin{subfigure}{0.49\textwidth}
         \centering
         \includegraphics[width=\textwidth]{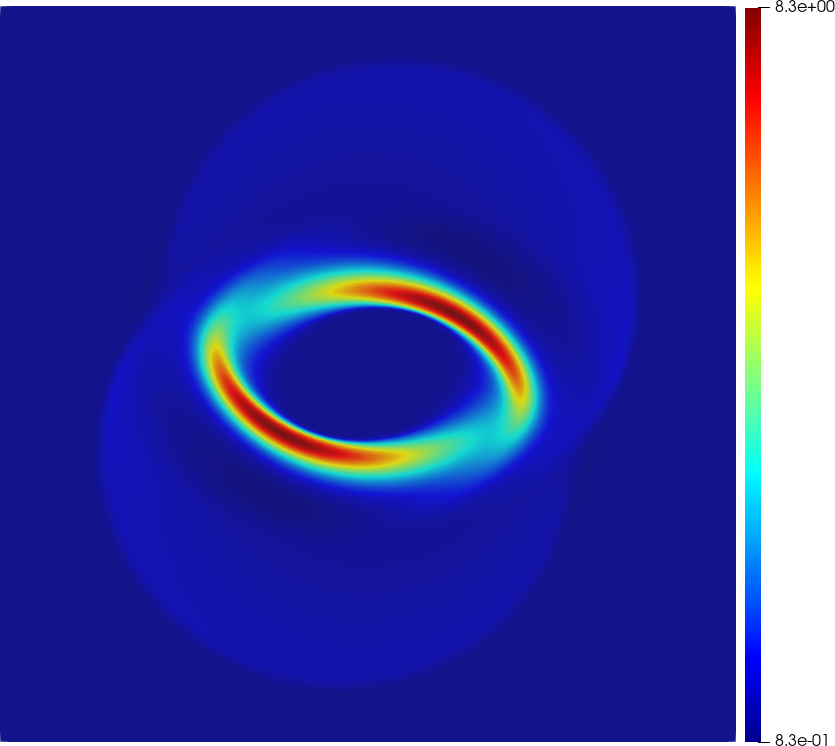}
         \caption{Density $\rho_h$}
     \end{subfigure}
     \hfill
     \begin{subfigure}{0.49\textwidth}
         \centering
         \includegraphics[width=\textwidth]{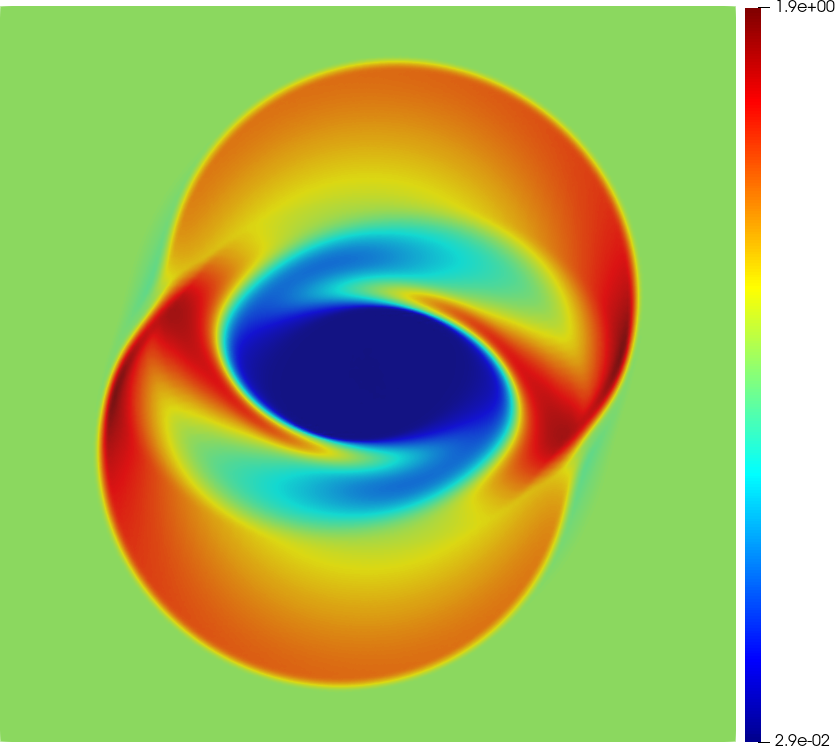}
         \caption{Thermodynamic pressure $p_h$}
     \end{subfigure}
     \hfill
     \begin{subfigure}{0.49\textwidth}
         \centering
         \includegraphics[width=\textwidth]{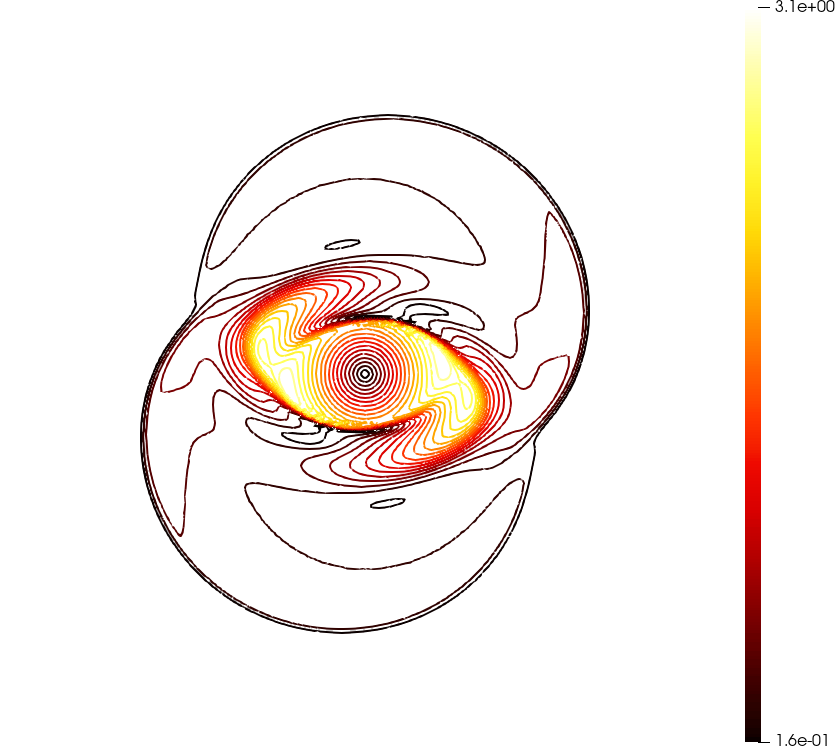}
         \caption{Contour plot of Mach number}
     \end{subfigure}
     \hfill
     \begin{subfigure}{0.49\textwidth}
         \centering
         \includegraphics[width=\textwidth]{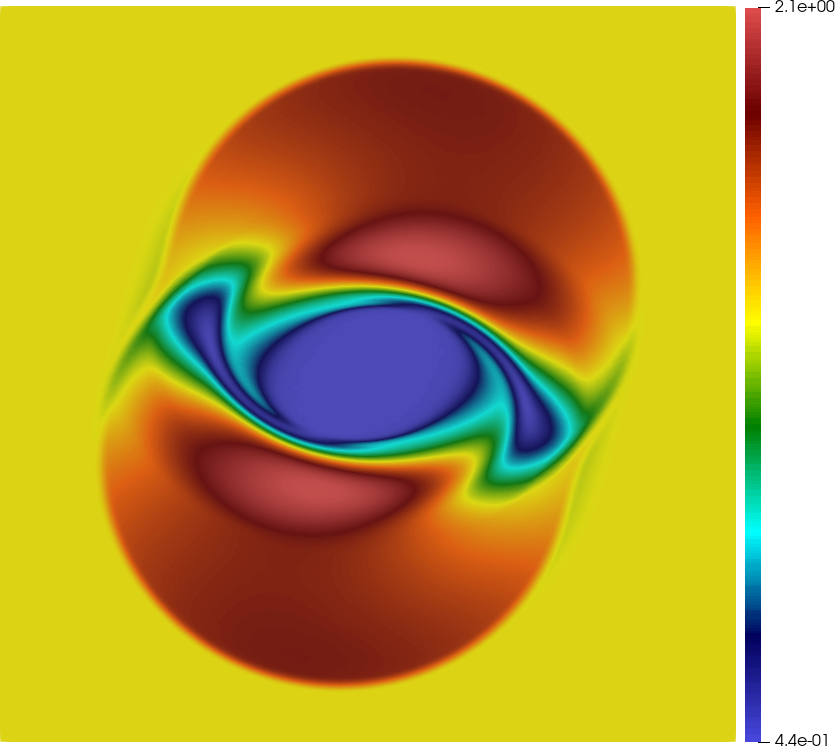}
         \caption{Magnetic pressure $\frac{1}{2}\bB_h^2$}
     \end{subfigure}
     \caption{Entropy viscosity solution to the Rotor problem at time $\widehat t = 0.15$, 90000 $\polP_3$ nodes}
     \label{fig:rotor}
\end{figure}

\subsubsection{\cpurple{MHD Blast problem, \citep{Balsara_et_al_1999}}}\label{sec:blast}

\cpurple{We consider the same setup as \cite{Wu_2018b}. See \cite{Wu_2018b} for a more detailed description of this problem.

  The computational domain is $\Omega=[-0.5,0.5]$. Initially, the density is uniform $\rho_0=1$; the magnetic field is uniform $\bB_0 = \left(\frac{100}{\sqrt{4\pi}},0\right)$; and the velocity $\bu_0$ is uniformly zero. For $\bx=(x,y)\in\Omega$, we define a radius $r:=\sqrt{x^2+y^2}$. There is a sharp jump in the initial pressure,
  \[
  p_0 = \begin{cases}
    1000 & \text{ if } r < 0.1, \\
    0.1 & \text{ otherwise}.
    \end{cases}
  \]
  We use periodic boundaries in both $x-$ and $y-$ directions. We show a numerical solution to this problem in Figure~\ref{fig:blast}. The computed solution agrees with the existing results, e.g., \cite{Balsara_et_al_1999,Wu_2018b}.}

\begin{figure}[h!]
     \centering
     \begin{subfigure}{0.49\textwidth}
         \centering
         \includegraphics[width=\textwidth]{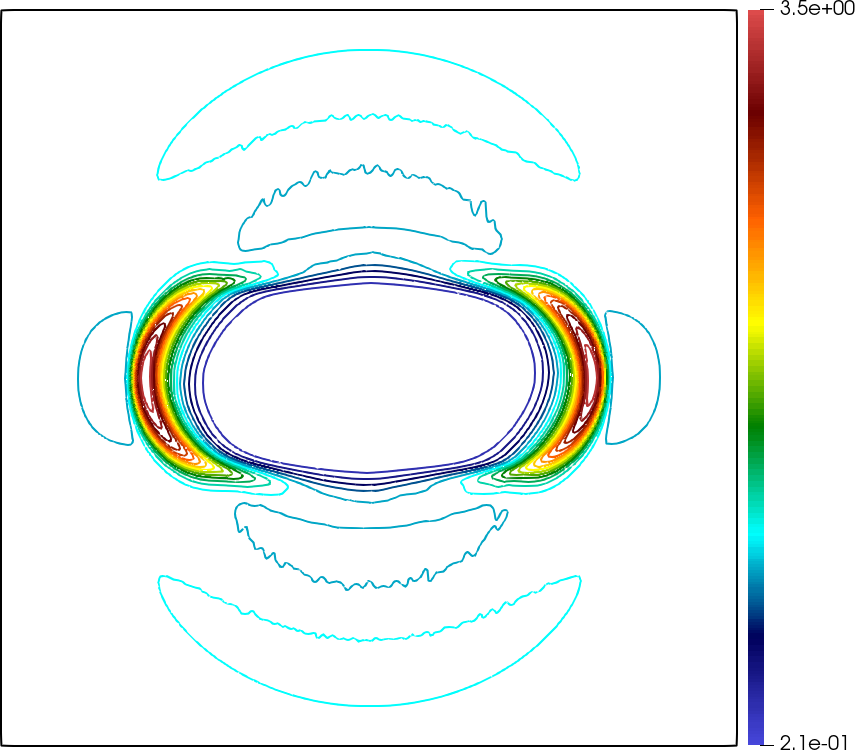}
         \caption{Density $\rho_h$}
     \end{subfigure}
     \hfill
     \begin{subfigure}{0.49\textwidth}
         \centering
         \includegraphics[width=\textwidth]{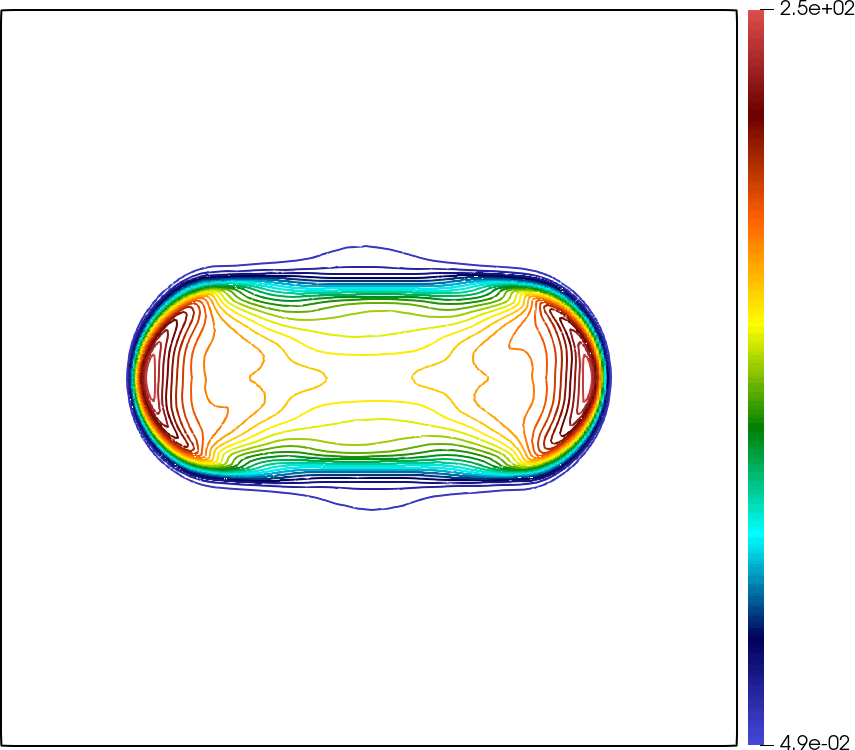}
         \caption{Thermodynamic pressure $p_h$}
     \end{subfigure}
     \hfill
     \begin{subfigure}{0.49\textwidth}
         \centering
         \includegraphics[width=\textwidth]{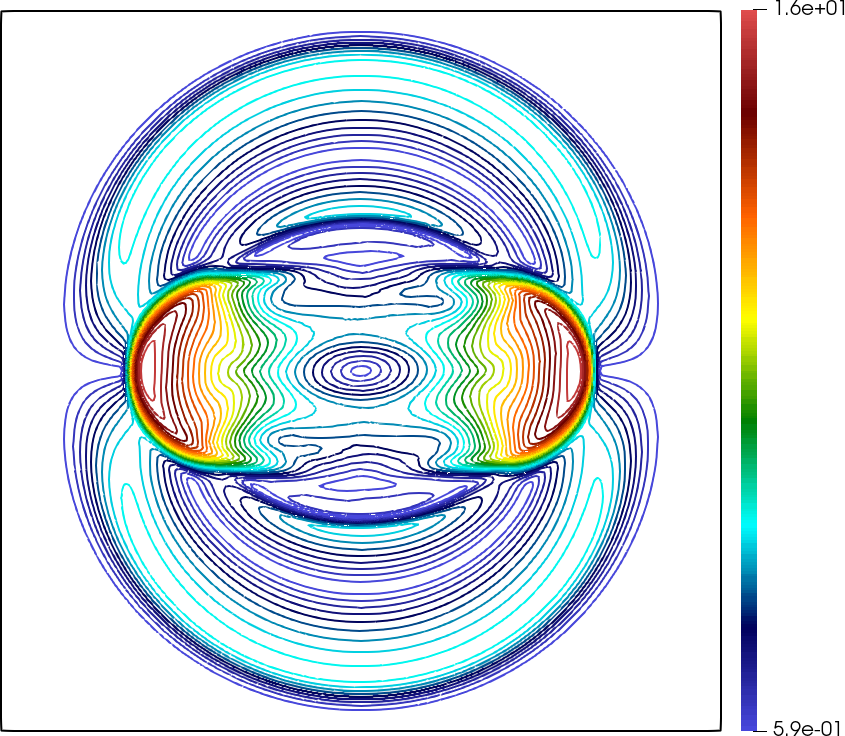}
         \caption{Velocity $|\bu_h|$}
     \end{subfigure}
     \hfill
     \begin{subfigure}{0.49\textwidth}
         \centering
         \includegraphics[width=\textwidth]{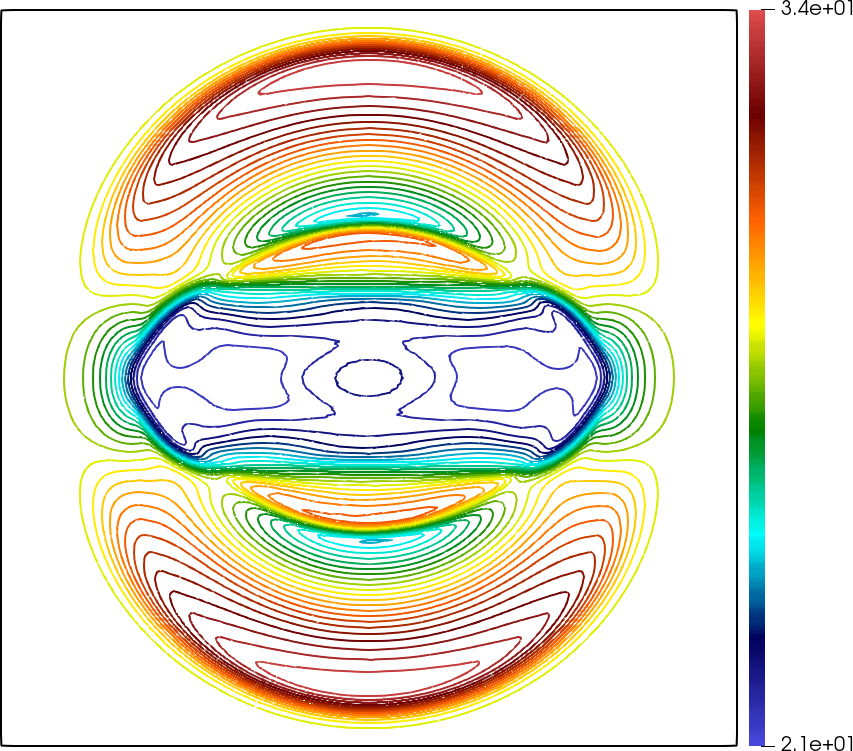}
         \caption{Magnetic pressure $\frac{1}{2}\bB_h^2$}
     \end{subfigure}
     \caption{Contour plots of the entropy viscosity solution to the MHD Blast wave problem at time $\widehat t = 0.01$, 90000 $\polP_3$ nodes}
     \label{fig:blast}
\end{figure}

\section{Conclusion}\label{Sec:conclusion}
The purpose of this paper has been to investigate the continuous and numerical properties of the monolithic parabolic regularization to the ideal MHD equations. Despite having no physical motivation, this regularization has been shown to be compatible with all the generalized entropies, fulfill minimum entropy principles, and positivity of pressure and density. As demonstrated in the CG context, the monolithic parabolic regularization also holds attractive numerical properties which can be related to the continuous analysis.

A known shortcoming with this regularization is that it is not Galilean and rotational invariant. This means that the regularization changes with a shift or rotation of the reference frame. However, since the viscosity coefficient to scale the regularization is artificial and vanishing, it is unclear how this violation would affect the numerical solutions. During the investigation in this paper, we have also made attempts to demonstrate the downsides of the monolithic flux but so far we have not witnessed any significant pitfalls.

In future works, we may investigate Galilean invariant viscous regularizations to the ideal MHD equations, or add control of nonzero divergence in the regularization. Allowing small violations of the divergence is numerically important, and has been done by several other methods such as \cite{Winters_2017,Wu_2018b}.


\newpage
\begin{appendices}
\section{Strict convexity of $-s$}\label{appendix:convexity_implication}
We prove that the strict convexity of $-s$ with respect to $\rho^{-1}$ and $e$ implies \eqref{eq:convex_entropy_assumption}. The properties \eqref{eq:convex_entropy_assumption} are borrowed from the Euler equations, see \citep{Guermond_Popov_2014} but for completeness, we want to give a short proof to demonstrate that many entropy properties still hold in the presence of electromagnetism.

The second inequality $s_{ee} < 0$ is rather obvious. We have $\p_{\rho^{-1}}s = -\rho^2s_\rho$. Since we know that $\p_{\rho^{-1}}(\p_{\rho^{-1}}s) < 0$, this leads to $\p_\rho(\rho^2s_\rho) < 0$. The Hessian matrix of $-s$ with respect to $\rho^{-1}$ and $e$ is also positive definite. Hence, the determinant of this matrix is positive, which says $\rho^2\p_\rho(\rho^2s_\rho)s_{ee}-\rho^4s_{\rho e}^2 > 0$. This inequality is equivalent to the third property in \eqref{eq:convex_entropy_assumption}.
\section{Proof of Theorem \ref{theorem:convex_entropy}}\label{appendix:convex_entropy}
We include a proof of Theorem \ref{theorem:convex_entropy} since this result, to the best of our knowledge, has not existed in the literature. Although the two inequalities in \eqref{eq:convex_entropy_inequalities} have been proved for the compressible Euler equation \citep{Harten_1998}, it is certainly nontrivial to show that they also hold for the MHD equations. In this appendix, we follow the same procedure of \cite{Harten_1998} to show \eqref{eq:convex_entropy_inequalities} for ideal MHD.
\begin{proof} Recall that our solution vector is $\bU := (\rho, \bbm, E, \bB)^\top$. An entropy $S =-\rho f(s)$ is said to be strictly convex if the Hessian matrix $S_{\bU\bU}$ is positive definite. The minus sign in $S$ differs from \cite{Harten_1998} because the entropy $s$ in the thermodynamic identity \eqref{eq:thermodynamic_identity} is defined with opposite sign to $s$ in \citep{Harten_1998}.

We express $\bu$ and $e$ under the components of $\bU$
\[
\bu = \frac{\bbm}{\rho}, \quad e = \frac{E}{\rho}-\frac{\bbm^2}{2\rho^2}-\frac{\bB^2}{2\rho}.
\]
We have
\[
\rho_{\bU} = (1, \bzero^\top, 0, \bzero^\top)^\top, \quad
\rho_{\bU\bU} = \polO,
\]
where $\bzero$ is the zero column vector and $\polO$ is the zero square matrix of appropriate sizes. Because $(\rho_{\bU\bU})_{ij} = 0, \;\forall i,j$, by differentiation rules,
\begin{align*}
  \left(S_{\bU\bU}\right)_{ij} & = \frac{\p^2 (-\rho f(s))}{\p \bU_i\p \bU_j}\\
  & =  \frac{\p}{\p \bU_i}\left(-f(s)\p_{\bU_j}\rho-\rho f'(s) \p_{\bU_j}s\right) \\
  & = -f'(s)\left(\p_{\bU_i}\rho\p_{\bU_j}s+\p_{\bU_j}\rho\p_{\bU_i}s+\rho\frac{\p^{2}s}{\p_{\bU_i}\p_{\bU_j}}\right)-\rho f''(s)\p_{\bU_i}s\p_{\bU_j}s.
\end{align*}
Therefore, in matrix form, $S_{\bU\bU}$ can be written as
\[
S_{\bU\bU} = -f'(s)\bH - f''(s)\rho s_\bU s_\bU^\top,
\]
where $\bH = \rho_{\bU}s_\bU^\top+s_{\bU}\rho_\bU^\top+\rho s_{\bU\bU}$. Using the chain rule \eqref{eq:s_chain_rule}, we can write $s_{\bU}$ as
\[
s_{\bU}=s_\rho\rho_\bU+s_ee_\bU.
\]
Consequently, we have
\[
\bH = (\rho s_{\rho\rho}+2s_\rho)\rho_\bU\rho_\bU^\top+\rho s_{e\rho}(\rho_\bU e_\bU^\top+e_\bU\rho_\bU^\top)+\rho s_{ee}e_\bU e_\bU^\top-s_e\bC,
\]
where $\bC = -(\rho e_{\bU\bU}+\rho_{\bU}e_{\bU}^\top+e_{\bU}\rho_{\bU}^\top)$. We now compute $e_\bU$, $e_{\bU\bU}$, and $\bC$ to determine $\bH$ and $s_\bU$. Straightforward calculation gives
\[
e_{\bU} = \begin{pmatrix}
  -\frac{E}{\rho^2}+\frac{\bbm^2}{\rho^3}+\frac{\bB^2}{2\rho^2}\\
  -\frac{\bbm}{\rho^2}\\
  \frac{1}{\rho}\\
  -\frac{\bB}{\rho}
\end{pmatrix}
= \frac{1}{\rho}
\begin{pmatrix}
  -\frac{E}{\rho}+\frac{\bbm^2}{\rho^2}+\frac{\bB^2}{2\rho}\\
  -\frac{\bbm}{\rho}\\
  1\\
  -\bB
\end{pmatrix}
= \frac{1}{\rho}
\begin{pmatrix}
  \frac{1}{2}\bu^2-e\\
  -\bu\\
  1\\
  -\bB
\end{pmatrix},
\]
\[
e_{\bU\bU} = \begin{pmatrix}
\frac{2e}{\rho^3}-\frac{3\bbm^2}{\rho^4}-\frac{\bB^2}{\rho^3} & \frac{2\bbm^\top}{\rho^3} & -\frac{1}{\rho^2} & \frac{\bB^\top}{\rho^2} \\
\frac{2\bbm}{\rho^3} & -\frac{1}{\rho^2}\polI & \bzero & \polO \\
-\frac{1}{\rho^2} & \bzero^\top & 0 & \bzero^\top \\
\frac{\bB}{\rho^2} & \polO & \bzero & -\frac{1}{\rho}\polI
\end{pmatrix},
\]
and
\[
\bC = \frac{1}{\rho} \begin{pmatrix}
  \bu^2 & -\bu^\top & 0 & \bzero^\top \\
  -\bu & \polI & \bzero & \polO \\
  0 & \bzero^\top & 0 & \bzero^\top \\
  \bzero & \polO & \bzero & \rho\polI
\end{pmatrix}.
\]
The key to this proof is to introduce the following invertible matrix
\[
\bP := \begin{pmatrix}
  1 & \bu^\top & \frac{1}{2} \bu^2 + e & \bzero^\top \\
  \bzero & \rho\polI & \rho\bu & \polO \\
  0 & \bzero^\top & \rho & \bzero^\top \\
  \bzero & \polO & \bB & \polI
\end{pmatrix}.
\]
Applying a left multiplication with $\bP$ and a right multiplication with $\bP^\top$ simplifies the structure of $S_{\bU\bU}$. We have
\[
\bP\rho_{\bU} = \begin{pmatrix}
  1 \\
  \bzero \\
  0 \\
  \bzero
\end{pmatrix},
\quad
\bP e_{\bU} = \begin{pmatrix}
  0 \\
  \bzero \\
  1 \\
  \bzero
\end{pmatrix},
\quad
\bP\bC\bP^\top = \begin{pmatrix}
  0 & \bzero^\top & 0 & \bzero^\top \\
  \bzero & \rho\polI & \bzero & \polO \\
  0 & \bzero^\top & 0 & \bzero^\top \\
  \bzero & \polO & \bzero & \polI
\end{pmatrix},
\]
\[
\bP\bH\bP^\top = \begin{pmatrix}
  2s_\rho+\rho s_{\rho\rho} & \bzero^\top & \rho s_{e\rho} & \bzero^\top \\
  \bzero & -\rho s_e\polI & \bzero & \polO \\
  \rho s_{e\rho} & \bzero^T & \rho s_{ee} & \bzero^\top \\
  \bzero & \polO & \bzero & -s_e\polI
\end{pmatrix}.
\]
Combining all the above derivations gives
\begin{adjustwidth}{0cm}{-0.6cm}
\[
\bP S_{\bU\bU}\bP^\top = -f'(s) \begin{pmatrix}
  2s_\rho+\rho s_{\rho\rho} & \bzero^\top & \rho s_{e\rho} & \bzero^\top \\
  \bzero & -\rho s_e\polI & \bzero & \polO \\
  \rho s_{e\rho} & \bzero^\top & \rho s_{ee} & \bzero^\top \\
  \bzero & \polO & \bzero & -s_e\polI
\end{pmatrix}
-
f''(s)\rho
\begin{pmatrix}
  s_\rho \\
  \bzero \\
  s_{e} \\
  \bzero
\end{pmatrix}
\begin{pmatrix}
  s_\rho &
  \bzero^\top &
  s_{e} &
  \bzero^\top
\end{pmatrix}.
\]
\end{adjustwidth}
This reveals that two eigenvalues of $\bP S_{\bU\bU}\bP^\top$ are $f'(s)\rho s_e$ and $f'(s)s_e$. Since $\rho >0, s_e > 0$, and the invertability of $\bP$, we conclude that $f'(s) > 0$ is a necessary condition for strict positivity of $S_{\bU\bU}$. Disregarding the two detached dimensions corresponding to the two known eigenvalues, the remaining dimensions require that the following matrix $\bM$ of size $2\times 2$ must be positive definite
\[
\bM := -f'(s) \begin{pmatrix}
  2s_\rho+\rho s_{\rho\rho} & \rho s_{e\rho} \\
  \rho s_{e\rho} & \rho s_{ee} \\
\end{pmatrix}
-
f''(s)\rho
\begin{pmatrix}
  s_\rho \\
  s_{e}
\end{pmatrix}
\begin{pmatrix}
  s_\rho &
  s_{e}
\end{pmatrix}.
\]
We then can proceed as in \cite[from (3.13)]{Harten_1998} to see that the positive definiteness of $\bM$ is equivalent to $\frac{f'(s)}{c_p} - f''(s) > 0$. This is because the same thermodynamic identity \eqref{eq:thermodynamic_identity} holds for the MHD equations as it holds for the Euler equations.
\end{proof}
\end{appendices}

\section*{Acknowledgement}
Some computations were performed on UPPMAX provided by the Swedish National Infrastructure for Computing (SNIC) under project number SNIC 2021/22-233.

\bibliographystyle{abbrvnat}
\bibliography{ref}

\begin{thebibliography}{40}
\providecommand{\natexlab}[1]{#1}
\providecommand{\url}[1]{\texttt{#1}}
\expandafter\ifx\csname urlstyle\endcsname\relax
  \providecommand{\doi}[1]{doi: #1}\else
  \providecommand{\doi}{doi: \begingroup \urlstyle{rm}\Url}\fi

\bibitem[Balsara and Spicer(1999)]{Balsara_et_al_1999}
D.~S. Balsara and D.~S. Spicer.
\newblock A staggered mesh algorithm using high order {G}odunov fluxes to
  ensure solenoidal magnetic fields in magnetohydrodynamic simulations.
\newblock \emph{J. Comput. Phys.}, 149\penalty0 (2):\penalty0 270--292, 1999.
\newblock ISSN 0021-9991.
\newblock \doi{10.1006/jcph.1998.6153}.
\newblock URL \url{https://doi.org/10.1006/jcph.1998.6153}.

\bibitem[Bohm et~al.(2018)Bohm, Winters, Gassner, Derigs, Hindenlang, and
  Saur]{Bohm_2018}
M.~Bohm, A.~R. Winters, G.~J. Gassner, D.~Derigs, F.~Hindenlang, and J.~Saur.
\newblock An entropy stable nodal discontinuous {G}alerkin method for the
  resistive {MHD} equations. {P}art {I}: {T}heory and numerical verification.
\newblock \emph{J. Comput. Phys.}, 422:\penalty0 108076, 35, 2018.
\newblock ISSN 0021-9991.
\newblock \doi{10.1016/j.jcp.2018.06.027}.
\newblock URL \url{https://doi.org/10.1016/j.jcp.2018.06.027}.

\bibitem[Brackbill and Barnes(1980)]{Brackbill_Barnes_1980}
J.~U. Brackbill and D.~C. Barnes.
\newblock The effect of nonzero {$\nabla \cdot {\bf B}$} on the numerical
  solution of the magnetohydrodynamic equations.
\newblock \emph{J. Comput. Phys.}, 35\penalty0 (3):\penalty0 426--430, 1980.
\newblock ISSN 0021-9991.
\newblock \doi{10.1016/0021-9991(80)90079-0}.
\newblock URL \url{https://doi.org/10.1016/0021-9991(80)90079-0}.

\bibitem[Brio and Wu(1988)]{Brio_Wu_1988}
M.~Brio and C.~C. Wu.
\newblock An upwind differencing scheme for the equations of ideal
  magnetohydrodynamics.
\newblock \emph{J. Comput. Phys.}, 75\penalty0 (2):\penalty0 400--422, 1988.
\newblock ISSN 0021-9991.
\newblock \doi{10.1016/0021-9991(88)90120-9}.
\newblock URL \url{https://doi.org/10.1016/0021-9991(88)90120-9}.

\bibitem[Christlieb et~al.(2015)Christlieb, Liu, Tang, and Xu]{Christlieb_2015}
A.~J. Christlieb, Y.~Liu, Q.~Tang, and Z.~Xu.
\newblock Positivity-preserving finite difference weighted {ENO} schemes with
  constrained transport for ideal magnetohydrodynamic equations.
\newblock \emph{SIAM J. Sci. Comput.}, 37\penalty0 (4):\penalty0 A1825--A1845,
  2015.
\newblock ISSN 1064-8275.
\newblock \doi{10.1137/140971208}.
\newblock URL \url{https://doi.org/10.1137/140971208}.

\bibitem[Dao and Nazarov(2021)]{Dao_2021}
T.~A. Dao and M.~Nazarov.
\newblock A high-order residual-based viscosity finite element method for the
  ideal mhd equations, 2021.
\newblock URL \url{https://arxiv.org/abs/2112.08885}.

\bibitem[Dedner et~al.(2002)Dedner, Kemm, Kr\"{o}ner, Munz, Schnitzer, and
  Wesenberg]{Dedner_et_al_2002}
A.~Dedner, F.~Kemm, D.~Kr\"{o}ner, C.-D. Munz, T.~Schnitzer, and M.~Wesenberg.
\newblock Hyperbolic divergence cleaning for the {MHD} equations.
\newblock \emph{J. Comput. Phys.}, 175\penalty0 (2):\penalty0 645--673, 2002.
\newblock ISSN 0021-9991.
\newblock \doi{10.1006/jcph.2001.6961}.
\newblock URL \url{https://doi.org/10.1006/jcph.2001.6961}.

\bibitem[Freist\"{u}hler and Szmolyan(1995)]{Freistuhler_1995}
H.~Freist\"{u}hler and P.~Szmolyan.
\newblock Existence and bifurcation of viscous profiles for all intermediate
  magnetohydrodynamic shock waves.
\newblock \emph{SIAM J. Math. Anal.}, 26\penalty0 (1):\penalty0 112--128, 1995.
\newblock ISSN 0036-1410.
\newblock \doi{10.1137/S0036141093247366}.
\newblock URL \url{https://doi.org/10.1137/S0036141093247366}.

\bibitem[Godunov(1959)]{Godunov_1959}
S.~K. Godunov.
\newblock A difference method for numerical calculation of discontinuous
  solutions of the equations of hydrodynamics.
\newblock \emph{Mat. Sb. (N.S.)}, 47 (89):\penalty0 271--306, 1959.

\bibitem[Guermond and Pasquetti(2008)]{Guermond_2008}
J.-L. Guermond and R.~Pasquetti.
\newblock Entropy-based nonlinear viscosity for {F}ourier approximations of
  conservation laws.
\newblock \emph{C. R. Math. Acad. Sci. Paris}, 346\penalty0 (13-14):\penalty0
  801--806, 2008.
\newblock ISSN 1631-073X.
\newblock \doi{10.1016/j.crma.2008.05.013}.
\newblock URL \url{https://doi.org/10.1016/j.crma.2008.05.013}.

\bibitem[Guermond and Popov(2014)]{Guermond_Popov_2014}
J.-L. Guermond and B.~Popov.
\newblock Viscous regularization of the {E}uler equations and entropy
  principles.
\newblock \emph{SIAM J. Appl. Math.}, 74\penalty0 (2):\penalty0 284--305, 2014.
\newblock ISSN 0036-1399.
\newblock \doi{10.1137/120903312}.
\newblock URL \url{https://doi.org/10.1137/120903312}.

\bibitem[Guermond and Popov(2016)]{Guermond_Popov_2016}
J.-L. Guermond and B.~Popov.
\newblock Invariant domains and first-order continuous finite element
  approximation for hyperbolic systems.
\newblock \emph{SIAM J. Numer. Anal.}, 54\penalty0 (4):\penalty0 2466--2489,
  2016.
\newblock ISSN 0036-1429.
\newblock \doi{10.1137/16M1074291}.
\newblock URL \url{https://doi.org/10.1137/16M1074291}.

\bibitem[Guermond et~al.(2011)Guermond, Pasquetti, and Popov]{Guermond_2011}
J.-L. Guermond, R.~Pasquetti, and B.~Popov.
\newblock Entropy viscosity method for nonlinear conservation laws.
\newblock \emph{J. Comput. Phys.}, 230\penalty0 (11):\penalty0 4248--4267,
  2011.
\newblock ISSN 0021-9991.
\newblock \doi{10.1016/j.jcp.2010.11.043}.
\newblock URL \url{https://doi.org/10.1016/j.jcp.2010.11.043}.

\bibitem[Guermond et~al.(2017)Guermond, Popov, and
  Yang]{Guermond_Popov_Yang_2017}
J.-L. Guermond, B.~Popov, and Y.~Yang.
\newblock The effect of the consistent mass matrix on the maximum-principle for
  scalar conservation equations.
\newblock \emph{J. Sci. Comput.}, 70\penalty0 (3):\penalty0 1358--1366, 2017.
\newblock ISSN 0885-7474.
\newblock \doi{10.1007/s10915-016-0285-7}.
\newblock URL \url{https://doi.org/10.1007/s10915-016-0285-7}.

\bibitem[Guillet et~al.(2019)Guillet, Pakmor, Springel, Chandrashekar, and
  Klingenberg]{Guillet_2019}
T.~Guillet, R.~Pakmor, V.~Springel, P.~Chandrashekar, and C.~Klingenberg.
\newblock High-order magnetohydrodynamics for astrophysics with an adaptive
  mesh refinement discontinuous galerkin scheme.
\newblock \emph{Mon. Notices Royal Astron. Soc.}, 485\penalty0 (3):\penalty0
  4209--4246, 2019.
\newblock ISSN 1365-2966.

\bibitem[Harten(1983)]{Harten_1983}
A.~Harten.
\newblock On the symmetric form of systems of conservation laws with entropy.
\newblock \emph{J. Comput. Phys.}, 49\penalty0 (1):\penalty0 151--164, 1983.
\newblock ISSN 0021-9991.
\newblock \doi{10.1016/0021-9991(83)90118-3}.
\newblock URL \url{https://doi.org/10.1016/0021-9991(83)90118-3}.

\bibitem[Harten et~al.(1998)Harten, Lax, Levermore, and Morokoff]{Harten_1998}
A.~Harten, P.~D. Lax, C.~D. Levermore, and W.~J. Morokoff.
\newblock Convex entropies and hyperbolicity for general {E}uler equations.
\newblock \emph{SIAM J. Numer. Anal.}, 35\penalty0 (6):\penalty0 2117--2127,
  1998.
\newblock ISSN 0036-1429.
\newblock \doi{10.1137/S0036142997316700}.
\newblock URL \url{https://doi.org/10.1137/S0036142997316700}.

\bibitem[Hughes et~al.(2010)Hughes, Scovazzi, and Tezduyar]{Hughes_et_al_2010}
T.~J.~R. Hughes, G.~Scovazzi, and T.~E. Tezduyar.
\newblock Stabilized methods for compressible flows.
\newblock \emph{J. Sci. Comput.}, 43\penalty0 (3):\penalty0 343--368, 2010.
\newblock ISSN 0885-7474.
\newblock \doi{10.1007/s10915-008-9233-5}.
\newblock URL \url{https://doi.org/10.1007/s10915-008-9233-5}.

\bibitem[Janhunen(2000)]{Janhunen_2000}
P.~Janhunen.
\newblock A positive conservative method for magnetohydrodynamics based on
  {HLL} and {R}oe methods.
\newblock \emph{J. Comput. Phys.}, 160\penalty0 (2):\penalty0 649--661, 2000.
\newblock ISSN 0021-9991.
\newblock \doi{10.1006/jcph.2000.6479}.
\newblock URL \url{https://doi.org/10.1006/jcph.2000.6479}.

\bibitem[Kuzmin and Klyushnev(2020)]{Kuzmin_2020}
D.~Kuzmin and N.~Klyushnev.
\newblock Limiting and divergence cleaning for continuous finite element
  discretizations of the {MHD} equations.
\newblock \emph{J. Comput. Phys.}, 407:\penalty0 109230, 18, 2020.
\newblock ISSN 0021-9991.
\newblock \doi{10.1016/j.jcp.2020.109230}.
\newblock URL \url{https://doi.org/10.1016/j.jcp.2020.109230}.

\bibitem[Lax(1971)]{Lax_1971}
P.~Lax.
\newblock Shock waves and entropy.
\newblock In \emph{Contributions to nonlinear functional analysis ({P}roc.
  {S}ympos., {M}ath. {R}es. {C}enter, {U}niv. {W}isconsin, {M}adison, {W}is.,
  1971)}, pages 603--634, 1971.

\bibitem[Lax(1954)]{Lax_1954}
P.~D. Lax.
\newblock Weak solutions of nonlinear hyperbolic equations and their numerical
  computation.
\newblock \emph{Comm. Pure Appl. Math.}, 7:\penalty0 159--193, 1954.
\newblock ISSN 0010-3640.
\newblock \doi{10.1002/cpa.3160070112}.
\newblock URL \url{https://doi.org/10.1002/cpa.3160070112}.

\bibitem[Logg et~al.(2012)Logg, Mardal, and Wells]{Logg_2012}
A.~Logg, K.-A. Mardal, and G.~N. Wells, editors.
\newblock \emph{Automated solution of differential equations by the finite
  element method}, volume~84 of \emph{Lecture Notes in Computational Science
  and Engineering}.
\newblock Springer, Heidelberg, 2012.
\newblock ISBN 978-3-642-23098-1; 978-3-642-23099-8.
\newblock \doi{10.1007/978-3-642-23099-8}.
\newblock URL \url{https://doi.org/10.1007/978-3-642-23099-8}.
\newblock The FEniCS book.

\bibitem[Mabuza et~al.(2020)Mabuza, Shadid, Cyr, Pawlowski, and
  Kuzmin]{Mabuza_2020}
S.~Mabuza, J.~N. Shadid, E.~C. Cyr, R.~P. Pawlowski, and D.~Kuzmin.
\newblock A linearity preserving nodal variation limiting algorithm for
  continuous {G}alerkin discretization of ideal {MHD} equations.
\newblock \emph{J. Comput. Phys.}, 410:\penalty0 109390, 28, 2020.
\newblock ISSN 0021-9991.
\newblock \doi{10.1016/j.jcp.2020.109390}.
\newblock URL \url{https://doi.org/10.1016/j.jcp.2020.109390}.

\bibitem[Myong and Roe(1997)]{Myong_1997}
R.~S. Myong and P.~L. Roe.
\newblock Shock waves and rarefaction waves in magnetohydrodynamics. part 2.
  the mhd system.
\newblock \emph{J. Plasma Phys.}, 58\penalty0 (3):\penalty0 485--519, 1997.
\newblock ISSN 0022-3778.
\newblock \doi{10.1006/jcph.1998.6101}.
\newblock URL \url{https://doi.org/10.1006/jcph.1998.6101}.

\bibitem[Nazarov and Larcher(2017)]{Nazarov_Larcher_2017}
M.~Nazarov and A.~Larcher.
\newblock Numerical investigation of a viscous regularization of the {E}uler
  equations by entropy viscosity.
\newblock \emph{Comput. Methods Appl. Mech. Engrg.}, 317:\penalty0 128--152,
  2017.
\newblock ISSN 0045-7825.

\bibitem[Orszag and Tang(1979)]{Orszag_Tang_1979}
S.~A. Orszag and C.-M. Tang.
\newblock Small-scale structure of two-dimensional magnetohydrodynamic
  turbulence.
\newblock \emph{J. Fluid Mech.}, 90\penalty0 (1):\penalty0 129--143, 1979.
\newblock ISSN 0022-1120.
\newblock \doi{10.1017/S002211207900210X}.
\newblock URL \url{https://doi.org/10.1006/10.1017/S002211207900210X}.

\bibitem[Perthame and Shu(1996)]{Perthame_1996}
B.~Perthame and C.-W. Shu.
\newblock On positivity preserving finite volume schemes for {E}uler equations.
\newblock \emph{Numer. Math.}, 73\penalty0 (1):\penalty0 119--130, 1996.
\newblock ISSN 0029-599X.
\newblock \doi{10.1007/s002110050187}.
\newblock URL \url{https://doi.org/10.1007/s002110050187}.

\bibitem[Powell et~al.(1999)Powell, Roe, Linde, Gombosi, and
  De~Zeeuw]{Powell_et_al_1991}
K.~G. Powell, P.~L. Roe, T.~J. Linde, T.~I. Gombosi, and D.~L. De~Zeeuw.
\newblock A solution-adaptive upwind scheme for ideal magnetohydrodynamics.
\newblock \emph{J. Comput. Phys.}, 154\penalty0 (2):\penalty0 284--309, 1999.
\newblock ISSN 0021-9991.
\newblock \doi{10.1006/jcph.1999.6299}.
\newblock URL \url{https://doi.org/10.1006/jcph.1999.6299}.

\bibitem[Ruuth(2006)]{Ruuth_2006}
S.~J. Ruuth.
\newblock Global optimization of explicit strong-stability-preserving
  {R}unge-{K}utta methods.
\newblock \emph{Math. Comp.}, 75\penalty0 (253):\penalty0 183--207, 2006.
\newblock ISSN 0025-5718.
\newblock \doi{10.1090/S0025-5718-05-01772-2}.
\newblock URL \url{https://doi.org/10.1090/S0025-5718-05-01772-2}.

\bibitem[Somov(2012)]{Somov_2012}
B.~V. Somov.
\newblock \emph{Plasma Astrophysics, Part I: Fundamentals and Practice}, volume
  391.
\newblock Springer Science \& Business Media, 2012.

\bibitem[Tadmor(1986)]{Tadmor_1986}
E.~Tadmor.
\newblock A minimum entropy principle in the gas dynamics equations.
\newblock \emph{Appl. Numer. Math.}, 2\penalty0 (3-5):\penalty0 211--219, 1986.
\newblock ISSN 0168-9274.
\newblock \doi{10.1016/0168-9274(86)90029-2}.
\newblock URL \url{https://doi.org/10.1016/0168-9274(86)90029-2}.

\bibitem[Takahashi and Yamada(2014)]{Takahashi_2014}
K.~Takahashi and S.~Yamada.
\newblock Exact riemann solver for ideal magnetohydrodynamics that can handle
  all types of intermediate shocks and switch-on/off waves.
\newblock \emph{Journal of Plasma Physics}, 80\penalty0 (2):\penalty0 255--287,
  2014.

\bibitem[Tang and Xu(2000)]{Tang_2000}
H.-Z. Tang and K.~Xu.
\newblock Positivity-preserving analysis of explicit and implicit
  {L}ax-{F}riedrichs schemes for compressible {E}uler equations.
\newblock \emph{J. Sci. Comput.}, 15\penalty0 (1):\penalty0 19--28, 2000.
\newblock ISSN 0885-7474.
\newblock \doi{10.1023/A:1007593601466}.
\newblock URL \url{https://doi.org/10.1023/A:1007593601466}.

\bibitem[Torrilhon(2002)]{Torrilhon_2002}
M.~Torrilhon.
\newblock Exact solver and uniqueness conditions for riemann problems of ideal
  magnetohydrodynamics.
\newblock Technical report, Zurich: Seminar for Applied Mathematics, ETH, 2002.

\bibitem[T\'{o}th(2000)]{Toth_2000}
G.~T\'{o}th.
\newblock The {$\nabla\cdot B=0$} constraint in shock-capturing
  magnetohydrodynamics codes.
\newblock \emph{J. Comput. Phys.}, 161\penalty0 (2):\penalty0 605--652, 2000.
\newblock ISSN 0021-9991.
\newblock \doi{10.1006/jcph.2000.6519}.
\newblock URL \url{https://doi.org/10.1006/jcph.2000.6519}.

\bibitem[Tricco et~al.(2016)Tricco, Price, and Bate]{Tricco_2016}
T.~S. Tricco, D.~J. Price, and M.~R. Bate.
\newblock Constrained hyperbolic divergence cleaning in smoothed particle
  magnetohydrodynamics with variable cleaning speeds.
\newblock \emph{J. Comput. Phys.}, 322:\penalty0 326--344, 2016.
\newblock ISSN 0021-9991.
\newblock \doi{10.1016/j.jcp.2016.06.053}.
\newblock URL \url{https://doi.org/10.1016/j.jcp.2016.06.053}.

\bibitem[Winters et~al.(2017)Winters, Derigs, Gassner, and Walch]{Winters_2017}
A.~R. Winters, D.~Derigs, G.~J. Gassner, and S.~Walch.
\newblock A uniquely defined entropy stable matrix dissipation operator for
  high {M}ach number ideal {MHD} and compressible {E}uler simulations.
\newblock \emph{J. Comput. Phys.}, 332:\penalty0 274--289, 2017.
\newblock ISSN 0021-9991.
\newblock \doi{10.1016/j.jcp.2016.12.006}.
\newblock URL \url{https://doi.org/10.1016/j.jcp.2016.12.006}.

\bibitem[Wu and Shu(2018)]{Wu_2018b}
K.~Wu and C.-W. Shu.
\newblock A provably positive discontinuous {G}alerkin method for
  multidimensional ideal magnetohydrodynamics.
\newblock \emph{SIAM J. Sci. Comput.}, 40\penalty0 (5):\penalty0 B1302--B1329,
  2018.
\newblock ISSN 1064-8275.
\newblock \doi{10.1137/18M1168042}.
\newblock URL \url{https://doi.org/10.1137/18M1168042}.

\bibitem[Wu and Shu(2021)]{Wu_2021}
K.~Wu and C.-W. Shu.
\newblock Provably physical-constraint-preserving discontinuous {G}alerkin
  methods for multidimensional relativistic {MHD} equations.
\newblock \emph{Numer. Math.}, 148\penalty0 (3):\penalty0 699--741, 2021.
\newblock ISSN 0029-599X.
\newblock \doi{10.1007/s00211-021-01209-4}.
\newblock URL \url{https://doi.org/10.1007/s00211-021-01209-4}.

\end{thebibliography}

\end{document}